\numberwithin{equation}{section}
\theoremstyle{plain}
\newtheorem{thm}{Theorem}[section]
\newtheorem{definition}[thm]{Definition}
\newtheorem{prop}[thm]{Proposition}
\newtheorem{lemma}[thm]{Lemma}
\newtheorem{remark}[thm]{Remark}
\begin{document}
\title[Kramers-Fokker-Planck equation]
{Analytic Gelfand-Shilov smoothing effect of fractional Kramers-Fokker-Planck equation
}

\author[C.-J. Xu \& Y. Xu]
{Chao-Jiang Xu and Yan Xu}

\address{Chao-Jiang Xu and Yan Xu
\newline\indent
School of Mathematics and Key Laboratory of Mathematical MIIT,
\newline\indent
Nanjing University of Aeronautics and Astronautics, Nanjing 210016, China
}
\email{xuchaojiang@nuaa.edu.cn; xuyan1@nuaa.edu.cn}

\date{\today}

\subjclass[2010]{35B65,76P05,82C40}

\keywords{Fractional Kramers-Fokker-Planck equation, Gelfand-Shilov space}

\begin{abstract}
We study the Cauchy problem of the fractional Kramers-Fokker-Planck equation and show that the solution to the Cauchy problem enjoys an analytic Gelfand-Shilov regularizing effect for positive time.
\end{abstract}

\maketitle

\section{Introduction}

\par The Cauchy problem of fractional Kramers-Fokker-Planck equation reads
\begin{equation}\label{1-1}
\left\{
\begin{aligned}
 &\partial_t u+v\cdot\nabla_x u+(1+|v|^{2})^{\frac\gamma2}(1-\Delta_{v})^su+(1+|v|^{2})^{\frac\gamma2+s}u=f(t, x, v),\\
 &u|_{t=0}=u_0,
\end{aligned}
\right.
\end{equation}
where $u=u(t, x, v)\ge0$ is the density distribution function of particles depending on time $t\ge0$, the position variable $x\in\mathbb R^3$ and the velocity variable $v\in\mathbb R^3$ with $\gamma>-3$, $0<s\le1$. 

The Fokker-Planck equation was introduced by Fokker and Planck, to describe the evolution of the density of particles under the Brownian motion.  Recently, the results of the global hypoelliptic estimates were motivated by applications to the kinetic theory of gases. The Kramers-Fokker-Planck operator has been studied in~\cite{H-2, H-3}.

It is a linear model of kinetic equations such as the non-cutoff Boltzmann equation for $0<s<1$ and the Landau equation for $s=1$ with moderate soft potential. For $\gamma=0, s=1$, the above equation is the classical Kramers-Fokker-Planck equation. The $C^{\infty}$ regularity of weak solution for the spatially homogeneous Boltzmann equation without angular cutoff has been discussed in~\cite{D-1, D-2, G-2, V-1}. The Gevrey regularity of the local solution for the initial datum with the same Gevrey regularity has been established in~\cite{U-1}. The Gevrey regularity results for the spatially homogeneous Boltzmann equation have given in~\cite{ B-1, M-2}. Regularity results for the spatially homogeneous Landau equation have been studied in~\cite{L-1, M-1, M-3}.

The regularity problems for the solutions of the kinetic equation have been studied in~\cite{M-5, B-2}. For the spatially inhomogeneous problems,~\cite{A-1, A-2, L-2} concerned the $C^{\infty}$ regularity problems for linear spatially inhomogeneous Boltzmann equation without angular cutoff. The analytic smoothing effect of a solution for the spatially inhomogeneous Landau equation has been given in~\cite{M-4}. And~\cite{C-1, G-1, H-1} also concern the regularity problems for the spatially homogeneous Landau equation.
%In the Cauchy problem of \eqref{1-1}, the operator $(1-\Delta_{v})^s$, where for any $s\in]0,1[$, $(1-\Delta_{v})^s: \mathcal S(\mathbb R^n)\to L^{2}(\mathbb R^n)$ is defined via the Fourier transform
%\begin{equation*}
    %\begin{split}
        %(1-\Delta_{v})^su=\mathcal F^{-1}\left((1+|\xi|^{2})^{s}\mathcal Fu\right), \quad\xi\in\mathbb R^{n},
    %\end{split}
%\end{equation*}
%where $\mathcal F$ and $\mathcal F^{-1}$ are Fourier transform and its inverse,
%$$\mathcal F(\xi)=\int_{\mathbb R^{n}}e^{-ix\cdot\xi}u(x)dx, \quad u\in\mathcal S(\mathbb R^n).$$
\begin{definition}\label{definition1}
    Let $\sigma>0$, the Gevrey spaces $\mathcal G^{\sigma}(\mathbb R^{n})$ is the space of $C^{\infty}$ functions $u$ satisfying for any $k\in\mathbb N$, there exists a constant $C>0$ such that
$$\|(1-\Delta)^{\frac k{2\sigma}}u\|_{L^{2}(\mathbb R^{n})}\le C^{k+1}k!.$$
%equivalently, there exists a constant $c_{0}>0$ such that
%$$e^{c_{0}|\xi|^{\frac1\sigma}}\hat u\in L^{2}(\mathbb R^{n}).$$
Let $\mu, \nu>0$ and $\mu+\nu\ge 1$, the Gelfand-Shilov space $S^{\mu}_{\nu}(\mathbb{R}^n)$ is the space of $C^{\infty}$ functions $u$ satisfying for any $k\in\mathbb N$ there exist the constants $C>0, \tilde C>0$ such that
$$\|(1-\Delta)^{\frac{k}{2\mu}}u\|_{L^{2}(\mathbb R^{n})}\le C^{k+1}k!    \qquad {\rm and}\qquad \left\|(1+|\cdot|^{2})^{\frac{k}{2\nu}}u\right\|_{L^{2}(\mathbb R^{n})}\le\tilde C^{k+1}k!.$$
\end{definition}

In this work, we consider the Cauchy problem \eqref{1-1} with $0<s<1$, $\gamma+2s>0$. Set $\tilde s=\min\{1/2, s\}$, we would show the Cauchy problem of \eqref{1-1} admits a solution that enjoys the Gevrey regularity and decay estimation. The main result reads as follows.

\begin{thm}\label{thm1}
    Let $0<s<1$, $\gamma+2s>0$ and $T>0$. Assume that $u_0\in L^{2}(\mathbb R^6_{x, v})$, $f\in C^{\infty}([0, T]; \mathcal G^{\frac{1}{2\tilde s}}(\mathbb R_{x}^{3}; S^{\frac{1}{2\tilde s}}_{\frac{1}{\gamma/2+s}}(\mathbb R^{3}_{v})))$,  then the Cauchy problem \eqref{1-1} admits a solution in $C^{\infty}(]0, T]; \mathcal G^{\frac{1}{2\tilde s}}(\mathbb R_{x}^{3}; S^{\frac{1}{2\tilde s}}_{\frac{1}{\gamma/2+s}}(\mathbb R^{3}_{v})))$. In fact, we prove that there exists $C>0$ such that for all $0<t\le T$
        $$
        \left\|(1-\Delta_{v}-t^{2}\Delta_{x})^{\tilde sk}u\right\|_{L^{2}(\mathbb R^{6}_{x, v})}+\left\|\langle v\rangle^{(\gamma/2+s)k}u\right\|_{L^{2}(\mathbb R^{6}_{x, v})}\le\frac{ C^{k+1}}{t^{k}}k!, \quad\forall k\in\mathbb N.
             $$
\end{thm}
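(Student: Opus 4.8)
The plan is to combine an $L^2$ solvability argument with a quantitative induction on the order of regularity, in the spirit of the hypoelliptic commutator method of Morimoto--Xu. \textbf{Step 1 (solvability and the base case).} Write $\mathcal L=\partial_t+v\cdot\nabla_x$ and $\mathcal P=(1+|v|^2)^{\gamma/2}(1-\Delta_v)^s+(1+|v|^2)^{\gamma/2+s}$. Since $\mathcal L$ is skew-adjoint on $L^2(\mathbb R^6)$ and $\mathcal P\ge0$, a standard Galerkin approximation produces a solution $u\in C([0,T];L^2(\mathbb R^6))$ of \eqref{1-1} satisfying the energy identity
\[
\tfrac12\tfrac{d}{dt}\|u\|_{L^2}^2+(\mathcal P u,u)_{L^2}=(f,u)_{L^2}.
\]
Because $\gamma+2s>0$, the symbol of $\mathcal P$ is elliptic of anisotropic Shubin type $\langle v\rangle^{\gamma}(\langle\xi_v\rangle^{2s}+\langle v\rangle^{2s})$, so $\mathcal P\gtrsim(1-\Delta_v)^{\tilde s}+\langle v\rangle^{2(\gamma/2+s)}$ up to a bounded operator; this gives the case $k=0$ and, after approximating $u_0$ and $f$ by smooth data and passing to the limit, legitimizes the computations below.

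\textbf{Step 2 (the commutator mechanism).} Put $\mathcal H_t=1-\Delta_v-t^2\Delta_x$. A direct computation gives
\[
[\mathcal L,\mathcal H_t]=2\,\nabla_v\cdot\nabla_x-2t\,\Delta_x=\tfrac2t\bigl(\nabla_v\cdot(t\nabla_x)-t^2\Delta_x\bigr),
\]
whose right-hand side is, up to the factor $t^{-1}$, controlled by $\mathcal H_t$ together with the $v$-gradient furnished by $\mathcal P$. Regularizing $\mathcal H_t^{\tilde s k}$ by the bounded operators $\mathcal H_t^{\tilde s k}(1+\delta\mathcal H_t)^{-N}$ with $N$ large and using the Weyl calculus in the anisotropic metric, one obtains, uniformly in $\delta$ and then in the limit $\delta\to0$, an estimate of the form
\[
\bigl|\bigl([\mathcal L,\mathcal H_t^{\tilde s k}]u,\,\mathcal H_t^{\tilde s k}u\bigr)_{L^2}\bigr|\le \tfrac{Ck}{t}\,\|\mathcal H_t^{\tilde s k}u\|_{L^2}^2+\varepsilon\,(\mathcal P\mathcal H_t^{\tilde s k}u,\mathcal H_t^{\tilde s k}u)_{L^2}+\tfrac{Ck}{\varepsilon t}\,\|\mathcal H_t^{\tilde s(k-1)}u\|_{L^2}^2 .
\]
Comparing the order $2\tilde s\le1$ in $v$ of $\mathcal H_t^{\tilde s}$ with the order $1$ in $x$ and order $0$ in $v$ of $[\mathcal L,\mathcal H_t]$ is precisely what forces the Gevrey exponent $1/(2\tilde s)$ with $\tilde s=\min\{1/2,s\}$. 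The commutator $[\mathcal P,\mathcal H_t^{\tilde s k}]$ is a purely $v$-pseudodifferential commutator (the weights are functions of $v$ only and $\langle D_v\rangle^{2s}$ commutes with the $x$-block of $\mathcal H_t$), hence of one order lower and absorbable by the coercivity of $\mathcal P$ and the inductive hypothesis.

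\textbf{Step 3 (quantitative induction).} We prove by induction on $k$ that there is $C>0$ with
\[
\|\mathcal H_t^{\tilde s k}u(t)\|_{L^2(\mathbb R^6)}+\|\langle v\rangle^{(\gamma/2+s)k}u(t)\|_{L^2(\mathbb R^6)}\le\frac{C^{k+1}}{t^{k}}\,k!\qquad(0<t\le T).
\]
For the first summand, apply $\mathcal H_t^{\tilde s k}$ to \eqref{1-1} and run the energy estimate for $u_k=\mathcal H_t^{\tilde s k}u$: besides $(\partial_t\mathcal H_t^{\tilde s k})u$ and $\mathcal H_t^{\tilde s k}f$ (the latter bounded by $C^{k+1}k!$ by the hypothesis on $f$ and Definition \ref{definition1}), the source terms are the commutators of Step 2, producing the gain factor $k/t$ and terms of lower order in $k$ absorbed via the induction hypothesis. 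For the second summand, $\langle v\rangle^{(\gamma/2+s)k}$ commutes with $v\cdot\nabla_x$, so only the lower-order $v$-only commutator with $(1+|v|^2)^{\gamma/2}(1-\Delta_v)^s$ survives, again controlled by the coercive term and the induction hypothesis. Integrating the resulting differential inequalities from $0$ to $t$ and using $\int_0^t\frac{k}{\tau}\,\frac{C^{k}}{\tau^{k-1}}(k-1)!\,d\tau\le\frac{C^{k}}{t^{k-1}}(k-1)!$ (and analogous Beta-type integrals for the $f$-contribution) closes the induction with the sharp power $t^{-k}$ and a single factorial $k!$.

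\textbf{Step 4 (conclusion and main obstacle).} Fix $t\in\,]0,T]$. Since the symbol of $\mathcal H_t$ dominates both $1+|\xi_v|^2$ and $t^2|\xi_x|^2$, Step 3 yields $\|(1-\Delta_v)^{\tilde s k}u(t)\|_{L^2}\le(C/t)^{k+1}k!$ and $\|(1-\Delta_x)^{\tilde s k}u(t)\|_{L^2}\le(C\,t^{-1-2\tilde s})^{k+1}k!$, while the velocity-weight bound reads $\|\langle v\rangle^{(\gamma/2+s)k}u(t)\|_{L^2}\le(C/t)^{k+1}k!$; by Definition \ref{definition1} this is exactly $u(t)\in\mathcal G^{1/(2\tilde s)}(\mathbb R^3_x;S^{1/(2\tilde s)}_{1/(\gamma/2+s)}(\mathbb R^3_v))$, and a time-continuity argument on the mollified problems upgrades this to membership in $C^\infty(\,]0,T];\mathcal G^{1/(2\tilde s)}(\mathbb R^3_x;S^{1/(2\tilde s)}_{1/(\gamma/2+s)}(\mathbb R^3_v)))$. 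The main difficulty is Step 3: arranging the bookkeeping so that each commutation costs at most the single factor $k/t$ and never loses order, which forces the sharp comparison of Step 2 and requires the anisotropic, non-integer-order symbolic calculus to pass to the limit $\delta\to0$ with constants independent of $k$.
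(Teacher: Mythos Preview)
Your overall strategy is the right one, but Step~3 does not close as written, and the obstruction is exactly the one you flag as ``the main difficulty'' without resolving it. With the multiplier $\mathcal H_t^{\tilde s k}=(1-\Delta_v-t^2\Delta_x)^{\tilde s k}$ the energy method cannot be started at $t=0$: first, $\mathcal H_0^{\tilde s k}u_0=(1-\Delta_v)^{\tilde s k}u_0$ is not in $L^2$ for general $u_0\in L^2(\mathbb R^6)$, so there is no admissible initial value for $\|u_k(0)\|$; second, both your commutator bound and the term $(\partial_t\mathcal H_t^{\tilde s k})u$ produce a coefficient $Ck/t$ in front of $\|u_k\|^2$, and the resulting Gr\"onwall factor $\exp\bigl(\int_0^t Ck/\tau\,d\tau\bigr)$ diverges. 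The displayed inequality you invoke,
\[
\int_0^t\frac{k}{\tau}\,\frac{C^{k}}{\tau^{k-1}}(k-1)!\,d\tau\le\frac{C^{k}}{t^{k-1}}(k-1)!,
\]
is simply false: the integrand behaves like $\tau^{-k}$ near $0$ and the integral diverges for every $k\ge1$. No amount of bookkeeping repairs this; the multiplier itself is wrong.

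What the paper does instead is replace $\mathcal H_t^{\tilde s}$ by the Fourier multiplier
\[
M_{2\tilde s}(t,D_x,D_v)=\int_0^t\langle D_v+\rho D_x\rangle^{2\tilde s}\,d\rho,
\]
which by Lemma~\ref{lemma 2.3} is equivalent to $t\,\mathcal H_t^{\tilde s}$ but has two decisive algebraic advantages: $M_{2\tilde s}(0,\cdot,\cdot)=0$, so $M_{2\tilde s}^k u|_{t=0}=0$ for $k\ge1$ and one may integrate from $0$; and on the Fourier side $(\partial_t-\eta\cdot\nabla_\xi)M_{2\tilde s}=\langle\xi\rangle^{2\tilde s}$, giving the \emph{exact} identity
\[
\bigl[M_{2\tilde s}^k,\ \partial_t+v\cdot\nabla_x\bigr]=-k\,\langle D_v\rangle^{2\tilde s}M_{2\tilde s}^{k-1},
\]
with no $1/t$ factor at all. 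The right-hand side pairs against $M_{2\tilde s}^k u$ as $k\,\|\langle D_v\rangle^{\tilde s}M_{2\tilde s}^{k-1}u\|\,\|\langle D_v\rangle^{\tilde s}M_{2\tilde s}^{k}u\|$, and since $\tilde s\le s$ both factors are absorbed by the time-integrated dissipation $\int_0^t\|\langle D_v\rangle^{s}(\langle v\rangle^{\gamma/2}M_{2\tilde s}^{p}u)\|^2$. The same device is used for the weight: the paper works with $\Phi_{\gamma/2+s}=t\langle v\rangle^{\gamma/2+s}$, so $\Phi^k u|_{t=0}=0$ and $[\partial_t,\Phi^k]=k\langle v\rangle^{\gamma/2+s}\Phi^{k-1}$ is again one order lower in $k$, matched by the coercive term $\|\langle v\rangle^{\gamma/2+s}\Phi^{k-1}u\|^2$. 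Your proposal with the bare $\langle v\rangle^{(\gamma/2+s)k}$ has the same initial-data problem as $\mathcal H_t^{\tilde s k}$. A secondary point: the commutator $[\langle v\rangle^{\gamma},M_{2\tilde s}^k]$ (or $[\mathcal P,\mathcal H_t^{\tilde s k}]$ in your notation) is not ``one order lower'' but expands into a full sum over $0\le p\le k-1$ (Lemmas~\ref{commutator M} and \ref{A-k}), and controlling that sum with constants independent of $k$ is a separate inductive argument.
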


In~\cite{X-1}, we show that the solution for the Cauchy problem of the spatially homogenous fractional Kramers-Fokker-Planck equation belongs to the Gelfand-Shilov space $S^{\frac{1}{2\tilde s}}_{\frac{1}{\gamma/2+s}}(\mathbb R^{3})$ for all $t>0$. However, the equation of \eqref{1-1} is a degenerate parabolic type equation, but in this work, we prove that the regularity of solution of the velocity variables transport to the spatial variables with same order $\frac{1}{2\tilde s}$ for all $t>0$, so that our results is much more strong then the classical hypoelliptic regularity results.

\section{Estimations of Commutator and Interpolation}

In the following, the notation $A\lesssim B$ means there exists a constant $C>0$ such that $A\le C B$, the notation $[T_1, T_2]=T_1T_2-T_2T_1$ is commutator. For simplicity, we denote $\langle D_{v}\rangle=\sqrt{1-\Delta_{v}}$, $\langle v\rangle=\sqrt{1+|v|^2}$, then \eqref{1-1} can be written as
\begin{equation*}
\left\{
\begin{aligned}
 &\partial_t u+v\cdot\nabla_x u+\langle v\rangle^{\gamma}\langle D_{v}\rangle^{2s}u+\langle v\rangle^{\gamma+2s}u=f,\\
 &u|_{t=0}=u_0.
\end{aligned}
\right.
\end{equation*}
For later use, we need the following estimation of commutator for the operator $\langle D_{v}\rangle^{r}$ with $r\in\mathbb R$.
\begin{lemma}\label{lemma2.2}
     Let $m, r\in\mathbb R$, then there exists a constant $C_{1}>0$ such that for all $u\in L^{2}(\mathbb R^{3}_{x}; \mathcal S(\mathbb R^{3}_{v}))$,
     \begin{equation}\label{commutator1}
          \left\|\left[\langle D_{v}\rangle^{r}, \langle v\rangle^{m}\right]u\right\|_{L^{2}(\mathbb R_{x, v}^{6})}\le C_{1}\left\|\langle D_{v}\rangle^{r-1}\left(\langle v\rangle^{m}u\right)\right\|_{L^{2}(\mathbb R^{6}_{x, v})},
      \end{equation}
      with the constant $C_{1}$ depends on $m$ and $r$,
\end{lemma}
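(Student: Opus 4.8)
The plan is to prove \eqref{commutator1} by pseudodifferential calculus in the velocity variable, $x$ playing the role of a parameter. Since $\langle D_{v}\rangle^{r}$ and $\langle v\rangle^{m}$ act only on $v$, it suffices to prove the inequality with every $L^{2}(\mathbb R^{6}_{x,v})$-norm replaced by the corresponding $L^{2}(\mathbb R^{3}_{v})$-norm, with a constant independent of $x$; \eqref{commutator1} then follows by squaring and integrating in $x\in\mathbb R^{3}$. So fix $x$ and take $u\in\mathcal S(\mathbb R^{3}_{v})$. I would regard $\langle D_{v}\rangle^{r}$ as the Kohn--Nirenberg operator with symbol $\langle\eta\rangle^{r}$ and $\langle v\rangle^{m}$ as the operator with symbol $\langle v\rangle^{m}$. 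Because $|\partial_{v}^{\beta}\langle v\rangle^{m}|\lesssim\langle v\rangle^{m-|\beta|}\le\langle v\rangle^{m}$ and $|\partial_{\eta}^{\gamma}\langle\eta\rangle^{r}|\lesssim\langle\eta\rangle^{r-|\gamma|}$, these symbols lie, for \emph{every} real $m$ and $r$, in the H\"ormander classes $S(\langle v\rangle^{m},g)$ and $S(\langle\eta\rangle^{r},g)$ associated with the admissible metric $g=|dv|^{2}+\langle\eta\rangle^{-2}|d\eta|^{2}$ (slowly varying, temperate, and satisfying the uncertainty principle with Planck function $\langle\eta\rangle^{-1}\le1$).

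The key step is the symbol computation for the commutator. Since $\langle v\rangle^{m}$ does not depend on $\eta$ and $\langle\eta\rangle^{r}$ does not depend on $v$, the composition formula shows that $\langle v\rangle^{m}\langle D_{v}\rangle^{r}$ is exactly the operator with symbol $\langle v\rangle^{m}\langle\eta\rangle^{r}$, while $\langle D_{v}\rangle^{r}\langle v\rangle^{m}$ has symbol asymptotic to $\langle\eta\rangle^{r}\langle v\rangle^{m}+\sum_{|\alpha|\ge1}\tfrac{(-i)^{|\alpha|}}{\alpha!}\big(\partial_{\eta}^{\alpha}\langle\eta\rangle^{r}\big)\big(\partial_{v}^{\alpha}\langle v\rangle^{m}\big)$. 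Subtracting, $[\langle D_{v}\rangle^{r},\langle v\rangle^{m}]$ is a pseudodifferential operator with principal symbol $-i\,\nabla_{\eta}\langle\eta\rangle^{r}\cdot\nabla_{v}\langle v\rangle^{m}$, and since the $|\alpha|=j\ge1$ term together with all its derivatives is $O(\langle\eta\rangle^{r-j}\langle v\rangle^{m-j})\subset O(\langle\eta\rangle^{r-1}\langle v\rangle^{m-1})$, its full symbol belongs to $S(\langle\eta\rangle^{r-1}\langle v\rangle^{m-1},g)\subset S(\langle\eta\rangle^{r-1}\langle v\rangle^{m},g)$: the commutator gains one velocity derivative, which is all \eqref{commutator1} asks (it even gains an extra factor $\langle v\rangle^{-1}$, which we shall not need).

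To conclude, I would factor $[\langle D_{v}\rangle^{r},\langle v\rangle^{m}]=R\circ\langle D_{v}\rangle^{r-1}\langle v\rangle^{m}$, where $R:=[\langle D_{v}\rangle^{r},\langle v\rangle^{m}]\,\langle v\rangle^{-m}\langle D_{v}\rangle^{1-r}$. By the composition rules the symbol of $R$ lies in $S(\langle\eta\rangle^{r-1}\langle v\rangle^{m}\cdot\langle v\rangle^{-m}\cdot\langle\eta\rangle^{1-r},g)=S(1,g)$, so by the Calder\'on--Vaillancourt theorem for $g$ the operator $R$ is bounded on $L^{2}(\mathbb R^{3}_{v})$ with norm controlled by finitely many seminorms of the symbols $\langle\eta\rangle^{r}$ and $\langle v\rangle^{m}$, i.e.\ by a constant $C_{1}$ depending only on $m$ and $r$. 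Hence
\begin{equation*}
\big\|[\langle D_{v}\rangle^{r},\langle v\rangle^{m}]u\big\|_{L^{2}(\mathbb R^{3}_{v})}=\big\|R\,\langle D_{v}\rangle^{r-1}(\langle v\rangle^{m}u)\big\|_{L^{2}(\mathbb R^{3}_{v})}\le C_{1}\big\|\langle D_{v}\rangle^{r-1}(\langle v\rangle^{m}u)\big\|_{L^{2}(\mathbb R^{3}_{v})},
\end{equation*}
and squaring and integrating in $x$ gives \eqref{commutator1}.

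The point that needs care is uniformity over \emph{all} real $m,r$, especially the range $r<1$, where $\langle D_{v}\rangle^{r-1}$ is smoothing and one cannot afford to discard derivatives in the estimate; the Weyl--H\"ormander calculus attached to $g$ treats every exponent at once. If one prefers to avoid the general calculus, for $0<r<2$ one may instead insert the subordination formula $\langle D_{v}\rangle^{r}=\tfrac{\sin(\pi r/2)}{\pi}\int_{0}^{\infty}\lambda^{r/2-1}(1-\Delta_{v})(\lambda+1-\Delta_{v})^{-1}\,d\lambda$, which rewrites the commutator as $\tfrac{\sin(\pi r/2)}{\pi}\int_{0}^{\infty}\lambda^{r/2}(\lambda+1-\Delta_{v})^{-1}[-\Delta_{v},\langle v\rangle^{m}](\lambda+1-\Delta_{v})^{-1}\,d\lambda$, then estimate each factor using that $[-\Delta_{v},\langle v\rangle^{m}]$ is a first-order operator with coefficients $O(\langle v\rangle^{m-1})$, $O(\langle v\rangle^{m-2})$ together with $\|\langle D_{v}\rangle^{a}(\lambda+1-\Delta_{v})^{-1}\|_{L^{2}(\mathbb R^{3}_{v})\to L^{2}(\mathbb R^{3}_{v})}\lesssim(1+\lambda)^{a/2-1}$ for $a\in[0,2]$, and finally extend to all $r$ via the recursion $[\langle D_{v}\rangle^{r},\,\cdot\,]=\langle D_{v}\rangle^{r-1}[\langle D_{v}\rangle,\,\cdot\,]+[\langle D_{v}\rangle^{r-1},\,\cdot\,]\langle D_{v}\rangle$.
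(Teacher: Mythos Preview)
Your argument is correct and rests on the same idea as the paper's proof: the commutator $[\langle D_v\rangle^r,\langle v\rangle^m]$ is a pseudodifferential operator of order $r-1$, so that right-multiplying by $\langle v\rangle^{-m}\langle D_v\rangle^{1-r}$ produces an $L^2$-bounded operator, which immediately yields \eqref{commutator1} after integrating in $x$. The factorization you write, $R=[\langle D_v\rangle^r,\langle v\rangle^m]\langle v\rangle^{-m}\langle D_v\rangle^{1-r}$, is exactly the operator the paper calls $-\langle v\rangle^m r_1(v,D_v)\langle D_v\rangle^{1-r}$.

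The difference is in how the order-$(r-1)$ claim is justified. You invoke the Weyl--H\"ormander calculus for the metric $g=|dv|^2+\langle\eta\rangle^{-2}|d\eta|^2$ with $v$-weighted symbol classes $S(\langle v\rangle^a\langle\eta\rangle^b,g)$, appeal to the composition rules, and finish with Calder\'on--Vaillancourt; this is clean and handles all real $m,r$ uniformly, at the price of citing heavier machinery. The paper instead stays inside the unweighted classes $\Psi^{r-1}_{1,0}$: it writes $\langle v\rangle^m\langle D_v\rangle^r\langle v\rangle^{-m}=\langle D_v\rangle^r+\langle v\rangle^m r_1(v,D_v)$ via the Kohn--Nirenberg expansion with $N=1$, and then does an explicit oscillatory-integral estimate (integration by parts with $\langle v'\rangle^{-2l'}\langle D_{\xi'}\rangle^{2l'}$ and $\langle\xi'\rangle^{-2l}\langle D_{v'}\rangle^{2l}$, plus Peetre's inequality) to show that the extra $\langle v\rangle^m$ is absorbed by the decay of the remainder, so that $\langle v\rangle^m r_1\in\Psi^{r-1}_{1,0}$. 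This is more self-contained but longer. One small remark: your parenthetical claim that the full commutator symbol lies in the sharper class $S(\langle\eta\rangle^{r-1}\langle v\rangle^{m-1},g)$ does not follow directly from the general composition theorem (which only gives $S(\langle\eta\rangle^{r-1}\langle v\rangle^{m},g)$); it is true, but would require the kind of hands-on estimate the paper performs. Since you immediately pass to the larger class anyway, this does not affect the proof.
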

\begin{proof}
     Let $\langle v\rangle^{m}u(x, v)=g(x, v)$, then
     $$\left[\langle D_{v}\rangle^{r}, \langle v\rangle^{m}\right]u=\langle D_{v}\rangle^{r}g-\langle v\rangle^{m}\langle D_{v}\rangle^{r}\left(\langle v\rangle^{-m}g\right).$$
     Since $\langle D_{v}\rangle^{r}$ can be viewed as a pseudo-differential operator of symbol $(1+|\xi|^{2})^{\frac{r}{2}}$, it follows that for any $N\in\mathbb N_{+}$,
     \begin{equation*}
         \langle v\rangle^{m}\langle D_{v}\rangle^{r}\langle v\rangle^{-m}=\sum_{0\le|\alpha|<N}\frac{1}{\alpha!}\langle v\rangle^{m}D_{v}^{\alpha}\langle v\rangle^{-m}a^{(\alpha)}(D_{v})+\langle v\rangle^{m}r_{N}(v,D_{v}),
     \end{equation*}
     where $a^{(\alpha)}(\xi)=\partial^{\alpha}_{\xi}\langle\xi\rangle^{r}$, and
     $$r_{N}(v,\xi)=N\sum_{|\alpha|=N}\int_{0}^{1}\frac{(1-\theta)^{N-1}}{\alpha!}r_{\theta, \alpha}(v, \xi)d\theta,$$
     with $r_{\theta, \alpha}(v, \xi)$ is the oscillating integral, defined via
     $$r_{\theta, \alpha}(v, \xi)=Os-\iint e^{-iv'\xi'}D^{\alpha}\langle v+v'\rangle^{-m}a^{(\alpha)}(\xi+\theta\xi')\frac{dv'd\xi'}{(2\pi)^{3}}.$$
     Using the identity
     \begin{equation}\label{identity}
         e^{-iv'\xi'}=\langle v'\rangle^{-2l '}\langle D_{\xi'}\rangle^{2l'}e^{-iv'\xi'}=\langle\xi'\rangle^{-2l}\langle D_{v'}\rangle^{2l}e^{-iv'\xi'},
     \end{equation}
     and the fact $2l'>|m|+N+|\beta|+3$, $2l>r+N+|\beta'|+3$, then from integration by parts and using the Leibniz formula, we can deduce that
     \begin{equation*}
        \begin{split}
             &D^{\beta}_{v}\partial^{\beta'}_{\xi}(\langle v\rangle^{m}r_{\theta,\alpha}(v,\xi))%&=Os-\iint e^{-iv'\eta}m^{(\alpha+\beta')}(\xi+\theta\eta)D_{v}^{\alpha+\beta}\langle v+v'\rangle^{\gamma/2}\frac{dv'd\eta}{(2\pi)^{3}}\\
             %&=\int\bigg(\int e^{-iv'\eta}\langle v'\rangle^{-2l'}(1-\Delta_{\eta})^{l'}\big(\langle\eta\rangle^{-2l}\\
             %&\quad\times(1-\Delta_{v'})^{l}m^{(\alpha+\beta')}(\xi+\theta\eta)D_{v}^{\alpha+\beta}\langle v+v'\rangle^{\gamma/2}\big)\frac{d\eta}{(2\pi)^{3}}\bigg)d v '\\
             %&=\sum_{\beta_{1}+\beta_{2}=\beta}C_{\beta}^{\beta_{1}}D^{\beta_{1}}_{v}\langle v\rangle^{-m}\int(1-\Delta_{v'})^{l}D^{\alpha+\beta_{2}}\langle v+v'\rangle^{\gamma}\bigg(\int e^{-iv'\eta}\\
             %&\quad\times(1-\Delta_{\eta})^{l'}\left(\langle\eta\rangle^{-2l}b^{(\alpha+\beta')}(\xi+\theta\eta)\bigg)\frac{d\eta}{(2\pi)^{3}}\right)\frac{dv'}{\langle v'\rangle^{2l'}}\\
             %&=\int(1-\Delta_{v'})^{l}D_{v}^{\alpha+\beta}\langle v+v'\rangle^{\gamma}\bigg(\int_{|\eta|\le\frac{\langle\xi\rangle}{2}}+\int_{|\eta|\ge\frac{\langle\xi\rangle}{2}}\\
             %&\quad e^{-iv'\eta}(1-\Delta_{\eta})^{l'}\left(\langle\eta\rangle^{-2l}a^{(\alpha+\beta')}(\xi+\theta\eta)\right)\frac{d\eta}{(2\pi)^{3}}\bigg)\frac{dv'}{\langle v'\rangle^{2l'}}\\
             =\sum_{\beta_{1}+\beta_{2}=\beta}C_{\beta}^{\beta_{1}}D^{\beta_{1}}_{v}\langle v\rangle^{m}\int_{\mathbb R^{3}}\langle D_{\xi'}\rangle^{2l'}a^{(\alpha+\beta')}(\xi+\theta\xi')\frac{G(v, \xi')d\xi'}{\langle \xi'\rangle^{2l}(2\pi)^{3}},
        \end{split}
    \end{equation*}
    where
    $$G(v, \xi')=\int_{\mathbb R_{3}}e^{-iv'\xi'}\langle D_{v'}\rangle^{2l}\left(\langle v'\rangle^{-2l'}D^{\alpha+\beta_{2}}\langle v+v'\rangle^{-m}\right)d v '.$$
    Now, we consider the term $G(v, \xi')$ first. By using the Leibniz formula and the fact $2l'>|m|+N+|\beta|+3$, one gets
    \begin{equation*}
        \begin{split}
            |G(v, \xi')|%&\le\int_{\mathbb R_{3}}\left|\langle D_{v'}\rangle^{2l}\left(\langle v'\rangle^{-2l'}D^{\alpha+\beta_{2}}\langle v+v'\rangle^{-m}\right)\right|dv'\\
            &\le C(l)\sum_{|\sigma|\le 2l}\int_{\mathbb R_{3}}\left|\partial^{\sigma}_{v'}\left(\langle v'\rangle^{-2l'}D^{\alpha+\beta_{2}}\langle v+v'\rangle^{-m}\right)\right|dv'\\
            &\le C(l, l', \alpha, \beta_{2})\int\langle v'\rangle^{-2l'}\langle v+v'\rangle^{-m-N-|\beta_{2}|}dv'\\
            %&\lesssim\int_{\mathbb R_{3}}\langle\eta\rangle^{-2l}\langle\xi+\theta\eta\rangle^{m-N-|\beta'|}dv'\\
            &\le C(l, l', \alpha, \beta_{2}, m)\langle v\rangle^{-m-N-|\beta_{2}|}\int_{\mathbb R_{3}}\langle v'\rangle^{-2l'+|m|+N+|\beta_{2}|}dv'\\
            &\le\tilde C(l, l', \alpha, \beta_{2}, m)\langle v\rangle^{-m-N-|\beta_{2}|}.
        \end{split}
    \end{equation*}
Substituting it into $D^{\beta}_{v}\partial^{\beta'}_{\xi}\left(\langle v\rangle^{m}r_{\theta, \alpha}(v, \xi)\right)$, since $\theta\in]0,1[$, by using Peetre's inequality, for all $v\in\mathbb R^{3}$
    \begin{equation*}%\label{r_theta alpha}
        \begin{split}
             &\left|D^{\beta}_{v}\partial^{\beta'}_{\xi}\left(\langle v\rangle^{m}r_{\theta,\alpha}(v,\xi)\right)\right|\\
             &\le C(l, l', \alpha, \beta, \beta', m)\langle v\rangle^{-N-|\beta|}\int_{\mathbb R^{3}}\langle \xi'\rangle^{-2l}\langle\xi+\theta\xi'\rangle^{r-N-|\beta'|}d\xi'\\
             &\le C(l, l', \alpha, \beta, \beta', m)\langle v\rangle^{-N-|\beta|}\langle\xi\rangle^{r-N-|\beta'|}2^{r+N+|\beta'|}\int_{\mathbb R^{3}}\langle \xi'\rangle^{-2l+|r|+N+|\beta'|}d\xi'\\
             &\le\tilde C(l, l', \alpha, \beta, \beta', m)\langle v\rangle^{-N-|\beta|}\langle\xi\rangle^{r-N-|\beta'|}\le\tilde C(l, l', \alpha, \beta, \beta', m)\langle\xi\rangle^{r-N-|\beta'|},
        \end{split}
    \end{equation*}
    here we use the fact $2l>r+N+|\beta'|+3$, hence
    $$\langle v\rangle^{m}r_{N}(v, D_{v})\in\Psi_{1,0}^{r-N},$$
    this implies $\langle v\rangle^{m}r_{N}(v, D_{v})\langle D_{v}\rangle^{N-r}\in\Psi_{1,0}^{0}$. Since $u(x, \cdot)\in\mathcal S(\mathbb R^{3}_{v})$, it follows that $g(x, \cdot)=\langle\cdot\rangle^{m}u(x, \cdot)\in L^{2}(\mathbb R^{3}_{v})$, then we have
    $$\left\|\langle \cdot\rangle^{m}r_{N}(\cdot, D_{v})\langle D_{v}\rangle^{N-r}\langle D_{v}\rangle^{r-N}g(x, \cdot)\right\|_{L^{2}(\mathbb R^{3}_{v})}\le C_{r, m}\left\|\langle D_{v}\rangle^{r-N}g(x, \cdot)\right\|_{L^{2}(\mathbb R^{3}_{v})}.$$
    Therefore, taking $N=1$, since $u\in L^{2}(\mathbb R^{3}_{x}; \mathcal S(\mathbb R^{3}_{v}))$, we can obtain that
    \begin{equation*}
        \begin{split}
            &\left\|\left[\langle D_{v}\rangle^{r}, \langle v\rangle^{m}\right]u\right\|_{L^{2}(\mathbb R^{6}_{x, v})}%=\left(\int_{\mathbb R^{3}_{x}}\left\|[\langle D_{v}\rangle^{r}, \langle v\rangle^{m}]u\right\|^{2}_{L^{2}(\mathbb R^{3}_{v})}dx\right)^{\frac12}\\
            %=\left\|\langle v\rangle^{m}r_{1}(v, D_{v})g\right\|_{L^{2}(\mathbb R^{6}_{x, v})}\\
            =\left\|\langle v\rangle^{m}r_{1}(v, D_{v})\langle D_{v}\rangle^{1-r}\langle D_{v}\rangle^{r-1}g\right\|_{L^{2}(\mathbb R^{6}_{x, v})}\\
            &=\left(\int_{\mathbb R^{3}_{x}}\left\|\langle \cdot\rangle^{m}r_{1}(\cdot, D_{v})\langle D_{v}\rangle^{1-r}\langle D_{v}\rangle^{r-1}g(x, \cdot)\right\|^{2}_{L^{2}(\mathbb R^{3}_{v})}dx\right)^{\frac12}\\
            %&\lesssim\sum_{j=1}^{N-1}\left\|\langle D_{v}\rangle^{r-j}g\right\|_{L^{2}(\mathbb R^{6}_{x, v})}+\left\|\langle D_{v}\rangle^{r-N}g\right\|_{L^{2}(\mathbb R^{6}_{x, v})}\\
            &\le C_{1}\left\|\langle D_{v}\rangle^{r-1}\left(\langle v\rangle^{m}u\right)\right\|_{L^{2}(\mathbb R^{6}_{x, v})},
        \end{split}
    \end{equation*}
    with $C_{1}$ depends on $m$ and $r$.
\end{proof}
\begin{remark}
Taking $r=2s$ in \eqref{commutator1}, if $0<s\le1/2$, then $2s-1\le0$, so that
\begin{equation}\label{0-s-1/2}
        \begin{split}
            \left\|\left[\langle D_{v}\rangle^{2s}, \langle v\rangle^{m}\right]u\right\|_{L^{2}(\mathbb R^{6}_{x, v})}&\le C_{1}\left\|\langle D_{v}\rangle^{2s-1}\left(\langle v\rangle^{m}u\right)\right\|_{L^{2}(\mathbb R^{6}_{x, v})}\\
            &\le C_{1}\left\|\langle v\rangle^{m}u\right\|_{L^{2}(\mathbb R^{6}_{x, v})},
        \end{split}
    \end{equation}
    if $1/2<s<1$, then from  \eqref{commutator1} we have
    \begin{equation}\label{1/2-s-1}
        \begin{split}
            \left\|\left[\langle D_{v}\rangle^{2s}, \langle v\rangle^{m}\right]u\right\|_{L^{2}(\mathbb R^{6}_{x, v})}\le C_{1}\left\|\langle D_{v}\rangle^{2s-1}(\langle v\rangle^{m}u)\right\|_{L^{2}(\mathbb R^{6}_{x ,v})}.
        \end{split}
    \end{equation}
\end{remark}
%Before giving the result of the commutator for the operator $M_{2s}^{k}(t, D_{x}, D_{v})$, we need the following result.

Now, recall the result from~\cite{M-4}, we have
\begin{lemma}(~\cite{M-4})\label{lemma  2.3}
For any $\alpha>0$, there exists $c_{\alpha}>0, C_{\alpha}>0$ such that $\forall t>0$
$$c_{\alpha}t\left(1+|\xi|^{2}+t^{2}|\eta|^{2}\right)^{\frac\alpha2}\le\int_{0}^{t}\left(1+|\xi+\rho\eta|^{2}\right)^{\frac\alpha2}d\rho\le C_{\alpha}t\left(1+|\xi|^{2}+t^{2}|\eta|^{2}\right)^{\frac\alpha2}.$$
\end{lemma}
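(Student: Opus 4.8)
The plan is to reduce the two-sided estimate to a scale-invariant one and then handle the two bounds separately; the lower bound is the delicate part. After the substitution $\rho=t\tau$, $\tau\in[0,1]$, the middle term becomes $t\int_0^1\bigl(1+|a+\tau b|^2\bigr)^{\alpha/2}\,d\tau$ with $a:=\xi$ and $b:=t\eta$, and $1+|\xi|^2+t^2|\eta|^2=1+|a|^2+|b|^2$; so it suffices to produce constants $0<c_\alpha\le C_\alpha$, depending only on $\alpha$, with
$$c_\alpha\bigl(1+|a|^2+|b|^2\bigr)^{\alpha/2}\le\int_0^1\bigl(1+|a+\tau b|^2\bigr)^{\alpha/2}\,d\tau\le C_\alpha\bigl(1+|a|^2+|b|^2\bigr)^{\alpha/2}$$
for all vectors $a,b$. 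The upper bound is immediate: $|a+\tau b|\le|a|+|b|$ on $[0,1]$ forces $1+|a+\tau b|^2\le2\bigl(1+|a|^2+|b|^2\bigr)$, and one integrates the $\alpha/2$-th power, so $C_\alpha=2^{\alpha/2}$ works.

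For the lower bound I would dispose of $b=0$ trivially and, for $b\neq0$, write $a=\lambda b+a_\perp$ with $a_\perp\perp b$, so that $q(\tau):=|a+\tau b|^2=(\tau+\lambda)^2|b|^2+|a_\perp|^2$ is a quadratic in $\tau$ with vertex at $\tau_\ast=-\lambda$, while $1+|a|^2+|b|^2=1+(\lambda^2+1)|b|^2+|a_\perp|^2$. The core of the argument is to exhibit a subinterval of $[0,1]$ of length bounded below by an absolute constant on which $1+q(\tau)$ is comparable, with absolute constants, to $1+|a|^2+|b|^2$; integrating $(1+q)^{\alpha/2}$ over that subinterval then gives the lower bound. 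This is achieved by a dichotomy on the position of the vertex. If $|\lambda|\ge2$, then $|\tau+\lambda|\ge|\lambda|/2$ throughout $[0,1]$, hence $q(\tau)\ge\tfrac14\lambda^2|b|^2+|a_\perp|^2$, and since $|b|^2\le\tfrac14\lambda^2|b|^2$ this yields $1+q(\tau)\ge\tfrac15\bigl(1+|a|^2+|b|^2\bigr)$ on all of $[0,1]$. If $|\lambda|<2$, then $1+|a|^2+|b|^2\le5\bigl(1+|b|^2+|a_\perp|^2\bigr)$, and on whichever of $[0,\tfrac14]$ and $[\tfrac34,1]$ is farther from $\tau_\ast$ one has $|\tau+\lambda|\ge\tfrac14$, so $q(\tau)\ge\tfrac1{16}|b|^2+|a_\perp|^2$ and hence $1+q(\tau)\ge\tfrac1{80}\bigl(1+|a|^2+|b|^2\bigr)$ on that length-$\tfrac14$ interval. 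Undoing the substitution then gives the lemma with explicit constants.

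The step I expect to be the main obstacle is the lower bound for small exponents $0<\alpha<2$: there $x\mapsto(1+x)^{\alpha/2}$ is concave, so a Jensen-type lower bound for $\int_0^1(1+q)^{\alpha/2}\,d\tau$ in terms of $\bigl(1+\int_0^1 q\,d\tau\bigr)^{\alpha/2}$ is not available, and one must genuinely descend to a pointwise bound on a subinterval of definite length as above; setting up the vertex dichotomy cleanly is where the only care is needed. Everything else — the change of variables, the upper bound, and tracking the constants — is elementary.
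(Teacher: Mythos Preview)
Your argument is correct and complete. The paper does not prove this lemma at all: it is simply quoted from \cite{M-4} and used as a black box, so there is no ``paper's own proof'' to compare against. Your approach --- the substitution $\rho=t\tau$ to reduce to a $t$-free estimate, the trivial upper bound via $|a+\tau b|\le|a|+|b|$, and for the lower bound the orthogonal decomposition $a=\lambda b+a_\perp$ followed by a dichotomy on whether the vertex $-\lambda$ of the quadratic $q(\tau)$ lies near $[0,1]$ --- is the natural one and works cleanly. The constants you obtain ($C_\alpha=2^{\alpha/2}$ and $c_\alpha=\tfrac14\cdot 80^{-\alpha/2}$, say) are explicit, which is more than the paper needs. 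Your remark about concavity for $0<\alpha<2$ ruling out a Jensen shortcut is accurate, and the subinterval argument you give is exactly the right workaround.
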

For $0<s<1$, set $\tilde s=\min\{1/2, s\}$ and
\begin{equation*}%\label{def-M}
     M_{2\tilde s}(t, D_{x}, D_{v})=\int_{0}^{t}\left\langle D_{v}+\rho D_{x}\right\rangle^{2\tilde s}d\rho\sim t\left(1-\Delta_{v}-t^{2}\Delta_{x}\right)^{\tilde s},
\end{equation*}
we consider the commutator for the operator $M_{2\tilde s}(t, D_{x}, D_{v})$.
%\begin{equation}\label{M}
    %M_{2\tilde s}(t, D_{x}, D_{v})\sim t\left(1-\Delta_{v}-t^{2}\Delta_{x}\right)^{\tilde s}.
%\end{equation}

\begin{lemma}\label{commutator M}
     Let $0<s<1$ and $m\in\mathbb R$, then there exists a constant $C_{2}>0$, such that for any $k\in\mathbb N_{+}$
     \begin{equation}\label{2-3-1}
     \begin{split}
          &\left\|\langle v\rangle^{-\frac m2}\left[M^{k}_{2\tilde s}, \langle v\rangle^{m}\right]u\right\|_{L^{2}(\mathbb R^{6}_{x, v})}\\
          &\le\sum_{p=0}^{k-1}\left(C_{2}(t+1)\right)^{k-p}C_{k}^{p}\left\|\langle v\rangle^{\frac m2}M^{p}_{2\tilde s}u\right\|_{L^{2}(\mathbb R^{6}_{x, v})}, \quad\forall u\in H^{k}(\mathbb R^{3}_{x}; \mathcal S(\mathbb R^{3}_{v})).
     \end{split}
     \end{equation}
     %and
     %\begin{equation}\label{2-3-2}
          %\left\|\left[M^{k}_{2\tilde s}, \langle v\rangle^{\gamma}\right]u\right\|_{L^{2}(\mathbb R^{6}_{x, v})}\le C_{3}t\left\|\langle v\rangle^{\gamma}M^{k-1}_{2\tilde s}u\right\|_{L^{2}(\mathbb R^{6}_{x, v})}.
     %\end{equation}
     where the constant $C_{2}$ depends on $m, s$ and $C_{k}^{p}=\frac{k!}{p!(k-p)!}$.
\end{lemma}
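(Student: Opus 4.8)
The plan is to combine a commutator telescoping with a shear change of variables that turns $M_{2\tilde s}$ into the elliptic operator $\langle D_v\rangle^{2\tilde s}$, for which Lemma~\ref{lemma2.2} is available, and then to track the binomial weights carefully.

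\emph{The single commutator.} On $L^2(\mathbb R^6_{x,v})$ introduce the shear $V_\rho$, $(V_\rho g)(x,v)=g(x-\rho v,v)$, which is unitary, commutes with multiplication by any function of $v$, and satisfies $V_\rho D_v V_\rho^{-1}=D_v+\rho D_x$, hence $V_\rho\langle D_v\rangle^{2\tilde s}V_\rho^{-1}=\langle D_v+\rho D_x\rangle^{2\tilde s}$. Thus $[\langle D_v+\rho D_x\rangle^{2\tilde s},\langle v\rangle^m]=V_\rho[\langle D_v\rangle^{2\tilde s},\langle v\rangle^m]V_\rho^{-1}$, and integrating over $\rho\in[0,t]$ gives $[M_{2\tilde s},\langle v\rangle^m]=\int_0^tV_\rho[\langle D_v\rangle^{2\tilde s},\langle v\rangle^m]V_\rho^{-1}\,d\rho$. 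Since $\tilde s=\min\{1/2,s\}$ forces $2\tilde s-1\le 0$, Lemma~\ref{lemma2.2} with $r=2\tilde s$ applies, and following its proof $[\langle D_v\rangle^{2\tilde s},\langle v\rangle^m]=Q\langle v\rangle^m$ with $Q=Q(v,D_v)\in\Psi^{2\tilde s-1}_{1,0}(\mathbb R^3_v)$ bounded on $L^2$. Hence $[M_{2\tilde s},\langle v\rangle^m]=\mathcal Q_t\langle v\rangle^m$ with $\mathcal Q_t=\int_0^tQ(v,D_v+\rho D_x)\,d\rho$ and $\|\mathcal Q_t\|_{\mathcal L(L^2)}\le C_1t$; the same computation with $m$ replaced by $\pm m/2$, together with the fact that conjugation by $\langle v\rangle^{\pm m/2}$ preserves the relevant symbol class, shows that $\langle v\rangle^{\mp m/2}\mathcal Q_t\langle v\rangle^{\pm m/2}$ is bounded on $L^2$ with norm $\le C(1+t)$.

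\emph{Reformulation.} Put $w=\langle v\rangle^{m/2}u$, $A=\langle v\rangle^{-m/2}M_{2\tilde s}\langle v\rangle^{m/2}$ and $B=\langle v\rangle^{m/2}M_{2\tilde s}\langle v\rangle^{-m/2}$. Since $\langle v\rangle^{-m/2}M_{2\tilde s}^{\,p}\langle v\rangle^{m/2}=A^p$ and $\langle v\rangle^{m/2}M_{2\tilde s}^{\,p}u=B^pw$, the left side of \eqref{2-3-1} equals $\|(A^k-B^k)w\|_{L^2}$ and the right side is $\sum_{p=0}^{k-1}(C_2(t+1))^{k-p}C_k^p\|B^pw\|_{L^2}$. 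By the first step $A=M_{2\tilde s}+S_A$, $B=M_{2\tilde s}+S_B$ and $R:=A-B$, where $S_A$, $S_B$, $R$ are bounded on $L^2$ with norm $\le C(1+t)$ and, along with $M_{2\tilde s}$, lie in the pseudodifferential calculus obtained from the standard H\"ormander classes on $\mathbb R^3_v$ by conjugation with the shears $V_\rho$. One writes $A^k-B^k=\sum_{j=0}^{k-1}A^jRB^{k-1-j}$ and uses repeatedly the identity $P^jT=\sum_{r=0}^jC_j^r(\mathrm{ad}_P)^r(T)\,P^{j-r}$, where $\mathrm{ad}_P(T)=[P,T]$, with $P=B$, in order to move every factor of $B$ to the right onto $w$. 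The key structural fact is that commuting with $M_{2\tilde s}$, an operator of order $2\tilde s\le 1$, does not raise the order; therefore each iterated bracket $(\mathrm{ad}_B)^r$ of a bounded operator of the calculus is again bounded on $L^2$, with norm at most $(C(1+t))^r$ times the original. This is exactly the place where the choice $\tilde s=\min\{1/2,s\}$, and not $\tilde s=s$, matters.

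\emph{Combinatorics and the main obstacle.} After the previous step $(A^k-B^k)w$ becomes a finite sum of terms $\mathcal TB^pw$, $0\le p\le k-1$, with $\mathcal T$ bounded, and the total coefficient of $\|B^pw\|_{L^2}$ is bounded, after resummation, by $\bigl(\sum_{j=k-1-p}^{k-1}C_j^{\,k-1-p}\bigr)(C(1+t))^{k-p}$. The hockey-stick identity $\sum_{j=k-1-p}^{k-1}C_j^{\,k-1-p}=C_k^{\,k-p}=C_k^{\,p}$ then reduces this to $(C_2(t+1))^{k-p}C_k^p$ for a suitable $C_2$ depending on $m$ and $s$, which is \eqref{2-3-1}. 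To keep the many bounded factors and their iterated brackets organized I would actually carry out this last step as an induction on $k$, splitting $[M_{2\tilde s}^k,\langle v\rangle^m]=M_{2\tilde s}[M_{2\tilde s}^{k-1},\langle v\rangle^m]+[M_{2\tilde s},\langle v\rangle^m]M_{2\tilde s}^{k-1}$, estimating the second summand by the first step and pushing the outer $M_{2\tilde s}$ in the first summand through the remainders with the induction hypothesis. The main obstacle is precisely this bookkeeping---tracking how the binomial weights accumulate while the unbounded operator $M_{2\tilde s}$ is repeatedly commuted past the bounded remainders---whereas the analytic input (the shear reduction, Lemma~\ref{lemma2.2}, and the inequality $2\tilde s-1\le 0$) is elementary.
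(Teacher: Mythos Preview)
Your shear reduction for $k=1$ is a genuine simplification over the paper: conjugating by the unitary $V_\rho$ reduces $[\langle D_v+\rho D_x\rangle^{2\tilde s},\langle v\rangle^m]$ to $V_\rho[\langle D_v\rangle^{2\tilde s},\langle v\rangle^m]V_\rho^{-1}$, so that Lemma~\ref{lemma2.2} (with $r=2\tilde s$, using $2\tilde s-1\le 0$) applies directly; the paper instead re-expands the oscillatory remainder $r_{\theta,t,j}(v,\eta,\xi)$ from scratch. For the induction both routes need the same analytic input---that the order-$0$ remainder $A=\langle v\rangle^{-m}[M_{2\tilde s},\langle v\rangle^m]$ stays $L^2$-bounded with norm $\le C(t+1)$ after conjugation by $M_{2\tilde s}^{k-1}$ (equivalently, in your language, that iterated brackets $(\mathrm{ad}_B)^r$ cost only $(C(t+1))^r$). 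You assert this; the paper obtains it from Lemma~2.2 of \cite{H-4}.

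The gap is in your inductive splitting. The paper writes $[M^k,\langle v\rangle^m]=[M^{k-1},\langle v\rangle^m]M+M^{k-1}[M,\langle v\rangle^m]$, with $M$ on the \emph{right} of the $(k-1)$-fold commutator, so that after multiplying by $\langle v\rangle^{-m/2}$ the induction hypothesis applies directly with input $Mu$; the second summand is then split further into $J_2+J_3$. Your version $M[M^{k-1},\langle v\rangle^m]+[M,\langle v\rangle^m]M^{k-1}$ places the unbounded $M$ on the \emph{left} of the term one wants to feed to the induction hypothesis. But that hypothesis is only an $L^2$ bound on $\langle v\rangle^{-m/2}[M^{k-1},\langle v\rangle^m]u$, and an outer application of $M$ destroys it; ``pushing $M_{2\tilde s}$ through the remainders'' would require an operator factorization for the lower-order commutator, not merely the norm estimate you have by induction. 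Reversing the splitting to match the paper's fixes this immediately. (Your alternative telescoping route has a smaller slip: the identity $P^jT=\sum_rC_j^r(\mathrm{ad}_P)^r(T)P^{j-r}$ with $P=B$ handles $B^jT$, not the $A^j$ that actually sits on the left of $R$; one first needs $A^j=(B+R)^j$ expanded non-commutatively before any $B$'s can be pushed right, which is heavier than advertised.)
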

\begin{proof}
    We show \eqref{2-3-1} holds by induction on the index $k$. For $k=1$, set $g(x, v)=\langle v\rangle^{\frac m2}u(x, v)$ it follows that
    \begin{equation*}
        \begin{split}
           \langle v\rangle^{-\frac m2}\left[M_{2\tilde s}, \langle v\rangle^{m}\right]u%&=\langle v\rangle^{-m/2}M_{2\tilde s}\left(\langle v\rangle^{m/2}g\right)-\langle v\rangle^{m/2}M_{2\tilde s}\left(\langle v\rangle^{-m/2}g\right)\\
           &=\langle v\rangle^{-\frac m2}\left[M_{2\tilde s}, \langle v\rangle^{\frac m2}\right]g-\langle v\rangle^{\frac m2}\left[M_{2\tilde s}, \langle v\rangle^{-\frac m2}\right]g\\
           &=\langle v\rangle^{-\frac m2}\sum_{j=1}^{3}r_{t, j}(v, D_{x}, D_{v})g-\langle v\rangle^{\frac m2}\sum_{j=1}^{3}\tilde r_{t, j}(v, D_{x}, D_{v})g,
          \end{split}
    \end{equation*}
    here
    \begin{equation*}
        \begin{split}
            r_{t, j}(v, \eta, \xi)=\int_{0}^{1}r_{\theta, t, j}(v, \eta, \xi)d\theta \quad {\rm and}\quad \tilde r_{t, j}(v, \eta, \xi)=\int_{0}^{1}\tilde r_{\theta, t, j}(v, \eta, \xi)d\theta,
            %&=\sum_{|\alpha|=1}\int_{0}^{1}\left(\iint e^{-iv'\xi'}D^{\alpha}\langle v+v'\rangle^{-m}\partial^{\alpha}_{\xi}M^{k}_{2\tilde s}(t, \eta, \xi+\theta\xi')\frac{dv'd\xi'}{(2\pi)^{3}}\right)d\theta\\
            %&=N\sum_{|\alpha|=N}\int_{0}^{1}\frac{(1-\theta)^{N-1}}{\alpha!}r_{\theta, \alpha}(v, \eta, \xi)d\theta,
        \end{split}
    \end{equation*}
    with the oscillating integrals $r_{\theta, t, j}(v, \eta, \xi)$ and $\tilde r_{\theta, t, j}(v, \eta, \xi)$, defined via
    $$r_{\theta, t, j}(v, \eta, \xi)=Os-\iint e^{-iv'\xi'}D_{j}\langle v+v'\rangle^{\frac m2}\partial_{\xi_{j}}M_{2\tilde s}(t, \eta, \xi+\theta\xi')\frac{dv'd\xi'}{(2\pi)^{3}}.$$
    $$\tilde r_{\theta, t, j}(v, \eta, \xi)=Os-\iint e^{-iv'\xi'}D_{j}\langle v+v'\rangle^{-\frac m2}\partial_{\xi_{j}}M_{2\tilde s}(t, \eta, \xi+\theta\xi')\frac{dv'd\xi'}{(2\pi)^{3}}.$$
   %Now, we consider $r_{\theta, \alpha}(v, \eta, \xi)$, and want to show that there exist $C(m, s)>0$ depends on $m$ and $s$ such that
   %$$\left|r_{\theta, \alpha}(v, \eta, \xi)\right|\le2^{k-1}C(m, s)kt\langle v\rangle^{m}M^{k-1}_{2\tilde s}(t, \eta, \xi).$$
   Set
   $$R_{\theta, t, j}(v, \eta, \xi)=\langle v\rangle^{-\frac m2}r_{\theta, t, j}(v, \eta, \xi) \quad {\rm and}\quad\tilde R_{\theta, t, j}(v, \eta, \xi)=\langle v\rangle^{\frac m2}\tilde r_{\theta, t, j}(v, \eta, \xi),$$
   $$R_{t, j}(v, \eta, \xi)=\int_{0}^{1}R_{\theta, t, j}(v, \eta, \xi)d\theta \quad {\rm and} \quad \tilde R_{t, j}(v, \eta, \xi)=\int_{0}^{1}\tilde R_{\theta, t, j}(v, \eta, \xi)d\theta,$$
   then $\langle v\rangle^{-\frac m2}\left[M_{2\tilde s}, \langle v\rangle^{m}\right]u$ can be rewritten as
    \begin{equation*}
        \begin{split}
           \langle v\rangle^{-\frac m2}\left[M_{2\tilde s}, \langle v\rangle^{m}\right]u%&=\langle v\rangle^{-m/2}M_{2\tilde s}\left(\langle v\rangle^{m/2}g\right)-\langle v\rangle^{m/2}M_{2\tilde s}\left(\langle v\rangle^{-m/2}g\right)\\
           %&=\langle v\rangle^{-m/2}\left[M_{2\tilde s}, \langle v\rangle^{m/2}\right]g-\langle v\rangle^{m/2}\left[M_{2\tilde s}, \langle v\rangle^{-m/2}\right]g\\
           &=\sum_{j=1}^{3}R_{t, j}(v, D_{x}, D_{v})g-\sum_{j=1}^{3}\tilde R_{t, j}(v, D_{x}, D_{v})g,
          \end{split}
    \end{equation*}
    we would show that $R_{t, j}(v, \eta, \xi)$ and $\tilde R_{t, j}(v, \eta, \xi)$ are the pseudo-differential operators of order $0$.

    For any $\alpha, \beta\in\mathbb N^{3}$, by using the identity \eqref{identity} and the fact $2l>4+|\beta|-2\tilde s$, $2l'>\frac{|m|}{2}+|\alpha|+4$, then from integration by part and the Leibniz formula,
   %\begin{equation*}\label{r-theta-j}
        %\begin{split}
            %r_{\theta, j}(v, \eta, \xi)=\int\langle D_{\xi'}\rangle^{2l'}\partial_{\xi_{j}}M_{2\tilde s}(t, \eta, \xi+\theta\xi')T(v, \xi')\frac{d\xi'}{\langle\xi'\rangle^{2l}(2\pi)^{3}},
        %\end{split}
    %\end{equation*}
    \begin{equation*}
        \begin{split}
            &\partial^{\alpha}_{v}\partial^{\beta}_{\xi}R_{\theta, t, j}(v, \eta, \xi)=\partial^{\alpha}_{v}\partial^{\beta}_{\xi}\left(\langle v\rangle^{-\frac m2}r_{\theta, t, j}(v, \eta, \xi)\right)\\
            &=\sum_{\alpha_{1}+\alpha_{2}=\alpha}C^{\alpha_{1}}_{\alpha}\int\langle D_{\xi'}\rangle^{2l'}\partial^{\beta}_{\xi}\partial_{\xi_{j}}M_{2\tilde s}(t, \eta, \xi+\theta\xi')\partial^{\alpha_{1}}_{v}\langle v\rangle^{-\frac m2}\partial^{\alpha_{2}}_{v}T(v, \xi')\frac{d\xi'}{\langle\xi'\rangle^{2l}(2\pi)^{3}},
        \end{split}
    \end{equation*}
    where
    $$T(v, \xi')=\int e^{-iv'\xi'}\langle D_{v'}\rangle^{2l}\left(\langle v'\rangle^{-2l'}D_{j}\langle v+v'\rangle^{\frac m2}\right)dv'.$$
    %$$\tilde T(v, \xi')=\int e^{-iv'\xi'}\langle D_{v'}\rangle^{2l}\left(\langle v'\rangle^{-2l'}D_{j}\langle v+v'\rangle^{-\frac m2}\right)dv'.$$
Now, we consider $\partial_{v}^{\alpha_{2}}T(v, \xi')$. For any $\alpha\in\mathbb N^{3}$, noting that
    $$\partial_{v}^{\alpha}\langle v+v'\rangle^{\frac m2}\le C(m, \alpha)\langle v+v'\rangle^{\frac m2-|\alpha|},$$
    then by using the Leibniz formula and Peetre's inequality, we have
    \begin{equation*}
        \begin{split}
            \left|\partial_{v}^{\alpha_{2}}T(v, \xi')\right|%&\le C(l)\sum_{|\sigma|\le 2l}\int\left|\partial^{\sigma}_{v'}\left(\langle v'\rangle^{-2l'}\partial_{v}^{\alpha_{2}}D_{j}\langle v+v'\rangle^{-m}\right)\right|dv'\\
            &\le C(l)\sum_{|\sigma|\le 2l}\sum_{\sigma_{1}+\sigma_{2}=\sigma}C_{\sigma}^{\sigma_{1}}\int\left|\partial^{\sigma_{1}}_{v'}\langle v'\rangle^{-2l'}\partial^{\sigma_{2}}_{v'}\partial_{v}^{\alpha_{2}}D_{j}\langle v+v'\rangle^{\frac m2}\right|dv'\\
            &\le C(l)\sum_{|\sigma|\le 2l}C(\sigma, \alpha_{2}, m, l')\int\langle v'\rangle^{-2l'}\langle v+v'\rangle^{\frac m2-1-|\alpha_{2}|}dv'\\
            &\le 2^{1+|\alpha_{2}|+\frac{|m|}{2}}C(l, l', m, \alpha_{2})\langle v\rangle^{\frac m2-1-|\alpha_{2}|}\int\langle v'\rangle^{-2l'+1+|\alpha_{2}|+\frac{|m|}{2}}dv'.
        \end{split}
    \end{equation*}
    %here $C(l)$ is the constant depending on $l$, and $\left[\cdot\right]$ is the the integer part.
    %For $m/2>1+|\alpha_{2}|$, using the Leibniz formula and Peetre's inequality, one gets
    %\begin{equation*}
        %\begin{split}
            %\left|\partial_{v}^{\alpha_{2}}T(v, \xi')\right|%&\le C(l)\sum_{|\sigma|\le 2l}\int\left|\partial^{\sigma}_{v'}\left(\langle v'\rangle^{-2l'}D_{j}\langle v+v'\rangle^{\gamma}\right)\right|dv'\\
            %&\le C(l)\sum_{|\sigma|\le 2l}\sum_{\sigma_{1}+\sigma_{2}=\sigma}C_{\sigma}^{\sigma_{1}}\int\left|\partial^{\sigma_{1}}_{v'}\langle v'\rangle^{-2l'}\partial^{\sigma_{2}}_{v'}D_{j}\langle v+v'\rangle^{\gamma}\right|dv'\\
            %&\le C(l)\sum_{|\sigma|\le 2l}\sum_{\sigma_{1}+\sigma_{2}=\sigma}C_{\sigma}^{\sigma_{1}}\left(2l'+|\sigma_{1}|\right)!\left(\left[m/2-1-|\alpha_{2}|\right]+1\right)!\\
            %&\quad\times\left(\left[|\sigma_{2}|-m/2+1+|\alpha_{2}|\right]+1\right)!\int\langle v'\rangle^{-2l'}\langle v+v'\rangle^{m/2-1-|\alpha_{2}|}dv'\\
            %&\le 2^{m/2-1-|\alpha_{2}|}C(l)\sum_{|\sigma|\le 2l}2^{|\sigma|}\left(2l'+|\sigma|\right)!\left(|\sigma|+1\right)!\\
            %&\quad\times\langle v\rangle^{m/2-1-|\alpha_{2}|}\int\langle v'\rangle^{-2l'+m/2-1-|\alpha_{2}|}dv'.
        %\end{split}
    %\end{equation*}
    %Hence, for any $n\in\mathbb R$, there exists a constant $C(l, l')$ depends on $l$ and $l'$
   %$\begin{equation*}%\label{T}
        %\begin{split}
            %\left|\partial_{v}^{\alpha_{2}}T(v, \xi')\right|&\le2^{|1-m/2|+|\alpha_{2}|}C(l, l', \alpha_{2})\langle v\rangle^{m/2-1-|\alpha_{2}|}\int\langle v'\rangle^{-2l'+|1-m/2|+|\alpha_{2}|}dv'.
        %\end{split}
    %\end{equation*}
    Since $2l'>\frac{|m|}{2}+|\alpha|+4$, %we have
    %$$\int\langle v'\rangle^{-2l'+1+\frac{|m|}{2}+|\alpha_{2}|}dv'\sim\left(l'-\frac{1+|m/2|+|\alpha_{2}|}{2}\right)^{-3/2},$$
    it follows that
    $$\left|\partial_{v}^{\alpha_{2}}T(v, \xi')\right|\le C(l, m, \alpha_{2}, \beta)\langle v\rangle^{\frac m2-1-|\alpha_{2}|}.$$
    Plugging it back into $\partial^{\alpha}_{v}\partial^{\beta}_{\xi}R_{\theta, t, j}(v, \eta, \xi)$, it follows that
    \begin{equation*}
        \begin{split}
             \left|\partial^{\alpha}_{v}\partial^{\beta}_{\xi}R_{\theta, t, j}(v, \eta, \xi)\right|&\le C(l, m, \alpha, \beta)\langle v\rangle^{-|\alpha|-1}\\
             &\quad\times\sum_{|\sigma|\le2l'}\int\langle\xi'\rangle^{-2l}\left|\partial^{\sigma}_{\xi'}\partial^{\beta}_{\xi}\partial_{\xi_{j}}M_{2\tilde s}(t, \eta, \xi+\theta\xi')\right|d\xi'.
            %&\le C(m, l)C(m)K^{l}\langle v\rangle^{m}\int\langle\xi'\rangle^{-2l}\left|\partial^{\alpha}_{\xi}M^{k}_{2\tilde s}(t, \eta, \xi+\theta\xi')\right|d\xi'\\
            %&\quad+C(m, l)C(m)K^{l}\langle v\rangle^{m}\sum_{2\le|\sigma'|\le2l'+1}\int\langle\xi'\rangle^{-2l}\left|\partial^{\sigma'}_{\xi}M^{k}_{2\tilde s}(t, \eta, \xi+\theta\xi')\right|d\xi'\\
            %&=S_{1}+S_{2}.
        \end{split}
    \end{equation*}
    From Lemma \ref{lemma  2.3}, it follows that $M_{2\tilde s}(t, D_{x}, D_{v})\sim t\left(1-\Delta_{v}-t^{2}\Delta_{x}\right)^{\tilde s}$,   then we have for any $\sigma\in\mathbb N^{3}$,
    $$\left|\partial^{\sigma}_{\xi}M_{2\tilde s}(t, \eta, \xi)\right|\le C(s, \sigma)t(1+|\xi|^{2}+t^{2}|\eta|^{2})^{\frac{2\tilde s-|\sigma|}{2}},$$
    %Since $M_{2\tilde s}(t, D_{x}, D_{v})$ is a pseud-differential operator of $2\tilde s$, we have
    %$$\left|\partial^{\sigma}_{\xi}M_{2\tilde s}(t, \eta, \xi)\right|\le C_{\sigma, s}M^{\frac{2\tilde s-|\sigma|}{2\tilde s}}_{2\tilde s}(t, \eta, \xi), \quad \forall\sigma\in\mathbb N^{3},$$
    so that using the fact $\theta\in]0,1[$, one has for any $\alpha, \beta\in\mathbb N^{3}$
    \begin{equation*}
    \begin{split}
        \left|\partial^{\alpha}_{v}\partial^{\beta}_{\xi}R_{\theta, t, j}(v, \eta, \xi)\right|&\le C(l', l, m, \alpha, \beta)t\int\langle\xi'\rangle^{-2l}(1+|\xi+\theta\xi'|^{2}+t^{2}|\eta|^{2})^{\frac{2\tilde s-1-|\beta|}{2}}d\xi'\\
        &\le C(s, m, \alpha, \beta)t\int\langle\xi'\rangle^{-2l+1+|\beta|-2\tilde s}d\xi'(1+|\xi|^{2}+t^{2}|\eta|^{2})^{\frac{2\tilde s-1-|\beta|}{2}}.
        %&\le\tilde C_{m, s, \alpha, \beta}t(1+|\xi|^{2}+t^{2}|\eta|^{2})^{\frac{2\tilde s-1-|\beta|}{2}}.
    \end{split}
    \end{equation*}
    Since $2l>4+|\beta|-2\tilde s$, one has for any $\alpha, \beta\in\mathbb R^{3}$
    \begin{equation*}%\label{r-j}
    \begin{split}
        &\left|\partial^{\alpha}_{v}\partial^{\beta}_{\xi}R_{t, j}(v, \eta, \xi)\right|
        %&\le c(m, \alpha, \beta)\sum_{|\sigma|\le2l'}\left(|\sigma|+|\beta|-1\right)!\int\langle\xi'\rangle^{-2l+1+|\beta|-2\tilde s}d\xi'M_{2\tilde s-1-|\beta|}(t, \eta, \xi)\\
        \le C(s, m, \alpha, \beta)t(1+|\xi|^{2}+t^{2}|\eta|^{2})^{\frac{-|\beta|}{2}}.
    \end{split}
    \end{equation*}
    Similarly, we can prove that
    \begin{equation*}%\label{r-j}
    \begin{split}
        &\left|\partial^{\alpha}_{v}\partial^{\beta}_{\xi}\tilde R_{t, j}(v, \eta, \xi)\right|
        %&\le c(m, \alpha, \beta)\sum_{|\sigma|\le2l'}\left(|\sigma|+|\beta|-1\right)!\int\langle\xi'\rangle^{-2l+1+|\beta|-2\tilde s}d\xi'M_{2\tilde s-1-|\beta|}(t, \eta, \xi)\\
        \le C(s, m, \alpha, \beta)t(1+|\xi|^{2}+t^{2}|\eta|^{2})^{\frac{-|\beta|}{2}}.
    \end{split}
    \end{equation*}
    Hence $R_{t, j}(v, D_{x}, D_{v})$ and $\tilde R_{t, j}(v, D_{x}, D_{v})$ are the pseudo-differential operators of order $0$. So that for all $g\in L^{2}(\mathbb R^{6}_{x, v})$,  we have
    $$\left\|R_{t, j}(v, D_{x}, D_{v})g\right\|_{L^{2}(\mathbb R^{6}_{x, v})}\lesssim t\left\|g\right\|_{L^{2}(\mathbb R^{6}_{x, v})},$$
    and
    $$\left\|\tilde R_{t, j}(v, D_{x}, D_{v})g\right\|_{L^{2}(\mathbb R^{6}_{x, v})}\lesssim t\left\|g\right\|_{L^{2}(\mathbb R^{6}_{x, v})}.$$
    Since $u\in H^{1}(\mathbb R^{3}_{x}; \mathcal S(\mathbb R^{3}_{v}))$, we have $\langle v\rangle^{\frac m2}u\in L^{2}(\mathbb R^{6}_{x, v})$, therefore
     \begin{equation*}
        \begin{split}
            &\left\|\langle v\rangle^{-\frac m2}\left[M_{2\tilde s}, \langle v\rangle^{m}\right]u\right\|_{L^{2}(\mathbb R^{6}_{x, v})}\\
            &\le\sum_{j=1}^{3}\left(\left\|R_{t, j}(v, D_{x}, D_{v})g\right\|_{L^{2}(\mathbb R^{6}_{x, v})}+\left\|\tilde R_{t, j}(v, D_{x}, D_{v})g\right\|_{L^{2}(\mathbb R^{6}_{x, v})}\right)\\
            %&\le C_{2}t\left\|\langle v\rangle^{m/2}u\right\|_{L^{2}(\mathbb R^{6}_{x, v})}\\
            &\le C_{2}(t+1)\left\|\langle v\rangle^{\frac m2}u\right\|_{L^{2}(\mathbb R^{6}_{x, v})},%=\frac12C_{2}(t+1)\left\|\langle v\rangle^{\frac m2}u\right\|_{L^{2}(\mathbb R^{6}_{x, v})},
        \end{split}
     \end{equation*}
     with $C_{2}$ depends on $m$ and $s$. Assume that $k\ge2$ and for any $1\le j\le k-1$
     \begin{equation}\label{M-j}
     \begin{split}
          &\left\|\langle v\rangle^{-\frac m2}\left[M^{j}_{2\tilde s}, \langle v\rangle^{m}\right]u\right\|_{L^{2}(\mathbb R^{6}_{x, v})}\\
          &\le\sum_{p=0}^{j-1}\left(C_{2}(t+1)\right)^{j-p}C_{j}^{p}\left\|\langle v\rangle^{\frac m2}M^{p}_{2\tilde s}u\right\|_{L^{2}(\mathbb R^{6}_{x, v})}, \quad\forall u\in H^{j}(\mathbb R^{3}_{x}; \mathcal S(\mathbb R^{3}_{v})).
     \end{split}
     \end{equation}
     We would show \eqref{M-j} is true for $j=k$. Noting that
     \begin{equation}\label{M-k-s}
        \begin{split}
            &\langle v\rangle^{-\frac{m}{2}}\left[M^{k}_{2\tilde s}, \langle v\rangle^{m}\right]u=\langle v\rangle^{-\frac{m}{2}}M^{k}_{2\tilde s}\left(\langle v\rangle^{m}u\right)-\langle v\rangle^{\frac{m}{2}}M^{k}_{2\tilde s}u\\
            %&=M^{k-1}_{2\tilde s}M_{2\tilde s}\left(\langle v\rangle^{m}u\right)-M^{k-1}_{2\tilde s}\left(\langle v\rangle^{m}M_{2\tilde s}u\right)\\
            %&\quad+M^{k-1}_{2\tilde s}\left(\langle v\rangle^{m}M_{2\tilde s}u\right)-\langle v\rangle^{m}M^{k-1}_{2\tilde s}M_{2\tilde s}u\\
            &=\langle v\rangle^{-\frac{m}{2}}\left[M^{k-1}_{2\tilde s}, \langle v\rangle^{m}\right]M_{2\tilde s}u+\langle v\rangle^{-\frac{m}{2}}M^{k-1}_{2\tilde s}\left(\langle v\rangle^{m}\langle v\rangle^{-m}\left[M_{2\tilde s}, \langle v\rangle^{m}\right]u\right)\\
            &=\langle v\rangle^{-\frac{m}{2}}\left[M^{k-1}_{2\tilde s}, \langle v\rangle^{m}\right]M_{2\tilde s}u+\langle v\rangle^{\frac{m}{2}}M^{k-1}_{2\tilde s}\left(\langle v\rangle^{-m}\left[M_{2\tilde s}, \langle v\rangle^{m}\right]u\right)\\
            &\quad+\langle v\rangle^{-\frac{m}{2}}\left[M^{k-1}_{2\tilde s}, \langle v\rangle^{m}\right]\left(\langle v\rangle^{-m}\left[M_{2\tilde s}, \langle v\rangle^{m}\right]u\right)\\
            &=\langle v\rangle^{-\frac{m}{2}}\left[M^{k-1}_{2\tilde s}, \langle v\rangle^{m}\right]M_{2\tilde s}u+\langle v\rangle^{\frac{m}{2}}M^{k-1}_{2\tilde s}Au+\langle v\rangle^{-\frac{m}{2}}\left[M^{k-1}_{2\tilde s}, \langle v\rangle^{m}\right]Au,
        \end{split}
     \end{equation}
     here $A(v, D_{x}, D_{v})=\langle v\rangle^{-m}\left[M_{2\tilde s}, \langle v\rangle^{m}\right]$. Then we have
     \begin{equation*}
        \begin{split}
            &\left\|\langle v\rangle^{-\frac{m}{2}}\left[M^{k}_{2\tilde s}, \langle v\rangle^{m}\right]u\right\|_{L^{2}(\mathbb R^{6}_{x, v})}\\
            &\le\left\|\langle v\rangle^{-\frac{m}{2}}\left[M^{k-1}_{2\tilde s}, \langle v\rangle^{m}\right]M_{2\tilde s}u\right\|_{L^{2}(\mathbb R^{6}_{x, v})}+\left\|\langle v\rangle^{\frac{m}{2}}M^{k-1}_{2\tilde s}Au\right\|_{L^{2}(\mathbb R^{6}_{x, v})}\\
            &\quad+\left\|\langle v\rangle^{-\frac{m}{2}}\left[M^{k-1}_{2\tilde s}, \langle v\rangle^{m}\right]Au\right\|_{L^{2}(\mathbb R^{6}_{x, v})}=J_{1}+J_{2}+J_{3}.
        \end{split}
     \end{equation*}
     Now, we consider the term $J_{1}$. Since $u\in H^{k}(\mathbb R^{3}_{x}; \mathcal S(\mathbb R^{3}_{v}))$, it follows that $M_{2\tilde s}u\in H^{k-2\tilde s}(\mathbb R^{3}_{x}; \mathcal S(\mathbb R^{3}_{v}))\subset H^{k-1}(\mathbb R^{3}_{x}; \mathcal S(\mathbb R^{3}_{v}))$, then by using \eqref{M-j},
     \begin{equation*}
     \begin{split}
        J_{1}&\le\sum_{p=0}^{k-2}\left(C_{2}(t+1)\right)^{k-1-p}C_{k-1}^{p}\left\|\langle v\rangle^{\frac{m}{2}}M^{p+1}_{2\tilde s}u\right\|_{L^{2}(\mathbb R^{6}_{x, v})}\\
        &=\sum_{p=1}^{k-1}\left(C_{2}(t+1)\right)^{k-p}\frac{(k-1)!}{(p-1)!(k-p)!}\left\|\langle v\rangle^{\frac{m}{2}}M^{p}_{2\tilde s}u\right\|_{L^{2}(\mathbb R^{6}_{x, v})}.
     \end{split}
     \end{equation*}
For the term $J_{2}$, noting that
     \begin{equation*}
        \begin{split}
            J_{2}&=\left\|\langle v\rangle^{\frac{m}{2}}M^{k-1}_{2\tilde s}AM^{-(k-1)}_{2\tilde s}\langle v\rangle^{\frac{-m}{2}}\langle v\rangle^{\frac{m}{2}}M^{k-1}_{2\tilde s}u\right\|_{L^{2}(\mathbb R^{6}_{x, v})},
            %&\le\left\|M^{k-1}_{2\tilde s}AM^{-(k-1)}_{2\tilde s}\left(\langle v\rangle^{\frac{m}{2}}M^{k-1}_{2\tilde s}u\right)\right\|_{L^{2}(\mathbb R^{6}_{x, v})}\\
            %&\quad+\left\|\langle v\rangle^{\frac{m}{2}}\left[M^{k-1}_{2\tilde s}AM^{-(k-1)}_{2\tilde s}, \langle v\rangle^{\frac{-m}{2}}\right]\left(\langle v\rangle^{\frac{m}{2}}M^{k-1}_{2\tilde s}u\right)\right\|_{L^{2}(\mathbb R^{6}_{x, v})}.
        \end{split}
     \end{equation*}
     as the argument in the operator $R_{t, j}(v, D_{x}, D_{v})$, we can obtain that $A(v, D_{x}, D_{v})$ is the pseudo-differential operator of order $2\tilde s-1$. Since $2\tilde s-1\le0$, it follows that $A(v, D_{x}, D_{v})$ is the pseudo-differential operator of order $0$. Since $M^{k-1}_{2\tilde s}(t, D_{x}, D_{v})$ is a pseudo-differential operator of order $2\tilde s(k-1)$ and $M^{-(k-1)}_{2\tilde s}(t, D_{x}, D_{v})$ is the pseudo-differential operator of order $-2\tilde s(k-1)$, it follows that $M^{k-1}_{2\tilde s}AM^{-(k-1)}_{2\tilde s}$ is the pseudo-differential operator of order $0$. %this implies
     %$$\langle v\rangle^{\frac{m}{2}}\left[M^{k-1}_{2\tilde s}AM^{-(k-1)}_{2\tilde s}, \langle v\rangle^{\frac{-m}{2}}\right]$$
     %is also the pseudo-differential operator of order $0$.
     Since $u\in H^{k}(\mathbb R^{3}_{x}; \mathcal S(\mathbb R^{3}_{v}))$, one has
$\langle v\rangle^{\frac{m}{2}}M^{k-1}_{2\tilde s}u\in H^{k-2\tilde s(k-1)}(\mathbb R^{3}_{x}; \mathcal S(\mathbb R^{3}_{x}))\subset L^{2}(\mathbb R^{6}_{x, v})$, then from Lemma 2.2 of~\cite{H-4}
     \begin{equation*}
        \begin{split}
            J_{2}%&=\left\|\langle v\rangle^{\frac{m}{2}}M^{k-1}_{2\tilde s}AM^{-(k-1)}_{2\tilde s}\langle v\rangle^{\frac{-m}{2}}\langle v\rangle^{\frac{m}{2}}M^{k-1}_{2\tilde s}u\right\|_{L^{2}(\mathbb R^{6}_{x, v})}\\
            &\le\left\|M^{k-1}_{2\tilde s}AM^{-(k-1)}_{2\tilde s}\left(\langle v\rangle^{\frac{m}{2}}M^{k-1}_{2\tilde s}u\right)\right\|_{L^{2}(\mathbb R^{6}_{x, v})}\\
            %&\quad+\left\|\langle v\rangle^{\frac{m}{2}}\left[M^{k-1}_{2\tilde s}AM^{-(k-1)}_{2\tilde s}, \langle v\rangle^{\frac{-m}{2}}\right]\left(\langle v\rangle^{\frac{m}{2}}M^{k-1}_{2\tilde s}u\right)\right\|_{L^{2}(\mathbb R^{6}_{x, v})}\\
            &\le C_{3}(t+1)\left\|\langle v\rangle^{\frac{m}{2}}M^{k-1}_{2\tilde s}u\right\|_{L^{2}(\mathbb R^{6}_{x, v})}.
        \end{split}
     \end{equation*}
     For the term $J_{3}$, since $u\in H^{k}(\mathbb R^{3}_{x}; \mathcal S(\mathbb R^{3}_{v}))$ and $A(v, D_{x}, D_{v})$ is the pseudo-differential operator of order $2\tilde s-1$, we have $Au\in H^{k}(\mathbb R^{3}_{x}; \mathcal S(\mathbb R^{3}_{v}))\subset H^{k-1}(\mathbb R^{3}_{x}; \mathcal S(\mathbb R^{3}_{v}))$, from \eqref{M-j}, we can obtain
     \begin{equation*}
     \begin{split}
        J_{3}&\le\sum_{p=0}^{k-2}\left(C_{2}(t+1)\right)^{k-1-p}C_{k-1}^{p}\left\|\langle v\rangle^{\frac{m}{2}}M^{p}_{2\tilde s}Au\right\|_{L^{2}(\mathbb R^{6}_{x, v})},
     \end{split}
     \end{equation*}
     then by using the result of $J_{2}$, we have
     \begin{equation*}
     \begin{split}
        J_{3}&\le C_{3}(t+1)\sum_{p=0}^{k-2}\left(C_{2}(t+1)\right)^{k-1-p}C_{k-1}^{p}\left\|\langle v\rangle^{\frac{m}{2}}M^{p}_{2\tilde s}u\right\|_{L^{2}(\mathbb R^{6}_{x, v})}.
     \end{split}
     \end{equation*}
     Taking $C_{2}\ge C_{3}$, we can get that for any $1\le p\le k-2$
     \begin{equation*}
        \begin{split}
             &\left(C_{2}(t+1)\right)^{k-p}\frac{(k-1)!}{(p-1)!(k-p)!}+C_{3}(t+1)\left(C_{2}(t+1)\right)^{k-1-p}\frac{(k-1)!}{p!(k-1-p)!}\\
             %&\le\left(C_{2}(t+1)\right)^{k-p}\frac{(k-1)!\sqrt{k}}{p!(k+1-p)!}\sqrt{(k-p)!}\left(\frac{p}{k-1}+\frac{C_{3}}{C_{2}}\frac{k+1-p}{k-1}\right)\\
             &\le\left(C_{2}(t+1)\right)^{k-p}\frac{k!}{p!(k-p)!}\left(\frac{p}{k}+\frac{C_{3}}{C_{2}}\frac{k-p}{k}\right)\le\left(C_{2}(t+1)\right)^{k-p}C_{k}^{p}.
        \end{split}
     \end{equation*}
     For $p=0$, taking $C_{2}\ge C_{3}$, one has
     \begin{equation*}
        \begin{split}
             &C_{3}(t+1)\left(C_{2}(t+1)\right)^{k-1}\le\left(C_{2}(t+1)\right)^{k}.
        \end{split}
     \end{equation*}
     For $p=k-1$, taking $C_{2}\ge C_{3}$,
     \begin{equation*}
     \begin{split}
          &(k-1)C_{2}(t+1)+C_{3}(t+1)\le kC_{2}(t+1).
     \end{split}
     \end{equation*}
     Combining these results of coefficients and taking $C_{2}\ge C_{3}$, we can obtain that
     \begin{equation*}
        \begin{split}
            &\left\|\langle v\rangle^{-\frac{m}{2}}\left[M^{k}_{2\tilde s}, \langle v\rangle^{m}\right]u\right\|_{L^{2}(\mathbb R^{6}_{x, v})}\le\sum_{p=0}^{k-1}\left(C_{2}(t+1)\right)^{k-p}C_{k}^{p}\left\|\langle v\rangle^{\frac{m}{2}}M^{p}_{2\tilde s}u\right\|_{L^{2}(\mathbb R^{6}_{x, v})}.
        \end{split}
     \end{equation*}
     here $C_{2}$ depends on $m$ and $s$.
\end{proof}
Let $0<s<1$ and $\gamma>-2s$, set $A_{k}=\langle v\rangle^{-\gamma/2}\left[M^{k}_{2\tilde s}, \langle v\rangle^{\gamma}\right]$, then
\begin{lemma}\label{A-k}
     Let $r\in\mathbb R$, then there exists $C_{4}>0$, depends on $r, s$ and $\gamma$, such that for all $k\in\mathbb N_{+}$
     \begin{equation*}%\label{A}
     \begin{split}
          &\left\|\langle D_{v}\rangle^{r}A_{k}u\right\|_{L^{2}(\mathbb R^{6}_{x, v})}=\left\|\langle D_{v}\rangle^{r}\left(\langle v\rangle^{-\gamma/2}\left[M^{k}_{2\tilde s}, \langle v\rangle^{\gamma}\right]u\right)\right\|_{L^{2}(\mathbb R^{6}_{x, v})}\\
          &\le\sum_{p=0}^{k-1}\left(C_{4}(t+1)\right)^{k-p}C_{k}^{p}\left\|\langle D_{v}\rangle^{r}\left(\langle v\rangle^{\frac\gamma2}M^{p}_{2\tilde s}u\right)\right\|_{L^{2}(\mathbb R^{6}_{x, v})}, \ \forall u\in H^{k}(\mathbb R^{3}_{x}; \mathcal S(\mathbb R^{3}_{v})).
     \end{split}
     \end{equation*}
\end{lemma}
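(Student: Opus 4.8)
The plan is to run the very same induction on $k$ as in the proof of Lemma~\ref{commutator M}, carrying the Fourier multiplier $\langle D_{v}\rangle^{r}$ through every step. Two observations make this work: $\langle D_{v}\rangle^{r}$ commutes with $M^{k}_{2\tilde s}(t,D_{x},D_{v})$, since both are Fourier multipliers; and conjugating a bounded ($=$ order $\le 0$) pseudo-differential operator by $\langle D_{v}\rangle^{r}$, a $v$-Fourier multiplier, again yields a bounded operator with comparable norm. Granting these, the whole proof reduces to one weighted estimate, which I call $(\star)$, together with the identical coefficient bookkeeping already carried out at the end of the proof of Lemma~\ref{commutator M}.

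Set $A=\langle v\rangle^{-\gamma}[M_{2\tilde s},\langle v\rangle^{\gamma}]$, as in that proof, so that the last line of \eqref{M-k-s} with $m=\gamma$ reads $A_{k}u=A_{k-1}(M_{2\tilde s}u)+\langle v\rangle^{\gamma/2}M^{k-1}_{2\tilde s}Au+A_{k-1}(Au)$, and for $k=1$ it degenerates to $A_{1}u=\langle v\rangle^{\gamma/2}Au$. The estimate $(\star)$ I need is: there exists $C_{3}>0$, depending only on $r,s,\gamma$, such that for every $p\in\mathbb N$ and every $u\in H^{k}(\mathbb R^{3}_{x};\mathcal S(\mathbb R^{3}_{v}))$ with $p\le k-1$,
\[
 \big\|\langle D_{v}\rangle^{r}\big(\langle v\rangle^{\gamma/2}M^{p}_{2\tilde s}Au\big)\big\|_{L^{2}(\mathbb R^{6}_{x,v})}\le C_{3}(t+1)\big\|\langle D_{v}\rangle^{r}\big(\langle v\rangle^{\gamma/2}M^{p}_{2\tilde s}u\big)\big\|_{L^{2}(\mathbb R^{6}_{x,v})};
\]
all quantities here are finite because $2\tilde s\le1$ gives $\langle v\rangle^{\gamma/2}M^{p}_{2\tilde s}u\in H^{k-2\tilde s p}(\mathbb R^{3}_{x};\mathcal S(\mathbb R^{3}_{v}))\subset L^{2}$ for $p\le k-1$, and $\langle D_{v}\rangle^{r}$ preserves $\mathcal S(\mathbb R^{3}_{v})$. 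To prove $(\star)$ I write
\[
 \langle D_{v}\rangle^{r}\langle v\rangle^{\gamma/2}M^{p}_{2\tilde s}Au=\Big(\langle D_{v}\rangle^{r}\langle v\rangle^{\gamma/2}M^{p}_{2\tilde s}AM^{-p}_{2\tilde s}\langle v\rangle^{-\gamma/2}\langle D_{v}\rangle^{-r}\Big)\,\langle D_{v}\rangle^{r}\langle v\rangle^{\gamma/2}M^{p}_{2\tilde s}u,
\]
and argue exactly as for the term $J_{2}$ in the proof of Lemma~\ref{commutator M} that the operator in parentheses is bounded on $L^{2}$ with norm $\le C_{3}(t+1)$ \emph{uniformly in $p$}: $A$ is a classical pseudo-differential operator of order $2\tilde s-1\le0$, hence bounded, and carries a factor $t$; conjugating it by $M^{p}_{2\tilde s}\sim t^{p}(1-\Delta_{v}-t^{2}\Delta_{x})^{\tilde s p}$ keeps it of order $2\tilde s-1$ with $p$-independent constants by Lemma~2.2 of~\cite{H-4}; and the two remaining conjugations, by the $v$-weight $\langle v\rangle^{\gamma/2}$ and the $v$-Fourier multiplier $\langle D_{v}\rangle^{r}$, are routine pseudo-differential calculus that changes neither the order nor the uniformity in $p$.

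With $(\star)$ in hand the induction closes just as in Lemma~\ref{commutator M}. For $k=1$, $(\star)$ with $p=0$ gives $\|\langle D_{v}\rangle^{r}A_{1}u\|_{L^{2}}\le C_{3}(t+1)\|\langle D_{v}\rangle^{r}(\langle v\rangle^{\gamma/2}u)\|_{L^{2}}$, which is the claim once $C_{4}\ge C_{3}$. For $k\ge2$, apply $\langle D_{v}\rangle^{r}$ to the recursion and split the norm as $\tilde J_{1}+\tilde J_{2}+\tilde J_{3}$. To $\tilde J_{1}=\|\langle D_{v}\rangle^{r}A_{k-1}(M_{2\tilde s}u)\|_{L^{2}}$ and $\tilde J_{3}=\|\langle D_{v}\rangle^{r}A_{k-1}(Au)\|_{L^{2}}$ apply the induction hypothesis for $k-1$ (legitimate since $M_{2\tilde s}u,\,Au\in H^{k}\subset H^{k-1}$), then apply $(\star)$ to each summand of the resulting bound for $\tilde J_{3}$; to $\tilde J_{2}=\|\langle D_{v}\rangle^{r}\langle v\rangle^{\gamma/2}M^{k-1}_{2\tilde s}Au\|_{L^{2}}$ apply $(\star)$ with $p=k-1$. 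Collecting the coefficient of $\|\langle D_{v}\rangle^{r}(\langle v\rangle^{\gamma/2}M^{p}_{2\tilde s}u)\|_{L^{2}}$ produces, verbatim, the binomial inequalities verified at the end of the proof of Lemma~\ref{commutator M} with $C_{2}$ replaced by $C_{4}$, namely, for $1\le p\le k-2$,
\[
 \big(C_{4}(t+1)\big)^{k-p}\frac{(k-1)!}{(p-1)!(k-p)!}+C_{3}(t+1)\big(C_{4}(t+1)\big)^{k-1-p}\frac{(k-1)!}{p!(k-1-p)!}\le\big(C_{4}(t+1)\big)^{k-p}C_{k}^{p},
\]
together with the endpoint cases $p=0$ and $p=k-1$, all holding once $C_{4}\ge C_{3}$; this yields the assertion with $C_{4}$ depending only on $r,s,\gamma$. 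The single real obstacle is $(\star)$, and within it the $p$-uniformity of the bound for $\langle D_{v}\rangle^{r}\langle v\rangle^{\gamma/2}M^{p}_{2\tilde s}AM^{-p}_{2\tilde s}\langle v\rangle^{-\gamma/2}\langle D_{v}\rangle^{-r}$: this uniformity is exactly what lets $\tilde J_{1},\tilde J_{2},\tilde J_{3}$ recombine into one sum with the growth of Lemma~\ref{commutator M}, and it rests on the fine structure of $M_{2\tilde s}\sim t(1-\Delta_{v}-t^{2}\Delta_{x})^{\tilde s}$ through Lemma~2.2 of~\cite{H-4}; the commutation of $\langle D_{v}\rangle^{r}$ with $M^{k}_{2\tilde s}$, the boundedness under the extra conjugations, the finiteness via $2\tilde s\le1$, and the combinatorics are already contained in the preceding lemmas.
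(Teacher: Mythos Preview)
Your proposal is correct and follows essentially the same approach as the paper: the same recursion $A_{k}u=A_{k-1}(M_{2\tilde s}u)+\langle v\rangle^{\gamma/2}M^{k-1}_{2\tilde s}Au+A_{k-1}(Au)$ from \eqref{M-k-s}, the same induction, and the same combinatorics as in Lemma~\ref{commutator M}, with your key estimate $(\star)$ corresponding exactly to the paper's bound on $Q_{2}$ via Lemma~2.2 of~\cite{H-4}. Your explicit emphasis on the $p$-uniformity of the constant in $(\star)$ is a welcome clarification of a point the paper leaves implicit when it chooses $C_{4}\ge\tilde C_{5}$ independently of $k$.
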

\begin{proof}
    We would show it holds by induction on $k$. For $k=1$, as the argument in $R_{t, j}(v, D_{x}, D_{v})$, we have $\langle v\rangle^{-\frac\gamma2}A_{1}=\langle v\rangle^{-\gamma}\left[M_{2\tilde s}, \langle v\rangle^{\gamma}\right]$ is the pseudo-differential operator of order $0$. From $u\in H^{1}(\mathbb R^{3}_{x}; \mathcal S(\mathbb R^{3}_{v}))\in L^{2}(\mathbb R^{3}_{x}; \mathcal S(\mathbb R^{3}_{v}))$, it follows that $\langle v\rangle^{\frac\gamma2}\langle D_{v}\rangle^{r}\left(\langle v\rangle^{-\frac\gamma2}A_{1}u\right)\in L^{2}(\mathbb R^{6}_{x, v})$, so that from Lemma 2.2 of~\cite{H-4}, we have
     \begin{equation*}
     \begin{split}
          &\left\|\langle D_{v}\rangle^{r}A_{1}u\right\|_{L^{2}(\mathbb R^{6}_{x, v})}=\left\|\langle D_{v}\rangle^{r}\left(\langle v\rangle^{\frac\gamma2}\langle v\rangle^{-\frac\gamma2}A_{1}u\right)\right\|_{L^{2}(\mathbb R^{6}_{x, v})}\\
          &\lesssim\left\|\langle v\rangle^{\frac\gamma2}\langle D_{v}\rangle^{r}\left(\langle v\rangle^{-\frac\gamma2}A_{1}u\right)\right\|_{L^{2}(\mathbb R^{6}_{x, v})}\lesssim\left\|\left(\langle v\rangle^{-\frac\gamma2}A_{1}\right)\langle D_{v}\rangle^{r}\left(\langle v\rangle^{\frac\gamma2}u\right)\right\|_{L^{2}(\mathbb R^{6}_{x, v})}\\
          &\le C_{4}(t+1)\left\|\langle D_{v}\rangle^{r}\left(\langle v\rangle^{\frac\gamma2}u\right)\right\|_{L^{2}(\mathbb R^{6}_{x, v})},
          %&=\frac12C_{4}(t+1)\left\|\langle D_{v}\rangle^{r}\left(\langle v\rangle^{\frac\gamma2}u\right)\right\|_{L^{2}(\mathbb R^{6}_{x, v})},
     \end{split}
     \end{equation*}
    with $C_{4}$ depends on $r, s$ and $\gamma$. Assume that $k\ge2$ and for any $1\le j\le k-1$
    \begin{equation*}%\label{A-j}
     \begin{split}
          &\left\|\langle D_{v}\rangle^{r}A_{j}u\right\|_{L^{2}}\\
          &\le\sum_{p=0}^{j-1}\left(C_{4}(t+1)\right)^{j-p}C_{j}^{p}\left\|\langle D_{v}\rangle^{r}\left(\langle v\rangle^{\frac\gamma2}M^{p}_{2\tilde s}u\right)\right\|_{L^{2}}, \quad \forall u\in H^{j}(\mathbb R^{3}_{x}; \mathcal S(\mathbb R^{3}_{v})).
     \end{split}
     \end{equation*}
     From \eqref{M-k-s}, we have
     \begin{equation*}
     \begin{split}
         \langle D_{v}\rangle^{r}A_{k}u&=\langle D_{v}\rangle^{r}A_{k-1}M_{2\tilde s}u+\langle D_{v}\rangle^{r}\left(\langle v\rangle^{\frac\gamma2}M^{k-1}_{2\tilde s}Au\right)+\langle D_{v}\rangle^{r}A_{k-1}Au\\
         &=Q_{1}+Q_{2}+Q_{3}.
     \end{split}
     \end{equation*}
     with $A=\langle v\rangle^{-\gamma}\left[M_{2\tilde s}, \langle v\rangle^{\gamma}\right]$ is the pseudo-differential operator of order $2\tilde s-1$.

     Since $u\in H^{k}(\mathbb R^{3}_{x}; \mathcal S(\mathbb R^{3}_{v}))$, we have $M_{2\tilde s}u\in H^{k-2\tilde s}(\mathbb R^{3}_{x}; \mathcal S(\mathbb R^{3}_{v}))\subset H^{k-1}(\mathbb R^{3}_{x}; \mathcal S(\mathbb R^{3}_{v}))$, then using the induction hypothesis, it follows that
     \begin{equation*}
     \begin{split}
          \left\|Q_{1}\right\|_{L^{2}(\mathbb R^{6}_{x, v})}&\le\sum_{p=0}^{k-2}\left(C_{4}(t+1)\right)^{k-1-p}C_{k-1}^{p}\left\|\langle D_{v}\rangle^{r}\left(\langle v\rangle^{\frac\gamma2}M^{p+1}_{2\tilde s}u\right)\right\|_{L^{2}(\mathbb R^{6}_{x, v})}\\
          &=\sum_{p=1}^{k-1}\left(C_{4}(t+1)\right)^{k-p}\frac{(k-1)!}{(p-1)!(k-p)!}\left\|\langle D_{v}\rangle^{r}\left(\langle v\rangle^{\frac\gamma2}M^{p}_{2\tilde s}u\right)\right\|_{L^{2}(\mathbb R^{6}_{x, v})}.
     \end{split}
     \end{equation*}
     For $Q_{2}$, since $M^{k-1}_{2\tilde s}AM^{-(k-1)}_{2\tilde s}$ is the pseudo-differential operator of order $0$, from $u\in H^{k}(\mathbb R^{3}_{x}; \mathcal S(\mathbb R^{3}_{v}))$, one has
$\langle D_{v}\rangle^{r}(\langle v\rangle^{\frac{m}{2}}M^{k-1}_{2\tilde s}u)\in L^{2}(\mathbb R^{6}_{x, v})$, then using Lemma 2.2 of~\cite{H-4}, we can get that
     \begin{equation*}
     \begin{split}
          \left\|Q_{2}\right\|_{L^{2}(\mathbb R^{6}_{x, v})}&\lesssim\left\|\langle v\rangle^{\frac\gamma2}\langle D_{v}\rangle^{r}M^{k-1}_{2\tilde s}AM^{-(k-1)}_{2\tilde s}M^{k-1}_{2\tilde s}u\right\|_{L^{2}(\mathbb R^{6}_{x, v})}\\
          &\le C_{5}\left\|M^{k-1}_{2\tilde s}AM^{-(k-1)}_{2\tilde s}\langle D_{v}\rangle^{r}\left(\langle v\rangle^{\frac\gamma2}M^{k-1}_{2\tilde s}u\right)\right\|_{L^{2}(\mathbb R^{6}_{x, v})}\\
          &\le\tilde C_{5}(t+1)\left\|\langle D_{v}\rangle^{r}\left(\langle v\rangle^{\frac\gamma2}M^{k-1}_{2\tilde s}u\right)\right\|_{L^{2}(\mathbb R^{6}_{x, v})}.
     \end{split}
     \end{equation*}
     For $Q_{3}$, since $u\in H^{k}(\mathbb R^{3}_{x}; \mathcal S(\mathbb R^{3}_{v}))$, one has $Au\in H^{k}(\mathbb R^{3}_{x}; \mathcal S(\mathbb R^{3}_{v}))\subset H^{k-1}(\mathbb R^{3}_{x}; \mathcal S(\mathbb R^{3}_{v}))$, from the induction hypothesis and the result of $Q_{2}$, it follows that
     \begin{equation*}
     \begin{split}
          \left\|Q_{3}\right\|_{L^{2}(\mathbb R^{6}_{x, v})}&\le\tilde C_{5}(t+1)\sum_{p=0}^{k-2}\left(C_{4}(t+1)\right)^{k-1-p}C_{k-1}^{p}\left\|\langle D_{v}\rangle^{r}\left(\langle v\rangle^{\frac\gamma2}M^{p}_{2\tilde s}u\right)\right\|_{L^{2}(\mathbb R^{6}_{x, v})}.
     \end{split}
     \end{equation*}
     Combining these results and taking $C_{4}\ge\tilde C_{5}$, we can get that
     \begin{equation*}
     \begin{split}
          &\left\|\langle D_{v}\rangle^{r}A_{k}u\right\|_{L^{2}(\mathbb R^{6}_{x, v})}\le\sum_{p=0}^{k-1}\left(C_{4}(t+1)\right)^{k-p}C_{k}^{p}\left\|\langle D_{v}\rangle^{r}\left(\langle v\rangle^{\frac\gamma2}M^{p}_{2\tilde s}u\right)\right\|_{L^{2}(\mathbb R^{6}_{x, v})}.
     \end{split}
     \end{equation*}
\end{proof}

We also need the following interpolations, which have been proved in(~\cite{X-1, H-4}).
\begin{lemma}(~\cite{X-1})\label{interpolation}
     Let $k,l>0$ and $\delta>0$, then for any $\epsilon>0$ there exists a constant $C_{\epsilon}>0$ such that for any $u\in\mathcal S(\mathbb R^{d})$
     $$\|u\|_{H^{k}_{l}(\mathbb R^{d})}\le\epsilon\|u\|_{H^{k+\delta}_{l}(\mathbb R^{d})}+C_{\epsilon}\|u\|_{L^{2}(\mathbb R^{d})}.$$
\end{lemma}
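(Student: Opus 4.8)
The plan is to deduce the weighted inequality from the classical (unweighted) Sobolev interpolation along the scale $H^{k}\hookrightarrow H^{k+\delta}$, and the latter from a single frequency cut-off. First I would treat the case $l=0$: for $f\in\mathcal S(\mathbb R^{d})$ and $R>0$, split on the Fourier side,
\begin{equation*}
\langle\xi\rangle^{2k}=\langle\xi\rangle^{2k}\mathbf 1_{\{\langle\xi\rangle\le R\}}+\langle\xi\rangle^{2k}\mathbf 1_{\{\langle\xi\rangle>R\}}\le R^{2k}+R^{-2\delta}\langle\xi\rangle^{2(k+\delta)},
\end{equation*}
so that Plancherel's theorem gives $\|f\|_{H^{k}}^{2}\le R^{2k}\|f\|_{L^{2}}^{2}+R^{-2\delta}\|f\|_{H^{k+\delta}}^{2}$. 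Given $\epsilon>0$, choosing $R=\epsilon^{-1/\delta}$ and using $\sqrt{a+b}\le\sqrt a+\sqrt b$ yields $\|f\|_{H^{k}}\le\epsilon\|f\|_{H^{k+\delta}}+\epsilon^{-k/\delta}\|f\|_{L^{2}}$, which is the assertion with $l=0$ and $C_{\epsilon}=\epsilon^{-k/\delta}$.

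Next I would reinstate the velocity weight by substitution. Writing the weighted norm as $\|u\|_{H^{k}_{l}}=\|\langle v\rangle^{-l}u\|_{H^{k}(\mathbb R^{d})}$ and setting $g=\langle v\rangle^{-l}u\in\mathcal S(\mathbb R^{d})$, one has $\|u\|_{H^{k}_{l}}=\|g\|_{H^{k}}$ and $\|u\|_{H^{k+\delta}_{l}}=\|g\|_{H^{k+\delta}}$, so applying the case $l=0$ to $g$ gives
\begin{equation*}
\|u\|_{H^{k}_{l}}\le\epsilon\|u\|_{H^{k+\delta}_{l}}+C_{\epsilon}\left\|\langle v\rangle^{-l}u\right\|_{L^{2}(\mathbb R^{d})}\le\epsilon\|u\|_{H^{k+\delta}_{l}}+C_{\epsilon}\|u\|_{L^{2}(\mathbb R^{d})},
\end{equation*}
the last inequality because $l>0$ forces $\langle v\rangle^{-l}\le1$ pointwise. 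This is the claimed estimate.

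I expect the main (indeed essentially the only) obstacle to be the bookkeeping between the weight and the fractional operator $\langle D_{v}\rangle^{k}$ when $k\notin\mathbb N$, in case the weighted norm is to be read with $\langle v\rangle^{-l}$ placed outside the derivatives, $\|u\|_{H^{k}_{l}}=\|\langle v\rangle^{-l}\langle D_{v}\rangle^{k}u\|_{L^{2}}$. Then the substitution $g=\langle v\rangle^{-l}u$ produces the operator $P:=\langle v\rangle^{-l}\langle D_{v}\rangle^{k}\langle v\rangle^{l}$, which by the commutator estimate of Lemma~\ref{lemma2.2}, iterated through the symbolic calculus exactly as in the proof of Lemma~\ref{commutator M}, is an elliptic pseudo-differential operator of order $k$ with principal symbol $\langle\xi\rangle^{k}$, the remainder $P-\langle D_{v}\rangle^{k}$ being of order $k-1$. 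Consequently $\|Pg\|_{L^{2}}\le C\|g\|_{H^{k}}+C\|g\|_{H^{\max\{k-1,0\}}}$, and both contributions are dominated by $\epsilon\|g\|_{H^{k+\delta}}+C_{\epsilon}\|g\|_{L^{2}}$ after the unweighted interpolation of the first paragraph (taking $R$ slightly larger to absorb the lower-order term), so the argument closes as before. Note that the hypothesis $l>0$ is used precisely to guarantee $\langle v\rangle^{-l}\le1$, which is what keeps the error term in the \emph{unweighted} $L^{2}$ rather than only in $L^{2}_{l}$; this is the one delicate feature, the remainder of the proof being the routine frequency truncation and symbolic calculus already present in the preceding lemmas.
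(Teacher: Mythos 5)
Your first paragraph (the unweighted frequency-splitting argument) is correct, but the reduction of the weighted case to it is where the proof breaks down, and it breaks down on the sign of the weight. In this paper $H^{k}_{l}$ carries the \emph{growing} weight $\langle v\rangle^{+l}$: see the definition at the beginning of Section 3, $\|h\|^{2}_{H^{m}_{r}}=\sum_{|\alpha|\le m}\|\langle v\rangle^{r}\partial^{\alpha}_{v}h\|^{2}_{L^{2}}$ with $r\ge0$, and every invocation of the lemma (e.g.\ bounding $\|\langle\cdot\rangle^{\gamma/2+r}\partial^{\alpha}_{x}g\|$ by $\epsilon\|\langle\cdot\rangle^{\gamma/2+r}\partial^{\alpha}_{x}g\|_{H^{s}}+C_{\epsilon}\|\langle\cdot\rangle^{r}\partial^{\alpha}_{x}g\|_{L^{2}}$ in the proof of Lemma \ref{lemma3.1}). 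You read the norm as $\|\langle v\rangle^{-l}u\|_{H^{k}}$; with that convention your substitution works, but it proves a different and essentially vacuous statement. With the paper's convention, setting $g=\langle v\rangle^{l}u$ and applying the unweighted interpolation produces the error term $C_{\epsilon}\|\langle v\rangle^{l}u\|_{L^{2}}$, and since $l>0$ this is $\ge\|u\|_{L^{2}}$, so the final inequality you invoke goes the wrong way. The content of the lemma is precisely that a growing weight can be \emph{discarded} at the price of $\delta$ extra derivatives --- an uncertainty-principle type trade-off between decay and regularity --- and this cannot follow from the pointwise bound $\langle v\rangle^{-l}\le1$. Your third paragraph on pseudo-differential bookkeeping does not touch this issue.

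Two further remarks. First, there is no proof in the paper to compare against: the lemma is quoted from \cite{X-1}. Second, as literally stated --- with the \emph{same} weight $l$ on $H^{k}_{l}$ and $H^{k+\delta}_{l}$ and no weight on the $L^{2}$ term --- the inequality fails for small $\epsilon$: testing on translates $u_{R}(v)=\chi(v-Re_{1})$ of a fixed bump gives $\|u_{R}\|_{H^{k}_{l}}\sim c_{k}R^{l}$, $\|u_{R}\|_{H^{k+\delta}_{l}}\sim c_{k+\delta}R^{l}$ and $\|u_{R}\|_{L^{2}}$ constant, so once $\epsilon<c_{k}/c_{k+\delta}$ the constant $C_{\epsilon}$ would have to grow like $R^{l}$. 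The version that is actually provable, and consistent with Lemma \ref{interpolation1} (from which it follows by Young's inequality when $\delta\ge k$), increases the weight on the high-regularity side, e.g.\ $\|u\|_{H^{k}_{l}}\le\epsilon\|u\|_{H^{k+\delta}_{2l}}+C_{\epsilon}\|u\|_{L^{2}}$. Any correct argument must build in that gain of weight against gain of regularity; yours does not, so the gap is not repairable by adjusting the commutator estimates alone.
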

\begin{lemma}(~\cite{H-4})\label{interpolation1}
    Let $k,l\in\mathbb R$ and $\delta>0$, then there exists a constant $C(k,l,\delta)$ such that for any $u\in\mathcal S(\mathbb R^{d})$,
    $$\|u\|^{2}_{H^{k}_{l}(\mathbb R^{d})}\le C(k,l,\delta)\|u\|_{H^{k+\delta}_{2l}(\mathbb R^{d})}\|u\|_{H^{k-\delta}(\mathbb R^{d})}.$$
\end{lemma}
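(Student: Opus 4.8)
The plan is to read Lemma~\ref{interpolation1} as an interpolation inequality on the weighted Sobolev scale $H^{k}_{l}(\mathbb R^{d})=\{u:\ \langle\cdot\rangle^{l}u\in H^{k}\}$, $\langle\cdot\rangle^{l}=(1+|\cdot|^{2})^{l/2}$, in which the whole difficulty sits in the polynomial weight. For $l=0$ the claim is the classical estimate $\|u\|_{H^{k}}^{2}\le\|u\|_{H^{k+\delta}}\|u\|_{H^{k-\delta}}$, which is immediate from Plancherel together with the identity $\langle\xi\rangle^{2k}=\langle\xi\rangle^{k+\delta}\langle\xi\rangle^{k-\delta}$ and the Cauchy--Schwarz inequality on the Fourier side. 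One clean way to finish for general $l$ is to quote the classical identification of complex interpolation spaces of weighted Sobolev spaces, $[H^{k-\delta}_{0},H^{k+\delta}_{2l}]_{1/2}=H^{k}_{l}$, since then the inequality is just $\|u\|_{[X_{0},X_{1}]_{1/2}}\le\|u\|_{X_{0}}^{1/2}\|u\|_{X_{1}}^{1/2}$; but I will instead sketch a self-contained argument consistent with the pseudo-differential machinery of Section~2.

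First I would realise the left-hand side as an $L^{2}$-pairing. Writing $\langle D\rangle=(1-\Delta)^{1/2}$ and using that $\langle\cdot\rangle^{l}$ (multiplication by a positive real function) and $\langle D\rangle^{k}$ (a Fourier multiplier) are self-adjoint on $L^{2}$, one has, for $u\in\mathcal S(\mathbb R^{d})$,
\begin{equation*}
\|u\|_{H^{k}_{l}}^{2}=\bigl(\langle D\rangle^{k}\langle\cdot\rangle^{l}u,\ \langle D\rangle^{k}\langle\cdot\rangle^{l}u\bigr)_{L^{2}}=(Pu,u)_{L^{2}},\qquad P:=\langle\cdot\rangle^{l}\langle D\rangle^{2k}\langle\cdot\rangle^{l},
\end{equation*}
with $P$ self-adjoint. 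The point is then to factor $P$ so that one factor is controlled by $\|u\|_{H^{k-\delta}}$ and the other by $\|u\|_{H^{k+\delta}_{2l}}$. I take $T_{1}:=\langle D\rangle^{k-\delta}$ and $T_{2}:=\langle D\rangle^{\delta-k}\langle\cdot\rangle^{l}\langle D\rangle^{2k}\langle\cdot\rangle^{l}$, so that $T_{2}^{\ast}T_{1}=\langle\cdot\rangle^{l}\langle D\rangle^{2k}\langle\cdot\rangle^{l}\langle D\rangle^{\delta-k}\langle D\rangle^{k-\delta}=P$ (powers of $\langle D\rangle$ commute), whence by Cauchy--Schwarz
\begin{equation*}
\|u\|_{H^{k}_{l}}^{2}=(T_{1}u,T_{2}u)_{L^{2}}\le\|T_{1}u\|_{L^{2}}\,\|T_{2}u\|_{L^{2}}=\|u\|_{H^{k-\delta}}\,\bigl\|\langle D\rangle^{\delta-k}\langle\cdot\rangle^{l}\langle D\rangle^{2k}\langle\cdot\rangle^{l}u\bigr\|_{L^{2}}.
\end{equation*}

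It remains to bound $\|T_{2}u\|_{L^{2}}\lesssim\|u\|_{H^{k+\delta}_{2l}}=\|\langle D\rangle^{k+\delta}\langle\cdot\rangle^{2l}u\|_{L^{2}}$, and this is the only non-elementary step; it is precisely a conjugation statement of the type carried out in the proof of Lemma~\ref{lemma2.2}. Since conjugation of a Fourier multiplier by the temperate weight $\langle\cdot\rangle^{l}$ preserves the calculus, $\langle\cdot\rangle^{l}\langle D\rangle^{2k}\langle\cdot\rangle^{-l}=\langle D\rangle^{2k}+R$ with $R\in\Psi_{1,0}^{2k-1}$, whence
\begin{equation*}
\langle D\rangle^{\delta-k}\langle\cdot\rangle^{l}\langle D\rangle^{2k}\langle\cdot\rangle^{l}=\Bigl(I+\langle D\rangle^{\delta-k}R\langle D\rangle^{-(k+\delta)}\Bigr)\langle D\rangle^{k+\delta}\langle\cdot\rangle^{2l},
\end{equation*}
and $\langle D\rangle^{\delta-k}R\langle D\rangle^{-(k+\delta)}\in\Psi_{1,0}^{-1}$ is bounded on $L^{2}$; applying it to $\langle D\rangle^{k+\delta}\langle\cdot\rangle^{2l}u$ yields the required bound. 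Combining the two displays gives $\|u\|_{H^{k}_{l}}^{2}\le C(k,l,\delta)\|u\|_{H^{k+\delta}_{2l}}\|u\|_{H^{k-\delta}}$; the same computation goes through verbatim if $H^{k}_{l}$ is normalised with the weight outermost, only the intermediate operator changing. I expect the one genuine obstacle to be the bookkeeping in this last step — making precise that trading the weight $\langle\cdot\rangle^{2l}$ against $\delta$ extra derivatives costs only an $L^{2}$-bounded (order $\le 0$) operator — while everything else reduces to Plancherel and Cauchy--Schwarz.
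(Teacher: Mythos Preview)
The paper does not supply its own proof of Lemma~\ref{interpolation1}: the result is quoted verbatim from~\cite{H-4} (the citation appears already in the lemma's header), so there is nothing to compare your argument against within this text.

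That said, your proof is correct and self-contained. The decomposition $(Pu,u)=(T_{1}u,T_{2}u)$ with $T_{1}=\langle D\rangle^{k-\delta}$ and $T_{2}=\langle D\rangle^{\delta-k}\langle\cdot\rangle^{l}\langle D\rangle^{2k}\langle\cdot\rangle^{l}$ cleanly isolates the unweighted factor $\|u\|_{H^{k-\delta}}$, and the remaining step $\|T_{2}u\|_{L^{2}}\lesssim\|u\|_{H^{k+\delta}_{2l}}$ is exactly the conjugation estimate $\langle\cdot\rangle^{l}\langle D\rangle^{2k}\langle\cdot\rangle^{-l}=\langle D\rangle^{2k}+R$ with $R\in\Psi_{1,0}^{2k-1}$; your order count $(\delta-k)+(2k-1)-(k+\delta)=-1$ for $\langle D\rangle^{\delta-k}R\langle D\rangle^{-(k+\delta)}$ is right, and the $L^{2}$-boundedness of order-$0$ operators finishes it. This is precisely the mechanism of Lemma~\ref{lemma2.2} (and of Lemma~2.2 in~\cite{H-4}, which is invoked repeatedly in the present paper), so your approach is fully in keeping with the toolkit of Section~2. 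The alternative you mention---identifying $H^{k}_{l}$ as the complex interpolation space $[H^{k-\delta},H^{k+\delta}_{2l}]_{1/2}$---is also standard and gives the inequality with constant~$1$ on the right, but would import more machinery than the paper otherwise uses.
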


\section{Energy Estimates and Smoothing Solution}
In this section, we study the energy estimates of the solution to the Cauchy problem \eqref{1-1}, and we would prove $u\in C^{\infty}([0, T]; H^{+\infty}(\mathbb R^{3}_{x}; \mathcal S(\mathbb R^{3}_{v})))$ under the suitable initial datum. For $r\ge0$ and $m\in\mathbb N$, set
$$\left\|h\right\|^{2}_{L^{2}_{r}(\mathbb R^{n})}=\int_{\mathbb R^{n}}\left|\langle v\rangle^{r}h(v)\right|^{2}dv,\quad \left\|h\right\|^{2}_{H^{m}_{r}(\mathbb R^{n})}=\sum_{|\alpha|\le m}\int_{\mathbb R^{n}}\left|\langle v\rangle^{r}\partial^{\alpha}_{v}h(v)\right|^{2}dv.$$
\begin{lemma}\label{lemma3.1}
    Let $0<s<1$, $\gamma+2s>0$ and $T>0$. Assume $u_0\in H^{\infty}(\mathbb R^{3}_{x}; \mathcal S(\mathbb R^{3}_{v}))$ and $f\in C^{\infty}([0,T]; H^{\infty}(\mathbb R^{3}_{x}; \mathcal S(\mathbb R^{3}_{v})))$, then the Cauchy problem \eqref{1-1} admits a solution $u\in C^{\infty}\left([0, T]; H^{\infty}(\mathbb R^{3}_{x}; \mathcal S(\mathbb R^{3}_{v}))\right)$. Moreover, if $u_0\in L^{2}(\mathbb R^{6}_{x, v})$ and $f\in L^{\infty}([0,T]; L^{2}(\mathbb R^{6}_{x, v}))$, then there exists a constant $C_{0}>0$, depends on $\gamma$ and $s$, such that for any $t\in[0, T]$,
    \begin{equation}\label{3-1}
    \begin{split}
        &\left\|u(t)\right\|^{2}_{L^{2}(\mathbb R^{6}_{x, v})}+\int_{0}^{t}\left\|\langle D_{v}\rangle^{s}\left(\langle v\rangle^{\frac\gamma2}u(\tau)\right)\right\|^{2}_{L^{2}(\mathbb R^{6}_{x, v})}d\tau\\
         &\quad+\int_{0}^{t}\left\|\langle v\rangle^{\frac\gamma2+s}u(\tau)\right\|^{2}_{L^{2}(\mathbb R^6_{x, v})}d\tau\\
         &\le\left(2C_{0}Te^{2TC_{0}}+1\right)\left(2\int_{0}^{t}\left\|f(\tau)\right\|^{2}_{L^{2}(\mathbb R^6_{x, v})}d\tau+\left\|u_{0}\right\|^{2}_{L^{2}(\mathbb R^6_{x, v})}\right).
    \end{split}
    \end{equation}
\end{lemma}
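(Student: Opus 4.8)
The plan is to establish the quantitative estimate \eqref{3-1} by a weighted $L^2$ energy argument and to deduce the claimed smoothness from a priori estimates of the same type at every order, combined with a routine approximation scheme. For the a priori part I would first work with a solution $u$ smooth enough to justify all integrations by parts; testing the equation $\partial_tu+v\cdot\nabla_xu+\langle v\rangle^\gamma\langle D_v\rangle^{2s}u+\langle v\rangle^{\gamma+2s}u=f$ against $u$ in $L^2(\mathbb R^6_{x,v})$ and taking real parts, the transport term $v\cdot\nabla_x$ is skew-adjoint and disappears, leaving
\[
\tfrac12\tfrac{d}{dt}\|u\|_{L^2}^2+\mathrm{Re}\big(\langle v\rangle^\gamma\langle D_v\rangle^{2s}u,u\big)_{L^2}+\|\langle v\rangle^{\gamma/2+s}u\|_{L^2}^2=\mathrm{Re}\,(f,u)_{L^2}.
\]
The point is to rewrite the second term: conjugating the self-adjoint multiplier $\langle D_v\rangle^{2s}$ by the weight $\langle v\rangle^{\gamma/2}$ yields the identity
\[
\big(\langle v\rangle^\gamma\langle D_v\rangle^{2s}u,u\big)_{L^2}=\|\langle D_v\rangle^{s}(\langle v\rangle^{\gamma/2}u)\|_{L^2}^2-\big([\langle D_v\rangle^{2s},\langle v\rangle^{\gamma/2}]u,\langle v\rangle^{\gamma/2}u\big)_{L^2},
\]
so the dissipative norm $\|\langle D_v\rangle^{s}(\langle v\rangle^{\gamma/2}u)\|_{L^2}$ is recovered up to a commutator error.

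Next I would absorb this error. By Lemma \ref{lemma2.2} with $m=\gamma/2$, $r=2s$, together with the Remark: when $0<s\le1/2$, \eqref{0-s-1/2} gives $\|[\langle D_v\rangle^{2s},\langle v\rangle^{\gamma/2}]u\|_{L^2}\lesssim\|\langle v\rangle^{\gamma/2}u\|_{L^2}$; when $1/2<s<1$, \eqref{1/2-s-1} gives $\|[\langle D_v\rangle^{2s},\langle v\rangle^{\gamma/2}]u\|_{L^2}\lesssim\|\langle D_v\rangle^{2s-1}(\langle v\rangle^{\gamma/2}u)\|_{L^2}$ and, since $2s-1<s$, the interpolation Lemma \ref{interpolation} bounds the latter by $\epsilon\|\langle D_v\rangle^{s}(\langle v\rangle^{\gamma/2}u)\|_{L^2}+C_\epsilon\|\langle v\rangle^{\gamma/2}u\|_{L^2}$. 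In every case the commutator error is at most $\epsilon\|\langle D_v\rangle^{s}(\langle v\rangle^{\gamma/2}u)\|_{L^2}^2+C_\epsilon\|\langle v\rangle^{\gamma/2}u\|_{L^2}^2$. Since only $\gamma+2s>0$ is assumed, I would dispose of $\|\langle v\rangle^{\gamma/2}u\|_{L^2}^2$ in two cases: if $\gamma\le0$ then $\langle v\rangle^{\gamma/2}\le1$ and it is $\le\|u\|_{L^2}^2$; if $\gamma>0$, a H\"older/Young interpolation between the weights $\langle v\rangle^{0}$ and $\langle v\rangle^{\gamma/2+s}$ (exponent $\theta=\gamma/(\gamma+2s)\in(0,1)$) gives $\|\langle v\rangle^{\gamma/2}u\|_{L^2}^2\le\epsilon\|\langle v\rangle^{\gamma/2+s}u\|_{L^2}^2+C_\epsilon\|u\|_{L^2}^2$. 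Choosing $\epsilon$ small enough to absorb into the two good terms, and using $|(f,u)_{L^2}|\le\tfrac12\|f\|_{L^2}^2+\tfrac12\|u\|_{L^2}^2$, one reaches
\[
\tfrac{d}{dt}\|u\|_{L^2}^2+\|\langle D_v\rangle^{s}(\langle v\rangle^{\gamma/2}u)\|_{L^2}^2+\|\langle v\rangle^{\gamma/2+s}u\|_{L^2}^2\le 2C_0\|u\|_{L^2}^2+\|f\|_{L^2}^2
\]
with $C_0=C_0(\gamma,s)$. Discarding the dissipation and applying Gronwall gives $\|u(t)\|_{L^2}^2\le e^{2C_0T}\big(\|u_0\|_{L^2}^2+\int_0^t\|f(\tau)\|_{L^2}^2\,d\tau\big)$, and inserting this bound into the time integral of the displayed inequality yields \eqref{3-1} after adjusting the constant.

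For the smoothness claim I would repeat the scheme at higher order. Since the coefficients depend on $v$ only, $\langle D_x\rangle^N$ commutes with every term, so $\langle D_x\rangle^Nu$ solves the same Cauchy problem with data $\langle D_x\rangle^Nu_0$ and source $\langle D_x\rangle^Nf$, giving arbitrary $x$-regularity directly from the basic estimate; for the $v$-regularity and decay I would apply $\langle v\rangle^{M}$ (which commutes with $v\cdot\nabla_x$, $\langle v\rangle^\gamma$ and $\langle v\rangle^{\gamma+2s}$) and $\langle D_v\rangle^{L}$, the arising commutators against $\langle v\rangle^\gamma\langle D_v\rangle^{2s}$ being reabsorbed via Lemma \ref{lemma2.2} and the interpolation Lemmas \ref{interpolation}--\ref{interpolation1} as above, while the commutator $[\langle D_v\rangle^{L},v\cdot\nabla_x]$ — one $D_v$-order lower but with an extra $\langle D_x\rangle$ — is controlled by the $x$-regularity already in hand; a combined Gronwall inequality then gives uniform bounds in $C([0,T];H^N(\mathbb R^3_x;H^N_N(\mathbb R^3_v)))$ for all $N$, hence in $C([0,T];H^\infty(\mathbb R^3_x;\mathcal S(\mathbb R^3_v)))$. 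Since these bounds are uniform, a standard regularization (e.g.\ adding $\varepsilon(1-\Delta_x)$ to make the equation parabolic, or a Galerkin truncation) produces approximate solutions obeying them, and a compactness/limit argument yields the solution, with $C^\infty$ regularity in $t$ obtained by differentiating the equation in $t$ and iterating. The main obstacle in all of this is the commutator term in the basic identity: because $\langle v\rangle^{\gamma/2}$ is an unbounded weight and $\langle D_v\rangle^{2s}$ has order $2s$, which can exceed $1$, this term cannot simply be dropped, and one must show via Lemma \ref{lemma2.2} and the interpolation inequalities that it is strictly subordinate to \emph{both} $\|\langle D_v\rangle^{s}(\langle v\rangle^{\gamma/2}u)\|_{L^2}^2$ and $\|\langle v\rangle^{\gamma/2+s}u\|_{L^2}^2$, so that a small multiple of each absorbs it; the regime $1/2<s<1$ together with the fact that $\gamma$ may be negative (only $\gamma+2s>0$ is assumed) is where this bookkeeping is tightest.
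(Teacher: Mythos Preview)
Your derivation of the quantitative estimate \eqref{3-1} is essentially the paper's argument: the same energy identity, the same splitting of $\big(\langle v\rangle^\gamma\langle D_v\rangle^{2s}u,u\big)$ into the dissipative norm plus the commutator $[\langle D_v\rangle^{2s},\langle v\rangle^{\gamma/2}]$, and the same absorption via Lemma~\ref{lemma2.2} and interpolation, followed by Gronwall. (Your treatment of the weight term for $\gamma\le0$ via $\langle v\rangle^{\gamma/2}\le1$ is a slight simplification; the paper invokes Lemma~\ref{interpolation1} there instead, but the outcome is the same.)

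Where you diverge is in the construction of the smooth solution. You propose to derive a priori $H^N_x(H^N_N)$ estimates by commuting the equation with $\langle D_x\rangle^N$, $\langle v\rangle^M$, $\langle D_v\rangle^L$ and then run an approximation (parabolic regularization or Galerkin) to produce the solution. The paper instead works on the \emph{adjoint} side: it defines $P=-\partial_t+\big(v\cdot\nabla_x+\langle v\rangle^\gamma\langle D_v\rangle^{2s}+\langle v\rangle^{\gamma+2s}\big)^*$, proves by induction on $n$ the backward estimate
\[
\|g\|_{L^\infty([0,T];H^\infty_x(H^n_r))}\lesssim\int_0^T\|Pg(\tau)\|_{H^\infty_x(H^n_r)}\,d\tau
\]
for test functions $g$ with $g(T)=0$, and then applies Hahn--Banach to the linear functional $Pg\mapsto (u_0,g(0))+\int_0^T(f,g)$ to obtain $u\in L^\infty([0,T];H^\infty_x(H^n_r))$ for every $n,r$; time smoothness follows by differentiating the equation. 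Your approach is constructive and requires setting up and verifying uniform bounds for an approximating problem; the paper's duality route avoids that machinery entirely and reads off existence from the a priori bound on the adjoint, at the cost of a slightly more delicate induction (the $[\partial_v^\beta,v\cdot\nabla_x]$ term couples $H^n_r$ to $H^{n-1}_r$ with one extra $x$-derivative, which is why the paper tracks all $m$ simultaneously and works in $H^\infty_x$). Both routes are standard and either is acceptable here.
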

\begin{proof}
   For any $r\ge0$ and $n, m\in\mathbb N$, let
   $$P=-\partial_{t}+\left(v\cdot\nabla_{x}+\langle v\rangle^{\gamma}\langle D_{v}\rangle^{2s}+\langle v\rangle^{\gamma+2s}\right)^{*},$$
 where the adjoint operator $(\cdot)^{*}$ is taken with respect to the scalar product in $H^{m}(\mathbb R^{3}_{x}; H^{n}_{r}(\mathbb R^{3}_{v}))$. We first consider the case of $n=0$, then for all $0\le t\le T$ and $g\in C^{\infty}([0, T]; \mathcal S(\mathbb R^{6}_{x, v}))$ with $\partial^{\alpha}_{x}\partial^{\beta}_{v}g(T)=0$, $\forall\alpha, \beta\in\mathbb N^{3}$
    \begin{equation*}
        \begin{split}
             &\left(g, Pg\right)_{H^{m}(\mathbb R^{3}_{x}; L^{2}_{r}(\mathbb R^{3}_{v}))}=-\frac{1}{2}\frac{d}{dt}\left\|g(t)\right\|^{2}_{H^{m}(\mathbb R^{3}_{x}; L^{2}_{r}(\mathbb R^{3}_{v}))}+\left(v\cdot\nabla_{x}g, g\right)_{H^{m}(\mathbb R^{3}_{x}; L^{2}_{r}(\mathbb R^{3}_{v}))}\\
             &\quad+\left(\langle v\rangle^{\gamma}\langle D_{v}\rangle^{2s}g, g\right)_{H^{m}(\mathbb R^{3}_{x}; L^{2}_{r}(\mathbb R^{3}_{v}))}+\left\|\langle v\rangle^{\frac\gamma2+s}g(t)\right\|^{2}_{H^{m}(\mathbb R^{3}_{x}; L^{2}_{r}(\mathbb R^{3}_{v}))}, \quad\forall m\in\mathbb N, r\ge0.
        \end{split}
    \end{equation*}
Noting that
$$\left(v\cdot\nabla_{x}g, g\right)_{H^{m}(\mathbb R^{3}_{x}; L^{2}_{r}(\mathbb R^{3}_{v}))}=\sum_{|\alpha|\le m}\left(v\cdot\nabla_{x}\langle v\rangle^{r}\partial^{\alpha}_{x}g, \langle v\rangle^{r}\partial^{\alpha}_{x}g\right)_{L^{2}(\mathbb R^{6}_{x, v})}=0,$$
and
     \begin{equation*}%\label{I-1}
        \begin{split}
             &\left(\langle v\rangle^{\gamma}\langle D_{v}\rangle^{2s}g, g\right)_{H^{m}(\mathbb R^{3}_{x}; L^{2}_{r}(\mathbb R^{3}_{v}))}%=\sum_{|\alpha|\le m}\left(\langle v\rangle^{\gamma+r}\langle D_{v}\rangle^{2s}\partial^{\alpha}_{x}g, \langle v\rangle^{r}\partial^{\alpha}_{x}g\right)_{L^{2}(\mathbb R^{6}_{x, v})}\\
             %&=\left(\langle D_{v}\rangle^{2s}\left(\langle v\rangle^{\frac\gamma2}g\right), \langle v\rangle^{\frac\gamma2}g\right)_{L^{2}(\mathbb R^6_{x, v})}+\left([\langle v\rangle^{\frac\gamma2}, \langle D_{v}\rangle^{2s}]g, \langle v\rangle^{\frac\gamma2}g\right)_{L^{2}(\mathbb R^6_{x, v})}\\
             =\sum_{|\alpha|\le m}\left\|\langle D_{v}\rangle^{s}\left(\langle v\rangle^{\frac\gamma2+r}\partial^{\alpha}_{x}g\right)\right\|^{2}_{L^{2}(\mathbb R^{6}_{x, v})}\\
             &\qquad\qquad\quad+\sum_{|\alpha|\le m}\left([\langle v\rangle^{\frac\gamma2+r}, \langle D_{v}\rangle^{2s}]\partial^{\alpha}_{x}g, \langle v\rangle^{\frac\gamma2+r}\partial^{\alpha}_{x}g\right)_{L^{2}(\mathbb R^{6}_{x, v})},
        \end{split}
    \end{equation*}
it remains to consider the last term on the right-hand side of the above equality. For $0<s\le1/2$, using Cauchy-Schwarz inequality and \eqref{0-s-1/2}, one has for all $|\alpha|\le m$
     \begin{equation*}
        \begin{split}
             &\left|\left([\langle v\rangle^{\frac\gamma2+r}, \langle D_{v}\rangle^{2s}]\partial^{\alpha}_{x}g,\langle v\rangle^{\frac\gamma2+r}\partial^{\alpha}_{x}g\right)_{L^{2}(\mathbb R^{6}_{x, v})}\right|\\
             &\le\left\|\left[\langle D_{v}\rangle^{2s}, \langle v\rangle^{\frac\gamma2+r}\right]\partial^{\alpha}_{x}g\right\|_{L^{2}(\mathbb R^{6}_{x, v})}\left\|\langle v\rangle^{\frac\gamma2+r}\partial^{\alpha}_{x}g\right\|_{L^{2}(\mathbb R^{6}_{x, v})}\le C_{1}\left\|\langle v\rangle^{\frac\gamma2+r}\partial^{\alpha}_{x}g\right\|^{2}_{L^{2}(\mathbb R^{6}_{x, v})}.
        \end{split}
    \end{equation*}
    For $1/2<s<1$, we have $0<2s-1<2s-s=s$, then using \eqref{1/2-s-1} and Cauchy-Schwarz inequality, one has for all $|\alpha|\le m$
     $$\left|\left([\langle v\rangle^{\frac\gamma2+r}, \langle D_{v}\rangle^{2s}]\partial^{\alpha}_{x}g,\langle v\rangle^{\frac\gamma2+r}\partial^{\alpha}_{x}g\right)_{L^{2}(\mathbb R^{6}_{x, v})}\right|\le C_{1}\left\|\langle D_{v}\rangle^{2s-1}\left(\langle v\rangle^{\frac\gamma2+r}\partial^{\alpha}_{x}g\right)\right\|^{2}_{L^{2}(\mathbb R^{6}_{x ,v})}.$$
    Since $g\in\mathcal S(\mathbb R^{6}_{x, v})$, it follows that
    $$\left\|\langle v\rangle^{\frac\gamma2+r}\partial^{\alpha}_{x}g\right\|^{2}_{L^{2}(\mathbb R^{6}_{x, v})}=\int_{\mathbb R^{3}_{x}}\left\|\langle \cdot\rangle^{\frac\gamma2+r}\partial^{\alpha}_{x}g(t, x, \cdot)\right\|^{2}_{L^{2}(\mathbb R^{3}_{v})}dx,$$
    and
    $$\left\|\langle D_{v}\rangle^{2s-1}\left(\langle v\rangle^{\frac\gamma2+r}\partial^{\alpha}_{x}g\right)\right\|^{2}_{L^{2}(\mathbb R^{6}_{x ,v})}=\int_{\mathbb R^{3}_{x}}\left\|\langle D_{v}\rangle^{2s-1}\left(\langle\cdot\rangle^{\frac\gamma2+r}\partial^{\alpha}_{x}g(t, x, \cdot)\right)\right\|^{2}_{L^{2}(\mathbb R^{3}_{v})}dx,$$
    for $\gamma>0$, applying Lemma \ref{interpolation} with $\epsilon=\frac{1}{2\sqrt{C_{1}}}$, it follows that
    \begin{equation*}
     \begin{split}
         &C_{1}\left\|\langle \cdot\rangle^{\frac\gamma2+r}\partial^{\alpha}_{x}g(t, x, \cdot)\right\|^{2}_{L^{2}(\mathbb R^{3}_{v})}\le C_{1}\left\|\langle D_{v}\rangle^{s/2}\left(\langle \cdot\rangle^{\frac\gamma2+r}\partial^{\alpha}_{x}g(t, x, \cdot)\right)\right\|^{2}_{L^{2}(\mathbb R^{3}_{v})}\\
         &\le C_{1}\left(\frac{1}{2\sqrt{C_{1}}}\left\|\langle \cdot\rangle^{\frac\gamma2+r}\partial^{\alpha}_{x}g(t, x, \cdot)\right\|_{H^{s}(\mathbb R^3_{v})}+\tilde C_{1}\left\|\langle \cdot\rangle^{r}\partial^{\alpha}_{x}g(t, x, \cdot)\right\|_{L^{2}(\mathbb R^3_{v})}\right)^{2}\\
         &\le\frac12\left\|\langle \cdot\rangle^{\frac\gamma2+r}\partial^{\alpha}_{x}g(t, x, \cdot)\right\|^{2}_{H^{s}(\mathbb R^3_{v})}+2C_{1}(\tilde C_{1})^{2}\left\|\langle \cdot\rangle^{r}\partial^{\alpha}_{x}g(t, x, \cdot)\right\|^{2}_{L^{2}(\mathbb R^3_{v})},
      \end{split}
     \end{equation*}
     and
     \begin{equation*}
     \begin{split}
        &C_{1}\left\|\langle D_{v}\rangle^{2s-1}\left(\langle\cdot\rangle^{\frac\gamma2+r}\partial^{\alpha}_{x}g(t, x, \cdot)\right)\right\|^{2}_{L^{2}(\mathbb R^{3}_{v})}\\
        &\le\frac12\left\|\langle \cdot\rangle^{\frac\gamma2+r}\partial^{\alpha}_{x}g(t, x, \cdot)\right\|^{2}_{H^{s}(\mathbb R^3_{v})}+2C_{1}(\tilde C_{1})^{2}\left\|\langle \cdot\rangle^{r}\partial^{\alpha}_{x}g(t, x, \cdot)\right\|^{2}_{L^{2}(\mathbb R^3_{v})},
     \end{split}
     \end{equation*}
     here the constant $\tilde C_{1}$ depends on $C_{1}$. For the case of $\gamma\le0$, by using Lemma \ref{interpolation1} and Cauchy-Schwarz inequality, it follows that $C_{1}\left\|\langle \cdot\rangle^{\frac\gamma2+r}\partial^{\alpha}_{x}g(t, x, \cdot)\right\|^{2}_{L^{2}(\mathbb R^{3}_{v})}$ and $C_{1}\left\|\langle D_{v}\rangle^{2s-1}\left(\langle\cdot\rangle^{\frac\gamma2+r}\partial^{\alpha}_{x}g(t, x, \cdot)\right)\right\|^{2}_{L^{2}(\mathbb R^{3}_{v})}$ can be bounded by
      \begin{equation*}
         \frac12\left\|\langle \cdot\rangle^{\frac\gamma2+r}\partial^{\alpha}_{x}g(t, x, \cdot)\right\|^{2}_{H^{s}(\mathbb R^3_{v})}+C_{6}\left\|\langle \cdot\rangle^{r}\partial^{\alpha}_{x}g(t, x, \cdot)\right\|^{2}_{L^{2}(\mathbb R^3_{v})},
     \end{equation*}
     with $C_{6}$ depends on $C_{1}$ and $\gamma, s, r$. Thus for all $\gamma>-2s$, $0<s<1$
     \begin{equation}\label{I-2}
     \begin{split}
         &\left|\left([\langle v\rangle^{\frac\gamma2+r}, \langle D_{v}\rangle^{2s}]\partial^{\alpha}_{x}g,\langle v\rangle^{\frac\gamma2+r}\partial^{\alpha}_{x}g\right)_{L^{2}(\mathbb R^{6}_{x, v})}\right|\\
         &\le\frac12\left\|\langle D_{v}\rangle^{s}\left(\langle v\rangle^{\frac\gamma2+r}\partial^{\alpha}_{x}g\right)\right\|^{2}_{L^{2}(\mathbb R^6_{x, v})}+\tilde C_{6}\left\|\langle v\rangle^{r}\partial^{\alpha}_{x}g\right\|^{2}_{L^{2}(\mathbb R^6_{x, v})}, \ \forall \alpha, r,
     \end{split}
     \end{equation}
     here $\tilde C_{6}=\max\{C_{6}, 2C_{1}(\tilde C_{1})^{2}\}$. Combining these results and using Cauchy-Schwarz inequality, one has for all $m\in\mathbb N$
    \begin{equation*}
        \begin{split}
             &-\frac{1}{2}\frac{d}{dt}\left\|g(t)\right\|^{2}_{H^{m}(\mathbb R^{3}_{x}; L^{2}_{r}(\mathbb R^{3}_{v}))}+\frac12\sum_{|\alpha|\le m}\left\|\langle D_{v}\rangle^{s}\left(\langle v\rangle^{\frac\gamma2+r}\partial^{\alpha}_{x}g(t)\right)\right\|^{2}_{L^{2}(\mathbb R^6_{x, v})}\\
             &\quad+\left\|\langle v\rangle^{\frac\gamma2+s}g(t)\right\|^{2}_{H^{m}(\mathbb R^{3}_{x}; L^{2}_{r}(\mathbb R^{3}_{v}))}-\tilde C_{6}\left\|g(t)\right\|^{2}_{H^{m}(\mathbb R^{3}_{x}; L^{2}_{r}(\mathbb R^{3}_{v}))}\\
             &\le\left\|g(t)\right\|_{H^{m}(\mathbb R^{3}_{x}; L^{2}_{r}(\mathbb R^{3}_{v}))}\left\|Pg(t)\right\|_{H^{m}(\mathbb R^{3}_{x}; L^{2}_{r}(\mathbb R^{3}_{v}))},
        \end{split}
    \end{equation*}
    which implies that for all $m\in\mathbb N$ and $r\ge0$
    \begin{equation*}
        \begin{split}
             &-\frac{d}{dt}\left(e^{2\tilde C_{6}t}\left\|g(t)\right\|^{2}_{H^{m}(\mathbb R^{3}_{x}; L^{2}_{r}(\mathbb R^{3}_{v}))}\right)+2e^{2\tilde C_{6}t}\left\|\langle v\rangle^{\frac\gamma2+s}g(t)\right\|^{2}_{H^{m}(\mathbb R^{3}_{x}; L^{2}_{r}(\mathbb R^{3}_{v}))}\\
             &\quad+e^{2\tilde C_{6}t}\sum_{|\alpha|\le m}\left\|\langle D_{v}\rangle^{s}\left(\langle v\rangle^{\frac\gamma2+r}\partial^{\alpha}_{x}g(t)\right)\right\|^{2}_{L^{2}(\mathbb R^6_{x, v})}\\
             %&\quad+2e^{2\tilde C_{6}t}\left\|\langle v\rangle^{\frac\gamma2+s}g(t)\right\|^{2}_{H^{m}(\mathbb R^{3}_{x}; L^{2}_{r}(\mathbb R^{3}_{v}))}\\
             &\le2e^{2\tilde C_{6}t}\left\|g(t)\right\|_{H^{m}(\mathbb R^{3}_{x}; L^{2}_{r}(\mathbb R^{3}_{v}))}\left\|Pg(t)\right\|_{H^{m}(\mathbb R^{3}_{x}; L^{2}_{r}(\mathbb R^{3}_{v}))}.
        \end{split}
    \end{equation*}
    Since $\partial^{\alpha}_{x}g(T)=0$, it follows that for all $t\in[0, T]$,  $m\in\mathbb N$ and $r\ge0$
    \begin{equation}\label{n-0}
        \begin{split}
             &\left\|g(t)\right\|^{2}_{H^{m}(\mathbb R^{3}_{x}; L^{2}_{r}(\mathbb R^{3}_{v}))}+2\int_{t}^{T}e^{2\tilde C_{6}(\tau-t)}\left\|\langle v\rangle^{\frac\gamma2+s}g(\tau)\right\|^{2}_{H^{m}(\mathbb R^{3}_{x}; L^{2}_{r}(\mathbb R^{3}_{v}))}d\tau\\
             &\quad+\sum_{|\alpha|\le m}\int_{t}^{T}e^{2\tilde C_{6}(\tau-t)}\left\|\langle D_{v}\rangle^{s}\left(\langle v\rangle^{\frac\gamma2+r}\partial^{\alpha}_{x}g(\tau)\right)\right\|^{2}_{L^{2}(\mathbb R^6_{x, v})}d\tau\\
             %&\quad+2\int_{t}^{T}e^{2\tilde C_{6}(\tau-t)}\left\|\langle v\rangle^{\frac\gamma2+s}g(\tau)\right\|^{2}_{H^{m}(\mathbb R^{3}_{x}; L^{2}_{r}(\mathbb R^{3}_{v}))}d\tau\\
            &\le2e^{2\tilde C_{6}T}\int_{t}^{T}\left\|g(\tau)\right\|_{H^{m}(\mathbb R^{3}_{x}; L^{2}_{r}(\mathbb R^{3}_{v}))}\left\|Pg(\tau)\right\|_{H^{m}(\mathbb R^{3}_{x}; L^{2}_{r}(\mathbb R^{3}_{v}))}d\tau\\
            &\le2e^{2\tilde C_{6}T}\int_{t}^{T}\left\|g(\tau)\right\|_{H^{\infty}(\mathbb R^{3}_{x}; L^{2}_{r}(\mathbb R^{3}_{v}))}\left\|Pg(\tau)\right\|_{H^{\infty}(\mathbb R^{3}_{x}; L^{2}_{r}(\mathbb R^{3}_{v}))}d\tau.
        \end{split}
    \end{equation}

    Next, for all $n\in\mathbb N$, we would show by induction that there exists $C_{7}>0$ such that for all $t\in[0, T]$
    \begin{equation*}
        \begin{split}
             &\left\|g(t)\right\|^{2}_{H^{m}(\mathbb R^{3}_{x}; H^{n}_{r}(\mathbb R^{3}_{v}))}+2\int_{t}^{T}e^{2C_{7}(\tau-t)}\left\|\langle v\rangle^{\frac\gamma2+s}g(\tau)\right\|^{2}_{H^{m}(\mathbb R^{3}_{x}; H^{n}_{r}(\mathbb R^{3}_{v}))}d\tau\\
             &\quad+\sum_{|\alpha|\le m}\sum_{|\beta|\le n}\int_{t}^{T}e^{2C_{7}(\tau-t)}\left\|\langle D_{v}\rangle^{s}\left(\langle v\rangle^{\frac\gamma2+r}\partial^{\alpha}_{x}\partial^{\beta}_{v}g(\tau)\right)\right\|^{2}_{L^{2}(\mathbb R^6_{x, v})}d\tau\\
             %&\quad+2\int_{t}^{T}e^{2C_{7}(\tau-t)}\left\|\langle v\rangle^{\frac\gamma2+s}g(\tau)\right\|^{2}_{H^{m}(\mathbb R^{3}_{x}; L^{2}_{r}(\mathbb R^{3}_{v}))}d\tau\\
            &\le2e^{2C_{7}T}\int_{t}^{T}\left\|g(\tau)\right\|_{H^{\infty}(\mathbb R^{3}_{x}; H^{n}_{r}(\mathbb R^{3}_{v}))}\left\|Pg(\tau)\right\|_{H^{\infty}(\mathbb R^{3}_{x}; H^{n}_{r}(\mathbb R^{3}_{v}))}d\tau, \ \forall m\in\mathbb N, r\ge0.
        \end{split}
    \end{equation*}
    For $n=0$, it is true from \eqref{n-0}. Assume $n\ge1$, for all $t\in[0, T]$ and $0\le j\le n-1$
    \begin{equation*}
        \begin{split}
             &\left\|g(t)\right\|^{2}_{H^{m}(\mathbb R^{3}_{x}; H^{j}_{r}(\mathbb R^{3}_{v}))}+2\int_{t}^{T}e^{2C_{7}(\tau-t)}\left\|\langle v\rangle^{\frac\gamma2+s}g(\tau)\right\|^{2}_{H^{m}(\mathbb R^{3}_{x}; H^{j}_{r}(\mathbb R^{3}_{v}))}d\tau\\
             &\quad+\sum_{|\alpha|\le m}\sum_{|\beta|\le j}\int_{t}^{T}e^{2C_{7}(\tau-t)}\left\|\langle D_{v}\rangle^{s}\left(\langle v\rangle^{\frac\gamma2+r}\partial^{\alpha}_{x}\partial^{\beta}_{v}g(\tau)\right)\right\|^{2}_{L^{2}(\mathbb R^6_{x, v})}d\tau\\
             %&\quad+2\int_{t}^{T}e^{2C_{7}(\tau-t)}\left\|\langle v\rangle^{\frac\gamma2+s}g(\tau)\right\|^{2}_{H^{m}(\mathbb R^{3}_{x}; L^{2}_{r}(\mathbb R^{3}_{v}))}d\tau\\
            &\le2e^{2C_{7}T}\int_{t}^{T}\left\|g(\tau)\right\|_{H^{\infty}(\mathbb R^{3}_{x}; H^{j}_{r}(\mathbb R^{3}_{v}))}\left\|Pg(\tau)\right\|_{H^{\infty}(\mathbb R^{3}_{x}; H^{j}_{r}(\mathbb R^{3}_{v}))}d\tau, \ \forall m\in\mathbb N, r\ge0.
        \end{split}
    \end{equation*}
    Then, for $n$, we have
    \begin{equation*}
        \begin{split}
             &\left(g, Pg\right)_{H^{m}(\mathbb R^{3}_{x}; H^{n}_{r}(\mathbb R^{3}_{v}))}=-\frac{1}{2}\frac{d}{dt}\left\|g(t)\right\|^{2}_{H^{m}(\mathbb R^{3}_{x}; H^{n}_{r}(\mathbb R^{3}_{v}))}+\left(v\cdot\nabla_{x}g, g\right)_{H^{m}(\mathbb R^{3}_{x}; H^{n}_{r}(\mathbb R^{3}_{v}))}\\
             &\quad+\left(\langle v\rangle^{\gamma}\langle D_{v}\rangle^{2s}g, g\right)_{H^{m}(\mathbb R^{3}_{x}; H^{n}_{r}(\mathbb R^{3}_{v}))}+\left\|\langle v\rangle^{\frac\gamma2+s}g(t)\right\|^{2}_{H^{m}(\mathbb R^{3}_{x}; H^{n}_{r}(\mathbb R^{3}_{v}))}, \  \forall m\in\mathbb N, r\ge0.
        \end{split}
    \end{equation*}
    From the Leibniz formula, it follows that
    \begin{equation*}
        \begin{split}
             &\left(v\cdot\nabla_{x}g, g\right)_{H^{m}(\mathbb R^{3}_{x}; H^{n}_{r}(\mathbb R^{3}_{v}))}=\sum_{|\alpha|\le m}\sum_{|\beta|\le n}\left(\langle v\rangle^{r}\partial^{\beta}_{v}\left(v\cdot\nabla_{x}\partial^{\alpha}_{x}g\right), \langle v\rangle^{r}\partial^{\alpha}_{x}\partial^{\beta}_{v}g\right)_{L^{2}}\\
             &=\left(v\cdot\nabla_{x}g, g\right)_{H^{m}(\mathbb R^{3}_{x}; L^{2}_{r}(\mathbb R^{3}_{v}))}+\sum_{|\alpha|\le m}\sum_{1\le|\beta|\le n}\sum_{j=1}^{3}\left(\langle v\rangle^{r}\partial^{\beta-e_{j}}_{v}\partial^{\alpha+e_{j}}_{x}g, \langle v\rangle^{r}\partial^{\alpha}_{x}\partial^{\beta}_{v}g\right)_{L^{2}}\\
             &=\sum_{|\alpha|\le m}\sum_{1\le|\beta|\le n}\sum_{j=1}^{3}\left(\langle v\rangle^{r}\partial^{\beta-e_{j}}_{v}\partial^{\alpha+e_{j}}_{x}g, \langle v\rangle^{r}\partial^{\alpha}_{x}\partial^{\beta}_{v}g\right)_{L^{2}},
        \end{split}
    \end{equation*}
    using Cauchy-Schwarz inequality, we have
    \begin{equation*}
        \begin{split}
             &\left|\left(v\cdot\nabla_{x}g, g\right)_{H^{m}(\mathbb R^{3}_{x}; H^{n}_{r}(\mathbb R^{3}_{v}))}\right|\le\sum_{|\alpha|\le m}\sum_{1\le|\beta|\le n}\left\|\langle v\rangle^{r}\partial^{\beta-1}_{v}\partial^{\alpha+1}_{x}g\right\|_{L^{2}}\left\|\langle v\rangle^{r}\partial^{\alpha}_{x}\partial^{\beta}_{v}g\right\|_{L^{2}}\\
             &\le\bigg(\sum_{|\alpha|\le m}\sum_{1\le|\beta|\le n}\left\|\langle v\rangle^{r}\partial^{\beta-1}_{v}\partial^{\alpha+1}_{x}g\right\|_{L^{2}}\bigg)^{\frac12}\bigg(\sum_{|\alpha|\le m}\sum_{1\le|\beta|\le n}\left\|\langle v\rangle^{r}\partial^{\alpha}_{x}\partial^{\beta}_{v}g\right\|^{2}_{L^{2}}\bigg)^{\frac12}\\
             &\le\left\|g\right\|_{H^{m+1}(\mathbb R^{3}_{x}; H^{n-1}_{r}(\mathbb R^{3}_{v}))}\left\|g\right\|_{H^{m}(\mathbb R^{3}_{x}; H^{n}_{r}(\mathbb R^{3}_{v}))}\\
             &\le\frac14\left\|g\right\|^{2}_{H^{m+1}(\mathbb R^{3}_{x}; H^{n-1}_{r}(\mathbb R^{3}_{v}))}+\left\|g\right\|^{2}_{H^{m}(\mathbb R^{3}_{x}; H^{n}_{r}(\mathbb R^{3}_{v}))}.
        \end{split}
    \end{equation*}
   By using the Leibniz formula, it follows that
    \begin{equation*}
        \begin{split}
             &\left(\langle v\rangle^{\gamma}\langle D_{v}\rangle^{2s}g, g\right)_{H^{m}(\mathbb R^{3}_{x}; H^{n}_{r}(\mathbb R^{3}_{v}))}=\sum_{|\alpha|\le m}\sum_{|\beta|\le n}\left(\langle v\rangle^{r}\partial^{\beta}_{v}\left(\langle v\rangle^{\gamma}\langle D_{v}\rangle^{2s}\partial^{\alpha}_{x}g\right), \langle v\rangle^{r}\partial^{\alpha}_{x}\partial^{\beta}_{v}g\right)_{L^{2}}\\
             %&=\left(\langle D_{v}\rangle^{2s}\left(\langle v\rangle^{\frac\gamma2}g\right), \langle v\rangle^{\frac\gamma2}g\right)_{L^{2}(\mathbb R^6_{x, v})}+\left([\langle v\rangle^{\frac\gamma2}, \langle D_{v}\rangle^{2s}]g, \langle v\rangle^{\frac\gamma2}g\right)_{L^{2}(\mathbb R^6_{x, v})}\\
             %&=\sum_{|\alpha|\le m}\sum_{|\beta|\le n}\sum_{\beta_{1}+\beta_{2}=\beta}C_{\beta}^{\beta_{1}}\left(\langle v\rangle^{r}\partial^{\beta_{1}}_{v}\langle v\rangle^{\gamma}\langle D_{v}\rangle^{2s}\partial^{\alpha}_{x}\partial^{\beta_{2}}_{v}g, \langle v\rangle^{r}\partial^{\alpha}_{x}\partial^{\beta}_{v}g\right)_{L^{2}}\\
             &=\sum_{|\alpha|\le m}\sum_{|\beta|\le n}\sum_{0<\beta_{1}\le\beta}C_{\beta}^{\beta_{1}}\left(\langle D_{v}\rangle^{2s}\left(\langle v\rangle^{r-\frac\gamma2}\partial^{\beta_{1}}_{v}\langle v\rangle^{\gamma}\partial^{\alpha}_{x}\partial^{\beta-\beta_{1}}_{v}g\right), \langle v\rangle^{\frac\gamma2+r}\partial^{\alpha}_{x}\partial^{\beta}_{v}g\right)_{L^{2}}\\
             &\quad+\sum_{|\alpha|\le m}\sum_{|\beta|\le n}\sum_{0<\beta_{1}\le\beta}C_{\beta}^{\beta_{1}}\left(\left[\langle v\rangle^{r-\frac\gamma2}\partial^{\beta_{1}}_{v}\langle v\rangle^{\gamma}, \langle D_{v}\rangle^{2s}\right]\partial^{\alpha}_{x}\partial^{\beta-\beta_{1}}_{v}g, \langle v\rangle^{\frac\gamma2+r}\partial^{\alpha}_{x}\partial^{\beta}_{v}g\right)_{L^{2}}\\
             &\quad+\sum_{|\alpha|\le m}\sum_{|\beta|\le n}\left\|\langle D_{v}\rangle^{s}\left(\langle v\rangle^{\frac\gamma2+r}\partial^{\alpha}_{x}\partial^{\beta}_{v}g\right)\right\|^{2}_{L^{2}}\\
             &\quad+\sum_{|\alpha|\le m}\sum_{|\beta|\le n}\left(\left[\langle v\rangle^{\frac\gamma2+r}, \langle D_{v}\rangle^{2s}\right]\partial^{\alpha}_{x}\partial^{\beta}_{v}g, \langle v\rangle^{\frac\gamma2+r}\partial^{\alpha}_{x}\partial^{\beta}_{v}g\right)_{L^{2}}.
        \end{split}
    \end{equation*}
    For $0<s\le1/2$, by using Cauchy-Schwarz inequality and \eqref{0-s-1/2}, one has for all $\alpha, \beta$
    $$\left|\left(\left[\langle v\rangle^{\frac\gamma2+r}, \langle D_{v}\rangle^{2s}\right]\partial^{\alpha}_{x}\partial^{\beta}_{v}g, \langle v\rangle^{\frac\gamma2+r}\partial^{\alpha}_{x}\partial^{\beta}_{v}g\right)_{L^{2}}\right|\le C_{1}\left\|\langle v\rangle^{\frac\gamma2+r}\partial^{\alpha}_{x}\partial^{\beta}_{v}g\right\|^{2}_{L^{2}},$$
    and for $1/2<s<1$, we have $0<2s-1<s$, then using \eqref{1/2-s-1} and Cauchy-Schwarz inequality, one has for all $\alpha, \beta$
    $$\left|\left(\left[\langle v\rangle^{\frac\gamma2+r}, \langle D_{v}\rangle^{2s}\right]\partial^{\alpha}_{x}\partial^{\beta}_{v}g, \langle v\rangle^{\frac\gamma2+r}\partial^{\alpha}_{x}\partial^{\beta}_{v}g\right)_{L^{2}}\right|\le C_{1}\left\|\langle D_{v}\rangle^{2s-1}\left(\langle v\rangle^{\frac\gamma2+r}\partial^{\alpha}_{x}\partial^{\beta}_{v}g\right)\right\|^{2}_{L^{2}}.$$
    Then for $\gamma>0$, applying Lemma \ref{interpolation} with $\epsilon=\frac{1}{2\sqrt{2C_{1}}}$, for $\gamma+2r\le0$, using Lemma \ref{interpolation1} and Cauchy-Schwarz inequality, it follows that for all $|\alpha|\le m, |\beta|\le n, r\ge0$
    $$ C_{1}\left\|\langle \cdot\rangle^{\frac\gamma2+r}\partial^{\alpha}_{x}\partial^{\beta}_{v}g\right\|^{2}_{L^{2}(\mathbb R^{3}_{v})}\le\frac14\left\|\langle \cdot\rangle^{\frac\gamma2+r}\partial^{\alpha}_{x}\partial^{\beta}_{v}g\right\|^{2}_{H^{s}(\mathbb R^3_{v})}+C_{8}\left\|\partial^{\alpha}_{x}\partial^{\beta}_{v}g\right\|^{2}_{L^{2}_{r}(\mathbb R^3_{v})},$$
    and
    $$C_{1}\left\|\langle D_{v}\rangle^{2s-1}\left(\langle \cdot\rangle^{\frac\gamma2+r}\partial^{\alpha}_{x}\partial^{\beta}_{v}g\right)\right\|^{2}_{L^{2}(\mathbb R^{3}_{v})}\le\frac14\left\|\langle \cdot\rangle^{\frac\gamma2+r}\partial^{\alpha}_{x}\partial^{\beta}_{v}g\right\|^{2}_{H^{s}(\mathbb R^3_{v})}+C_{8}\left\|\partial^{\alpha}_{x}\partial^{\beta}_{v}g\right\|^{2}_{L^{2}_{r}(\mathbb R^3_{v})},$$
    so that for all $0<s<1$, $\gamma>-2s$ and $|\alpha|\le m, |\beta|\le n, r\ge0$
     \begin{equation*}
     \begin{split}
         &\left|\left([\langle v\rangle^{\frac\gamma2+r}, \langle D_{v}\rangle^{2s}]\partial^{\alpha}_{x}\partial^{\beta}_{v}g,\langle v\rangle^{\frac\gamma2+r}\partial^{\alpha}_{x}\partial^{\beta}_{v}g\right)_{L^{2}(\mathbb R^{6}_{x, v})}\right|\\
         &\le\frac14\left\|\langle D_{v}\rangle^{s}\left(\langle v\rangle^{\frac\gamma2+r}\partial^{\alpha}_{x}\partial^{\beta}_{v}g\right)\right\|^{2}_{L^{2}(\mathbb R^6_{x, v})}+C_{8}\left\|\langle v\rangle^{r}\partial^{\alpha}_{x}\partial^{\beta}_{v}g\right\|^{2}_{L^{2}(\mathbb R^6_{x, v})}.
     \end{split}
     \end{equation*}
    From Cauchy-Schwarz inequality, it follows that
    \begin{equation*}
    \begin{split}
         &\left|\left(\langle D_{v}\rangle^{2s}\left(\langle v\rangle^{r-\frac\gamma2}\partial^{\beta_{1}}_{v}\langle v\rangle^{\gamma}\partial^{\alpha}_{x}\partial^{\beta-\beta_{1}}_{v}g\right), \langle v\rangle^{\frac\gamma2+r}\partial^{\alpha}_{x}\partial^{\beta}_{v}g\right)_{L^{2}}\right|\\
         %&=\left|\left(\langle D_{v}\rangle^{s}\left(\langle v\rangle^{r-\frac\gamma2}\partial^{\beta_{1}}_{v}\langle v\rangle^{\gamma}\partial^{\alpha}_{x}\partial^{\beta_{2}}_{v}g\right), \langle D_{v}\rangle^{s}\left(\langle v\rangle^{\frac\gamma2+r}\partial^{\alpha}_{x}\partial^{\beta}_{v}g\right)\right)_{L^{2}}\right|\\
         &\le\left\|\langle D_{v}\rangle^{s}\left(\langle v\rangle^{r-\frac\gamma2}\partial^{\beta_{1}}_{v}\langle v\rangle^{\gamma}\partial^{\alpha}_{x}\partial^{\beta-\beta_{1}}_{v}g\right)\right\|_{L^{2}(\mathbb R^6_{x, v})}\left\|\langle D_{v}\rangle^{s}\left(\langle v\rangle^{\frac\gamma2+r}\partial^{\alpha}_{x}\partial^{\beta}_{v}g\right)\right\|_{L^{2}(\mathbb R^6_{x, v})}.
         %&\le C_{\gamma, s, r, \beta}\bigg(\sum_{0<\beta_{1}\le\beta}\left\|\langle D_{v}\rangle^{s}\left(\langle v\rangle^{\frac\gamma2+r}\partial^{\alpha}_{x}\partial^{\beta-\beta_{1}}_{v}g\right)\right\|_{L^{2}}\bigg)^{2}+\frac18\left\|\langle D_{v}\rangle^{s}\left(\langle v\rangle^{\frac\gamma2+r}\partial^{\alpha}_{x}\partial^{\beta}_{v}g\right)\right\|^{2}_{L^{2}}.
    \end{split}
    \end{equation*}
    Since $0<s<1$ and $\left|\langle v\rangle^{r-\frac\gamma2}\partial^{\beta_{1}}_{v}\langle v\rangle^{\gamma}\right|\le C(\gamma, \beta_{1})\langle v\rangle^{\frac\gamma2+r}$, we have
    \begin{equation*}
    \begin{split}
         &\left\|\langle D_{v}\rangle^{s}\left(\langle v\rangle^{r-\frac\gamma2}\partial^{\beta_{1}}_{v}\langle v\rangle^{\gamma}\partial^{\alpha}_{x}\partial^{\beta-\beta_{1}}_{v}g\right)\right\|_{L^{2}(\mathbb R^6_{x, v})}\\
         &\le\sum_{|\sigma|\le1}\left\|\partial^{\sigma}_{v}\left(\langle v\rangle^{r-\frac\gamma2}\partial^{\beta_{1}}_{v}\langle v\rangle^{\gamma}\partial^{\alpha}_{x}\partial^{\beta-\beta_{1}}_{v}g\right)\right\|_{L^{2}(\mathbb R^6_{x, v})}\\
         &\le C(\gamma, r, \beta_{1})\bigg(\left\|\langle v\rangle^{\frac\gamma2+r}\partial^{\alpha}_{x}\partial^{\beta-\beta_{1}}_{v}g\right\|_{L^{2}(\mathbb R^6_{x, v})}+\sum_{j=1}^{3}\left\|\langle v\rangle^{\frac\gamma2+r}\partial^{\alpha}_{x}\partial^{\beta-\beta_{1}+e_{j}}_{v}g\right\|_{L^{2}(\mathbb R^6_{x, v})}\bigg),
    \end{split}
    \end{equation*}
    then from Cauchy-Schwarz inequality
    \begin{equation*}
    \begin{split}
         &\left|\sum_{|\alpha|\le m}\sum_{|\beta|\le n}\sum_{0<\beta_{1}\le\beta}C_{\beta}^{\beta_{1}}\left(\langle D_{v}\rangle^{2s}\left(\langle v\rangle^{r-\frac\gamma2}\partial^{\beta_{1}}_{v}\langle v\rangle^{\gamma}\partial^{\alpha}_{x}\partial^{\beta-\beta_{1}}_{v}g\right), \langle v\rangle^{\frac\gamma2+r}\partial^{\alpha}_{x}\partial^{\beta}_{v}g\right)_{L^{2}}\right|\\
         %&=\left|\left(\langle D_{v}\rangle^{s}\left(\langle v\rangle^{r-\frac\gamma2}\partial^{\beta_{1}}_{v}\langle v\rangle^{\gamma}\partial^{\alpha}_{x}\partial^{\beta_{2}}_{v}g\right), \langle D_{v}\rangle^{s}\left(\langle v\rangle^{\frac\gamma2+r}\partial^{\alpha}_{x}\partial^{\beta}_{v}g\right)\right)_{L^{2}}\right|\\
         %&\le\sum_{0<\beta_{1}\le\beta}C_{\beta}^{\beta_{1}}\left\|\langle D_{v}\rangle^{s}\left(\langle v\rangle^{r-\frac\gamma2}\partial^{\beta_{1}}_{v}\langle v\rangle^{\gamma}\partial^{\alpha}_{x}\partial^{\beta-\beta_{1}}_{v}g\right)\right\|_{L^{2}}\left\|\langle D_{v}\rangle^{s}\left(\langle v\rangle^{\frac\gamma2+r}\partial^{\alpha}_{x}\partial^{\beta}_{v}g\right)\right\|_{L^{2}}\\
         &\le C_{9}\sum_{|\alpha|\le m}\sum_{|\beta|\le n}\left\|\langle v\rangle^{\frac\gamma2+r}\partial^{\alpha}_{x}\partial^{\beta}_{v}g\right\|_{L^{2}}^{2}+\frac{1}{16}\sum_{|\alpha|\le m}\sum_{|\beta|\le n}\left\|\langle D_{v}\rangle^{s}\left(\langle v\rangle^{\frac\gamma2+r}\partial^{\alpha}_{x}\partial^{\beta}_{v}g\right)\right\|^{2}_{L^{2}}.
    \end{split}
    \end{equation*}
    For the first term on the right-hand side of the above inequality, if $\gamma>0$, applying Lemma \ref{interpolation} with $\epsilon=\frac{1}{4\sqrt{2C_{9}}}$, it follows that
    \begin{equation*}
    \begin{split}
         &C_{9}\left\|\langle v\rangle^{\frac\gamma2+r}\partial^{\alpha}_{x}\partial^{\beta}_{v}g\right\|_{L^{2}(\mathbb R^{6}_{x, v})}^{2}=C_{9}\int_{\mathbb R^{3}_{x}}\left\|\langle \cdot\rangle^{\frac\gamma2+r}\partial^{\alpha}_{x}\partial^{\beta}_{v}g(t, x, \cdot)\right\|^{2}_{L^{2}(\mathbb R^{3}_{v})}dx\\
         &\le C_{9}\int_{\mathbb R^{3}_{x}}\left\|\langle D_{v}\rangle^{s/2}\left(\langle \cdot\rangle^{\frac\gamma2+r}\partial^{\alpha}_{x}\partial^{\beta}_{v}g(t, x, \cdot)\right)\right\|^{2}_{L^{2}(\mathbb R^{3}_{v})}dx\\
         &\le C_{9}\int_{\mathbb R^{3}_{x}}\left(\frac{1}{4\sqrt{2C_{9}}}\left\|\langle \cdot\rangle^{\frac\gamma2+r}\partial^{\alpha}_{x}\partial^{\beta}_{v}g(t, x, \cdot)\right\|_{H^{s}(\mathbb R^3_{v})}+\tilde C_{9}\left\|\partial^{\alpha}_{x}\partial^{\beta}_{v}g(t, x, \cdot)\right\|_{L^{2}_{r}(\mathbb R^3_{v})}\right)^{2}dx\\
         &\le\frac{1}{16}\left\|\langle D_{v}\rangle^{s}\left(\langle v\rangle^{\frac\gamma2+r}\partial^{\alpha}_{x}\partial^{\beta}_{v}g\right)\right\|^{2}_{L^{2}(\mathbb R^6_{x, v})}+2C_{9}(\tilde C_{9})^{2}\left\|\langle v\rangle^{r}\partial^{\alpha}_{x}\partial^{\beta}_{v}g\right\|^{2}_{L^{2}(\mathbb R^6_{x, v})},
    \end{split}
    \end{equation*}
    if $\gamma\le0$, by using Lemma \ref{interpolation1} and Cauchy-Schwarz inequality, it follows that
    \begin{equation*}
    \begin{split}
         &C_{9}\left\|\langle v\rangle^{\frac\gamma2+r}\partial^{\alpha}_{x}\partial^{\beta}_{v}g\right\|_{L^{2}(\mathbb R^6_{x, v})}^{2}\\
         &\le\frac{1}{16}\left\|\langle D_{v}\rangle^{s}\left(\langle v\rangle^{\frac\gamma2+r}\partial^{\alpha}_{x}\partial^{\beta}_{v}g\right)\right\|^{2}_{L^{2}(\mathbb R^6_{x, v})}+C_{10}\left\|\langle v\rangle^{r}\partial^{\alpha}_{x}\partial^{\beta}_{v}g\right\|^{2}_{L^{2}(\mathbb R^6_{x, v})}.
    \end{split}
    \end{equation*}
    Hence, for all $0<s<1$ and $\gamma>-2s$, we have
    \begin{equation*}
    \begin{split}
         &\left|\sum_{|\alpha|\le m}\sum_{|\beta|\le n}\sum_{0<\beta_{1}\le\beta}C_{\beta}^{\beta_{1}}\left(\langle D_{v}\rangle^{2s}\left(\langle v\rangle^{r-\frac\gamma2}\partial^{\beta_{1}}_{v}\langle v\rangle^{\gamma}\partial^{\alpha}_{x}\partial^{\beta-\beta_{1}}_{v}g\right), \langle v\rangle^{\frac\gamma2+r}\partial^{\alpha}_{x}\partial^{\beta}_{v}g\right)_{L^{2}(\mathbb R^6_{x, v})}\right|\\
         %&\le C_{9}\sum_{|\alpha|\le m}\sum_{|\beta|\le n}\left\|\langle v\rangle^{\frac\gamma2+r}\partial^{\alpha}_{x}\partial^{\beta}_{v}g\right\|_{L^{2}}^{2}+\frac{1}{16}\sum_{|\alpha|\le m}\sum_{|\beta|\le n}\left\|\langle D_{v}\rangle^{s}\left(\langle v\rangle^{\frac\gamma2+r}\partial^{\alpha}_{x}\partial^{\beta}_{v}g\right)\right\|^{2}_{L^{2}}\\
         &\le\tilde C_{10}\left\|g(t)\right\|^{2}_{H^{m}(\mathbb R^{3}_{x}; H^{n}_{r}(\mathbb R^{3}_{x}))}+\frac{1}{8}\sum_{|\alpha|\le m}\sum_{|\beta|\le n}\left\|\langle D_{v}\rangle^{s}\left(\langle v\rangle^{\frac\gamma2+r}\partial^{\alpha}_{x}\partial^{\beta}_{v}g\right)\right\|^{2}_{L^{2}(\mathbb R^6_{x, v})},
    \end{split}
    \end{equation*}
    here $\tilde C_{10}=\max\{2C_{9}(\tilde C_{9})^{2}, C_{10}\}$. As the arguments in Lemma \ref{lemma2.2}, there exists a constant $C_{11}>0$, depends on $\gamma, s, r$ and $\beta_{1}$, such that
    \begin{equation*}
    \begin{split}
         &\left\|\left[\langle v\rangle^{r-\frac\gamma2}\partial^{\beta_{1}}_{v}\langle v\rangle^{\gamma}, \langle D_{v}\rangle^{2s}\right]\partial^{\alpha}_{x}\partial^{\beta-\beta_{1}}_{v}g\right\|_{L^{2}(\mathbb R^6_{x, v})}\\
         &\le C_{11}\left\|\langle D_{v}\rangle^{2s-1}\left(\langle v\rangle^{\frac\gamma2+r}\partial^{\alpha}_{x}\partial^{\beta-\beta_{1}}_{v}g\right)\right\|_{L^{2}(\mathbb R^6_{x, v})}.
    \end{split}
    \end{equation*}
    %then by using Lemma 2.2 of~\cite{H-4}, it follows that
    %$$\left\|\langle D_{v}\rangle^{2s-1}\left(\langle v\rangle^{\frac\gamma2+r-|\beta_{1}|}\partial^{\alpha}_{x}\partial^{\beta-\beta_{1}}_{v}g\right)\right\|_{L^{2}}\le C_{\gamma, s, r, \beta_{1}}\left\|\langle D_{v}\rangle^{2s-1}\left(\langle v\rangle^{\frac\gamma2+r}\partial^{\alpha}_{x}\partial^{\beta-\beta_{1}}_{v}g\right)\right\|_{L^{2}},$$
    Since $2s-1<1$, it follows that
    \begin{equation*}
    \begin{split}
         &\left\|\langle D_{v}\rangle^{2s-1}\left(\langle v\rangle^{\frac\gamma2+r}\partial^{\alpha}_{x}\partial^{\beta-\beta_{1}}_{v}g\right)\right\|_{L^{2}(\mathbb R^6_{x, v})}\le\sum_{|\sigma|\le1}\left\|\partial^{\sigma}_{v}\left(\langle v\rangle^{\frac\gamma2+r}\partial^{\alpha}_{x}\partial^{\beta-\beta_{1}}_{v}g\right)\right\|_{L^{2}(\mathbb R^6_{x, v})}\\
         &\le C(\gamma, r, \beta_{1})\bigg(\left\|\langle v\rangle^{\frac\gamma2+r}\partial^{\alpha}_{x}\partial^{\beta-\beta_{1}}_{v}g\right\|_{L^{2}(\mathbb R^6_{x, v})}+\sum_{j=1}^{3}\left\|\langle v\rangle^{\frac\gamma2+r}\partial^{\alpha}_{x}\partial^{\beta-\beta_{1}+e_{j}}_{v}g\right\|_{L^{2}(\mathbb R^6_{x, v})}\bigg).
    \end{split}
    \end{equation*}
    Hence, from Cauchy-Schwarz inequality, we have
    \begin{equation*}
    \begin{split}
         &\left|\sum_{|\alpha|\le m}\sum_{|\beta|\le n}\sum_{0<\beta_{1}\le\beta}C_{\beta}^{\beta_{1}}\left(\left[\langle v\rangle^{r-\frac\gamma2}\partial^{\beta_{1}}_{v}\langle v\rangle^{\gamma}, \langle D_{v}\rangle^{2s}\right]\partial^{\alpha}_{x}\partial^{\beta-\beta_{1}}_{v}g, \langle v\rangle^{\frac\gamma2+r}\partial^{\alpha}_{x}\partial^{\beta}_{v}g\right)_{L^{2}(\mathbb R^6_{x, v})}\right|\\
         %&\le\sum_{|\alpha|\le m}\sum_{|\beta|\le n}\sum_{0<\beta_{1}\le\beta}C_{\beta}^{\beta_{1}}\left\|\left[\langle v\rangle^{r-\frac\gamma2}\partial^{\beta_{1}}_{v}\langle v\rangle^{\gamma}, \langle D_{v}\rangle^{2s}\right]\partial^{\alpha}_{x}\partial^{\beta-\beta_{1}}_{v}g\right\|_{L^{2}}\left\|\langle v\rangle^{\frac\gamma2+r}\partial^{\alpha}_{x}\partial^{\beta}_{v}g\right\|_{L^{2}}\\
         &\le\tilde C_{11}\sum_{|\alpha|\le m}\sum_{|\beta|\le n}\left\|\langle v\rangle^{\frac\gamma2+r}\partial^{\alpha}_{x}\partial^{\beta}_{v}g\right\|_{L^{2}(\mathbb R^6_{x, v})}^{2}+\frac{1}{16}\left\|\langle D_{v}\rangle^{s}\left(\langle v\rangle^{\frac\gamma2+r}\partial^{\alpha}_{x}\partial^{\beta}_{v}g\right)\right\|^{2}_{L^{2}(\mathbb R^6_{x, v})}.
    \end{split}
    \end{equation*}
    For $\gamma>0$, applying Lemma \ref{interpolation} with $\epsilon=\frac{1}{4\sqrt{2\tilde C_{11}}}$, and for $\gamma\le0$, applying Lemma \ref{interpolation1} and Cauchy-Schwarz inequality, it follows that
    \begin{equation*}
    \begin{split}
         &\tilde C_{11}\left\|\langle v\rangle^{\frac\gamma2+r}\partial^{\alpha}_{x}\partial^{\beta}_{v}g\right\|_{L^{2}(\mathbb R^6_{x, v})}^{2}\\
         &\le\frac{1}{16}\left\|\langle D_{v}\rangle^{s}\left(\langle v\rangle^{\frac\gamma2+r}\partial^{\alpha}_{x}\partial^{\beta}_{v}g\right)\right\|^{2}_{L^{2}(\mathbb R^6_{x, v})}+C_{12}\left\|\langle v\rangle^{r}\partial^{\alpha}_{x}\partial^{\beta}_{v}g\right\|^{2}_{L^{2}(\mathbb R^6_{x, v})}.
    \end{split}
    \end{equation*}
    So we can get that
    \begin{equation*}
    \begin{split}
         &\left|\sum_{|\alpha|\le m}\sum_{|\beta|\le n}\sum_{0<\beta_{1}\le\beta}C_{\beta}^{\beta_{1}}\left(\left[\langle v\rangle^{r-\frac\gamma2}\partial^{\beta_{1}}_{v}\langle v\rangle^{\gamma}, \langle D_{v}\rangle^{2s}\right]\partial^{\alpha}_{x}\partial^{\beta-\beta_{1}}_{v}g, \langle v\rangle^{\frac\gamma2+r}\partial^{\alpha}_{x}\partial^{\beta}_{v}g\right)_{L^{2}(\mathbb R^6_{x, v})}\right|\\
         &\le C_{12}\left\|g(t)\right\|^{2}_{H^{m}(\mathbb R^{x}; H^{n}_{r}(\mathbb R^{3}_{v}))}+\frac{1}{8}\left\|\langle D_{v}\rangle^{s}\left(\langle v\rangle^{\frac\gamma2+r}\partial^{\alpha}_{x}\partial^{\beta}_{v}g\right)\right\|^{2}_{L^{2}(\mathbb R^6_{x, v})}.
    \end{split}
    \end{equation*}
    Combining these results, it follows that for all $m\in\mathbb N$ and $r\ge0$
    \begin{equation*}
        \begin{split}
             &-\frac{1}{2}\frac{d}{dt}\left\|g(t)\right\|^{2}_{H^{m}(\mathbb R^{3}_{x}; H^{n}_{r}(\mathbb R^{3}_{v}))}+\frac12\sum_{|\alpha|\le m}\sum_{|\beta|\le n}\left\|\langle D_{v}\rangle^{s}\left(\langle v\rangle^{\frac\gamma2+r}\partial^{\alpha}_{x}\partial^{\beta}_{v}g(t)\right)\right\|^{2}_{L^{2}(\mathbb R^{6}_{x, v})}\\
             &\quad+\left\|\langle v\rangle^{\frac\gamma2+s}g(t)\right\|^{2}_{H^{m}(\mathbb R^{3}_{x}; H^{n}_{r}(\mathbb R^{3}_{v}))}-(C_{8}+\tilde C_{10}+C_{12}+1)\left\|g(t)\right\|^{2}_{H^{m}(\mathbb R^{3}_{x}; H^{n}_{r}(\mathbb R^{3}_{v}))}\\
             &\le\left\|g(t)\right\|_{H^{m}(\mathbb R^{3}_{x}; H^{n}_{r}(\mathbb R^{3}_{v}))}\left\|Pg(t)\right\|_{H^{m}(\mathbb R^{3}_{x}; H^{n}_{r}(\mathbb R^{3}_{v}))}+\frac14\left\|g(t)\right\|^{2}_{H^{m+1}(\mathbb R^{3}_{x}; H^{n-1}_{r}(\mathbb R^{3}_{v}))},
        \end{split}
    \end{equation*}
    which implies that for all $m\in\mathbb N$ and $r\ge0$
    \begin{equation*}
        \begin{split}
             &-\frac{d}{dt}\left(e^{2\tilde C_{12}t}\left\|g\right\|^{2}_{H^{m}(\mathbb R^{3}_{x}; H^{n}_{r}(\mathbb R^{3}_{v}))}\right)+2e^{2\tilde C_{12}t}\left\|\langle v\rangle^{\frac\gamma2+s}g\right\|^{2}_{H^{m}(\mathbb R^{3}_{x}; H^{n}_{r}(\mathbb R^{3}_{v}))}\\
             &\quad+e^{2\tilde C_{12}t}\sum_{|\alpha|\le m}\sum_{|\beta|\le n}\left\|\langle D_{v}\rangle^{s}\left(\langle v\rangle^{\frac\gamma2+r}\partial^{\alpha}_{x}\partial^{\beta}_{v}g\right)\right\|^{2}_{L^{2}(\mathbb R^{6}_{x, v})}\\
             %&\quad+2e^{2C_{13}t}\left\|\langle v\rangle^{\frac\gamma2+s}g(t)\right\|^{2}_{H^{m}(\mathbb R^{3}_{x}; H^{n}_{r}(\mathbb R^{3}_{v}))}\\
             &\le2e^{2\tilde C_{12}t}\left\|g\right\|_{H^{m}(\mathbb R^{3}_{x}; H^{n}_{r}(\mathbb R^{3}_{v}))}\left\|Pg\right\|_{H^{m}(\mathbb R^{3}_{x}; H^{n}_{r}(\mathbb R^{3}_{v}))}+\frac12e^{2\tilde C_{12}t}\left\|g\right\|^{2}_{H^{m+1}(\mathbb R^{3}_{x}; H^{n-1}_{r}(\mathbb R^{3}_{v}))},
        \end{split}
    \end{equation*}
    here $\tilde C_{12}=C_{8}+\tilde C_{10}+C_{12}+1$.
    Since $\partial^{\alpha}_{x}\partial^{\beta}_{v}g(T)=0$ and $\gamma+2s>0$, it follows that for all $t\in[0, T]$, $m\in\mathbb N$ and $r\ge0$
    \begin{equation*}
        \begin{split}
             &\left\|g(t)\right\|^{2}_{H^{m}(\mathbb R^{3}_{x}; H^{n}_{r}(\mathbb R^{3}_{v}))}+2\int_{t}^{T}e^{2\tilde C_{12}(\tau-t)}\left\|\langle v\rangle^{\frac\gamma2+s}g(\tau)\right\|^{2}_{H^{m}(\mathbb R^{3}_{x}; H^{n}_{r}(\mathbb R^{3}_{v}))}d\tau\\
             &\quad+\sum_{|\alpha|\le m}\sum_{|\beta|\le n}\int_{t}^{T}e^{2\tilde C_{12}(\tau-t)}\left\|\langle D_{v}\rangle^{s}\left(\langle v\rangle^{\frac\gamma2+r}\partial^{\alpha}_{x}\partial^{\beta}_{v}g(\tau)\right)\right\|^{2}_{L^{2}(\mathbb R^{6}_{x, v})}d\tau\\
             %&\quad+2\int_{t}^{T}e^{2C_{13}(\tau-t)}\left\|\langle v\rangle^{\frac\gamma2+s}g\right\|^{2}_{H^{m}_{x}(H^{n}_{r})}d\tau\\
             &\le2\int_{t}^{T}e^{2\tilde C_{12}(\tau-t)}\left\|g(\tau)\right\|_{H^{m}(\mathbb R^{3}_{x}; H^{n}_{r}(\mathbb R^{3}_{v}))}\left\|Pg(\tau)\right\|_{H^{m}(\mathbb R^{3}_{x}; H^{n}_{r}(\mathbb R^{3}_{v}))}d\tau\\
             &\quad+\frac12\int_{t}^{T}e^{2(\tilde C_{12}-C_{7})(\tau-t)}e^{2C_{7}(\tau-t)}\left\|\langle v\rangle^{\frac\gamma2+s}g(\tau)\right\|^{2}_{H^{m+1}(\mathbb R^{3}_{x}; H^{n-1}_{r}(\mathbb R^{3}_{v}))}d\tau.
        \end{split}
    \end{equation*}
    Then, taking $C_{7}\ge\max\{2\tilde C_{12}, \tilde C_{12}+\frac{1}{2T}\ln\frac{4}{3}\}$, from the induction hypothesis, we can obtain that for all $t\in[0, T]$, $m\in\mathbb N$ and $r\ge0$
    \begin{equation*}
        \begin{split}
             &\left\|g(t)\right\|^{2}_{H^{m}(\mathbb R^{3}_{x}; H^{n}_{r}(\mathbb R^{3}_{v}))}+2\int_{t}^{T}e^{2\tilde C_{12}(\tau-t)}\left\|\langle v\rangle^{\frac\gamma2+s}g(\tau)\right\|^{2}_{H^{m}(\mathbb R^{3}_{x}; H^{n}_{r}(\mathbb R^{3}_{v}))}d\tau\\
             &\quad+\sum_{|\alpha|\le m}\sum_{|\beta|\le n}\int_{t}^{T}e^{2\tilde C_{12}(\tau-t)}\left\|\langle D_{v}\rangle^{s}\left(\langle v\rangle^{\frac\gamma2+r}\partial^{\alpha}_{x}\partial^{\beta}_{v}g(\tau)\right)\right\|^{2}_{L^{2}(\mathbb R^{6}_{x, v})}d\tau\\
             &\le2\int_{t}^{T}e^{\tilde C_{12}(\tau-t)}\left\|g(\tau)\right\|_{H^{m}(\mathbb R^{3}_{x}; H^{n}_{r}(\mathbb R^{3}_{v}))}\left\|Pg(\tau)\right\|_{H^{m}(\mathbb R^{3}_{x}; H^{n}_{r}(\mathbb R^{3}_{v}))}d\tau\\
             &\quad+\frac12\int_{t}^{T}e^{2C_{7}(\tau-t)}\left\|g(\tau)\right\|_{H^{\infty}(\mathbb R^{3}_{x}; H^{n-1}_{r}(\mathbb R^{3}_{v}))}\left\|Pg(\tau)\right\|_{H^{\infty}(\mathbb R^{3}_{x}; H^{n-1}_{r}(\mathbb R^{3}_{v}))}d\tau\\
             &\le\left(2e^{2\tilde C_{12}T}+\frac12e^{2C_{7}T}\right)\int_{t}^{T}\left\|g(\tau)\right\|_{H^{\infty}(\mathbb R^{3}_{x}; H^{n}_{r}(\mathbb R^{3}_{v}))}\left\|Pg(\tau)\right\|_{H^{\infty}(\mathbb R^{3}_{x}; H^{n}_{r}(\mathbb R^{3}_{v}))}d\tau\\
             &\le\left(2\cdot\frac{3}{4}e^{2C_{7}T}+\frac12e^{2C_{7}T}\right)\int_{t}^{T}\left\|g(\tau)\right\|_{H^{\infty}(\mathbb R^{3}_{x}; H^{n}_{r}(\mathbb R^{3}_{v}))}\left\|Pg(\tau)\right\|_{H^{\infty}(\mathbb R^{3}_{x}; H^{n}_{r}(\mathbb R^{3}_{v}))}d\tau\\
             &\le2e^{2C_{7}T}\int_{t}^{T}\left\|g(\tau)\right\|_{H^{\infty}(\mathbb R^{3}_{x}; H^{n}_{r}(\mathbb R^{3}_{v}))}\left\|Pg(\tau)\right\|_{H^{\infty}(\mathbb R^{3}_{x}; H^{n}_{r}(\mathbb R^{3}_{v}))}d\tau.
        \end{split}
    \end{equation*}
    And Therefore, we can get that for all $t\in[0, T]$, $m\in\mathbb N$
    \begin{equation*}
        \begin{split}
             &\left\|g(t)\right\|^{2}_{H^{m}(\mathbb R^{3}_{x}; H^{n}_{r}(\mathbb R^{3}_{v}))}\\
             &\le2e^{2C_{7}T}\int_{t}^{T}\left\|g(\tau)\right\|_{H^{\infty}(\mathbb R^{3}_{x}; H^{n}_{r}(\mathbb R^{3}_{v}))}\left\|Pg(\tau)\right\|_{H^{\infty}(\mathbb R^{3}_{x}; H^{n}_{r}(\mathbb R^{3}_{v}))}d\tau\\
             &\le2e^{2C_{7}T}\left\|g\right\|_{L^{\infty}([0, T]; H^{\infty}(\mathbb R^{3}_{x}; H^{n}_{r}(\mathbb R^{3}_{v})))}\int_{t}^{T}\left\|Pg(\tau)\right\|_{H^{\infty}(\mathbb R^{3}_{x}; H^{n}_{r}(\mathbb R^{3}_{v}))}d\tau,
        \end{split}
    \end{equation*}
    so that we have for all $n\in\mathbb N$ and $r\ge0$
    \begin{equation}\label{sup-g}
        \begin{split}
             \left\|g\right\|_{L^{\infty}([0, T]; H^{\infty}(\mathbb R^{3}_{x}; H^{n}_{r}(\mathbb R^{3}_{v})))}&\le2e^{2C_{7}T}\int_{0}^{T}\left\|Pg(\tau)\right\|_{H^{\infty}(\mathbb R^{3}_{x}; H^{n}_{r}(\mathbb R^{3}_{v}))}d\tau.
        \end{split}
    \end{equation}
    For all $n\in\mathbb N$ and $r\ge0$, considering the subspace
    $$\Omega=\left\{Pg: g\in C^{\infty}([0, T]; S(\mathbb R^{6}_{x, v})), g(T)=0\right\}\subset L^{1}([0, T]; H^{\infty}(\mathbb R^{3}; H^{n}_{r}(\mathbb R^{3}_{v})),$$
   define the linear functional
    \begin{equation*}
        \begin{split}
             F: \Omega&\ \ \ \to\ \ \ \mathbb R\ \ \\
              w=Pg&\ \ \ \mapsto\ \ \ (u_{0}, g(0))_{H^{\infty}(\mathbb R^{3}_{x}, H^{n}_{r}(\mathbb R^{3}_{v}))}+\int_{0}^{T}(f(t), g(t))_{H^{\infty}(\mathbb R^{3}_{x}, H^{n}_{r}(\mathbb R^{3}_{v}))}dt.
        \end{split}
    \end{equation*}
    Let $Pg_{1}=Pg_{2}$ with $g_{1}, g_{2}\in C^{\infty}([0, T]; S(\mathbb R^{6}_{x, v}))$ and $g_{1}(T)=g_{2}(T)=0$, then from \eqref{sup-g}, it follows that
    \begin{equation*}
        \begin{split}
             &\left\|g_{1}-g_{2}\right\|_{L^{\infty}([0, T]; H^{\infty}(\mathbb R^{3}_{x}, H^{n}_{r}(\mathbb R^{3}_{v})))}\\
             &\le2e^{2C_{7}T}\int_{0}^{T}\left\|P\left(g_{1}(\tau)-g_{2}(\tau)\right)\right\|_{H^{\infty}(\mathbb R^{3}_{x}, H^{n}_{r}(\mathbb R^{3}_{v}))}d\tau=0,
        \end{split}
    \end{equation*}
   hence $g_{1}=g_{2}$, so the operator $P$ is injective. From \eqref{sup-g}, the linear functional $F$ is well-defined and continuous
    \begin{equation*}
        \begin{split}
             \left|F(Pg)\right|%&\le\left\|u_{0}\right\|_{H^{\infty}(\mathbb R^{3}_{x}, H^{n}_{r}(\mathbb R^{3}_{v}))}\left\|g(0)\right\|_{H^{\infty}(\mathbb R^{3}_{x}, H^{n}_{r}(\mathbb R^{3}_{v}))}\\
             %&\quad+\int_{0}^{T}\left\|f(t)\right\|_{H^{\infty}(\mathbb R^{3}_{x}, H^{n}_{r}(\mathbb R^{3}_{v}))}\left\|g(t)\right\|_{H^{\infty}(\mathbb R^{3}_{x}, H^{n}_{r}(\mathbb R^{3}_{v}))}dt\\
             &\le\left(\left\|u_{0}\right\|_{H^{\infty}(\mathbb R^{3}_{x}, H^{n}_{r}(\mathbb R^{3}_{v}))}+\left\|f\right\|_{L^{1}([0, T]; H^{\infty}(\mathbb R^{3}_{x}, H^{n}_{r}(\mathbb R^{3}_{v})))}\right)\\
             &\quad\times\left\|g\right\|_{L^{\infty}\left([0,T]; H^{\infty}(\mathbb R^{3}_{x}, H^{n}_{r}(\mathbb R^{3}_{v}))\right)}\\
             &\le2e^{2C_{7}T}\left(\left\|u_{0}\right\|_{H^{\infty}(\mathbb R^{3}_{x}, H^{n}_{r}(\mathbb R^{3}_{v}))}+\left\|f\right\|_{L^{1}([0, T]; H^{\infty}(\mathbb R^{3}_{x}, H^{n}_{r}(\mathbb R^{3}_{v})))}\right)\\
             &\quad\times\int_{0}^{T}\left\|Pg(\tau)\right\|_{H^{\infty}(\mathbb R^{3}_{x}, H^{n}_{r}(\mathbb R^{3}_{v}))}d\tau.
        \end{split}
    \end{equation*}
    By using the Hahn-Banach theorem, the linear functional $F$ can be extended as a continuous linear functional on $L^{1}\left([0, T]; H^{\infty}(\mathbb R^{3}_{x}; H^{n}_{r}(\mathbb R^{3}_{v}))\right)$ with
$$\left\|F\right\|\le2e^{2C_{7}T}\left(\left\|u_{0}\right\|_{H^{\infty}(\mathbb R^{3}_{x}, H^{n}_{r}(\mathbb R^{3}_{v}))}+\left\|f\right\|_{L^{1}([0, T]; H^{\infty}(\mathbb R^{3}_{x}, H^{n}_{r}(\mathbb R^{3}_{v})))}\right).$$
Hence there exists $u\in L^{\infty}([0, T]; H^{\infty}(\mathbb R^{3}_{x}; H^{n}_{r}(\mathbb R^{3}_{v})))$ satisfying
$$\int_{0}^{T}\left(u, w\right)_{H^{\infty}(\mathbb R^{3}_{x}, H^{n}_{r}(\mathbb R^{3}_{v}))}dt=F(w), \quad \forall w\in L^{1}\left([0, T]; H^{\infty}(\mathbb R^{3}_{x}, H^{n}_{r}(\mathbb R^{3}_{v}))\right),$$
and
   \begin{equation*}
        \begin{split}
             &\left\|u\right\|_{L^{\infty}([0, T]; H^{\infty}(\mathbb R^{3}_{x}, H^{n}_{r}(\mathbb R^{3}_{v})))}\\
             &\le2e^{2C_{7}T}\left(\left\|u_{0}\right\|_{H^{\infty}(\mathbb R^{3}_{x}, H^{n}_{r}(\mathbb R^{3}_{v}))}+\left\|f\right\|_{L^{1}([0, T]; H^{\infty}(\mathbb R^{3}_{x}, H^{n}_{r}(\mathbb R^{3}_{v})))}\right).
            %&\quad+2e^{2C_{5}T}\int_{0}^{T}\left\|f(\tau)\right\|_{L^{2}(\mathbb R^6_{x, v})}d\tau.
        \end{split}
    \end{equation*}
Since $\Omega\subset L^{1}([0, T]; H^{\infty}_{x}(H^{n}_{r}))$, it follows that for all $g\in C^{\infty}([0, T]; S(\mathbb R^{6}_{x, v}))$
\begin{equation*}
\begin{split}
     F(Pg)&=\int_{0}^{T}\left(u, Pg\right)_{H^{\infty}(\mathbb R^{3}_{x}, H^{n}_{r}(\mathbb R^{3}_{v}))}dt\\
     &=(u_{0}, g(0))_{H^{\infty}(\mathbb R^{3}_{x}, H^{n}_{r}(\mathbb R^{3}_{v}))}+\int_{0}^{T}(f(t), g(t))_{H^{\infty}(\mathbb R^{3}_{x}, H^{n}_{r}(\mathbb R^{3}_{v}))}dt.
\end{split}
\end{equation*}
Thus for all $n\in\mathbb N$ and $r\ge0$, $u\in L^{\infty}([0, T]; H^{\infty}(\mathbb R^{3}_{x}, H^{n}_{r}(\mathbb R^{3}_{v})))$ is a weak solution of the Cauchy problem \eqref{1-1}.

Next, we would show by induction that for all integers $k$,
\begin{equation}\label{S-0}
     \frac{d^{k}}{dt^{k}}u(t)\in L^\infty ([0, T]; H^{\infty}(\mathbb R^{3}_{x}; \mathcal S(\mathbb R^{3}_{v}))).
\end{equation}
For $k=0$, we have already proved. Assume that \eqref{S-0} holds for $k$, then we try to prove
$$\frac{d^{k+1}}{dt^{k+1}}u(t)\in L^\infty ([0, T]; H^{\infty}(\mathbb R^{3}_{x}; \mathcal S(\mathbb R^{3}_{v})) ).$$
Since $u$ is the solution of the Cauchy problem \eqref{1-1}, we have
$$\frac{d^{k+1}}{dt^{k+1}}u(t)=\partial^{k}_{t}f-v\cdot\nabla_{x}\partial^{k}_{t}u-\langle v\rangle^{\gamma}\langle D_{v}\rangle^{2s}\partial^{k}_{t}u-\langle v\rangle^{\gamma+2s}\partial^{k}_{t}u.$$
By using the induction hypothesis, it follows that
$$v\cdot\nabla_{x}\partial^{k}_{t}u,\ \langle v\rangle^{\gamma}\langle D_{v}\rangle^{2s}\partial^{k}_{t}u,\ \langle v\rangle^{\gamma+2s}\partial^{k}_{t}u\in L^\infty ([0, T]; H^{\infty}(\mathbb R^{3}_{x}; \mathcal S(\mathbb R^{3}_{v})) ),$$
since $f\in C^{\infty}([0,T]; H^{\infty}(\mathbb R^{3}_{x}; \mathcal S(\mathbb R^{3}_{v})))$, we have
$$\frac{d^{k+1}}{dt^{k+1}}u(t)\in L^\infty ([0, T]; H^{\infty}(\mathbb R^{3}_{x}; \mathcal S(\mathbb R^{3}_{v})) ).$$
Hence, we can obtain that $u\in C^{\infty}([0, T]; H^{\infty}(\mathbb R^{3}_{x}; \mathcal S(\mathbb R^{3}_{v})))$.

It remains to show \eqref{3-1}. Since $u$ is the solution of the Cauchy problem \eqref{1-1}, we have
     \begin{equation}\label{3-1-1}
     \begin{split}
         &\left(\left(\partial_{t}+v\cdot\nabla_{x}\right)u, u\right)_{L^{2}(\mathbb R^6_{x, v})}+\left(\langle v\rangle^{\gamma}\langle D_{v}\rangle^{2s}u, u\right)_{L^{2}(\mathbb R^6_{x, v})}\\
         &\quad+\left\|\langle v\rangle^{\frac\gamma2+s}u(t)\right\|^{2}_{L^{2}(\mathbb R^6_{x, v})}=(f, u)_{L^{2}(\mathbb R^6_{x, v})}.
     \end{split}
     \end{equation}
     Noting that $\left(\left(\partial_{t}+v\cdot\nabla_{x}\right)u, u\right)_{L^{2}(\mathbb R^6_{x, v})}=\frac{d}{dt}\left\|u(t)\right\|^{2}_{L^{2}(\mathbb R^6_{x, v})}$ and
     $$\left(\langle v\rangle^{\gamma}\langle D_{v}\rangle^{2s}u, u\right)=\left\|\langle D_{v}\rangle^{s}\left(\langle v\rangle^{\gamma/2}u\right)\right\|_{L^{2}(\mathbb R^6_{x, v})}+\left(\left[\langle v\rangle^{\gamma/2}, \langle D_{v}\rangle^{2s}\right], \langle v\rangle^{\gamma/2}u\right),$$
     using \eqref{I-2} with $g=u$ and $|\alpha|=r=0$, it follows that for all $\gamma>-2s$ and $0<s<1$
     \begin{equation}\label{dt-0-1}
     \begin{split}
         &\frac{d}{dt}\left\|u(t)\right\|^{2}_{L^{2}(\mathbb R^6_{x, v})}+\left\|\langle D_{v}\rangle^{s}\left(\langle v\rangle^{\frac\gamma2}u(t)\right)\right\|^{2}_{L^{2}(\mathbb R^{6}_{x, v})}+\left\|\langle v\rangle^{\frac\gamma2+s}u(t)\right\|^{2}_{L^{2}(\mathbb R^6_{x, v})}\\
         &\le2C_{0}\left\|u(t)\right\|^{2}_{L^{2}(\mathbb R^6_{x, v})}+2\left\|f(t)\right\|^{2}_{L^{2}(\mathbb R^6_{x, v})},
     \end{split}
     \end{equation}
     with $C_{0}$ depends on $\gamma$ and $s$. For all $t\in[0,T]$, integrating from $0$ to $t$, one has
     \begin{equation*}
     \begin{split}
         &\left\|u(t)\right\|^{2}_{L^{2}(\mathbb R^6_{x, v})}+\int_{0}^{t}\left\|\langle D_{v}\rangle^{s}\left(\langle v\rangle^{\frac\gamma2}u(\tau)\right)\right\|^{2}_{L^{2}(\mathbb R^{6}_{x, v})}d\tau+\int_{0}^{t}\left\|\langle v\rangle^{\frac\gamma2+s}u(\tau)\right\|^{2}_{L^{2}(\mathbb R^6_{x, v})}d\tau\\
         &\le2C_{0}\int_{0}^{t}\left\|u(\tau)\right\|^{2}_{L^{2}(\mathbb R^6_{x, v})}d\tau+2\int_{0}^{t}\left\|f(\tau)\right\|^{2}_{L^{2}(\mathbb R^6_{x, v})}d\tau+\left\|u_{0}\right\|^{2}_{L^{2}(\mathbb R^6_{x, v})},
     \end{split}
     \end{equation*}
by using Gronwall inequality, we have
$$\left\|u(t)\right\|^{2}_{L^{2}(\mathbb R^6_{x, v})}\le e^{2C_{0}T}\left(2\int_{0}^{t}\left\|f(\tau)\right\|^{2}_{L^{2}(\mathbb R^6_{x, v})}d\tau+\left\|u_{0}\right\|^{2}_{L^{2}(\mathbb R^6_{x, v})}\right).$$
And therefore, it follows that for any $t\in[0,T]$,
     %$$B_{0}\ge\left(2\tilde C_{6}Te^{2T\tilde C_{6}}+1\right)\left(2T\left\|f\right\|^{2}_{L^{\infty}\left([0, T]; L^{2}(\mathbb R^6_{x, v})\right)}+\left\|u_{0}\right\|^{2}_{L^{2}(\mathbb R^6_{x, v})}\right),$$
     %it follows that for any $t\in[0,T]$,
     \begin{equation*}
     \begin{split}
         &\left\|u(t)\right\|^{2}_{L^{2}(\mathbb R^{6}_{x, v})}+\int_{0}^{t}\left\|\langle D_{v}\rangle^{s}\left(\langle v\rangle^{\frac\gamma2}u(\tau)\right)\right\|^{2}_{L^{2}(\mathbb R^{6}_{x, v})}d\tau\\
         &\quad+\int_{0}^{t}\left\|\langle v\rangle^{\frac\gamma2+s}u(\tau)\right\|^{2}_{L^{2}(\mathbb R^6_{x, v})}d\tau\\
         &\le\left(2C_{0}Te^{2TC_{0}}+1\right)\left(2\int_{0}^{t}\left\|f(\tau)\right\|^{2}_{L^{2}(\mathbb R^6_{x, v})}d\tau+\left\|u_{0}\right\|^{2}_{L^{2}(\mathbb R^6_{x, v})}\right).
     \end{split}
     \end{equation*}
\end{proof}

\section{Proof of Theorem \ref{thm1}}

In this section, we give the proof of Theorem \ref{thm1}. Let $\mu\in\mathbb R$, set $\Phi_{\mu}(t, v)=t\langle v\rangle^{\mu}$. 
\begin{lemma}\label{lemma 4.1}
Let $f\in C^{\infty}([0, T]; \mathcal G^{\frac{1}{2\tilde s}}(\mathbb R_{x}^{3}; S^{\frac{1}{2\tilde s}}_{\frac{1}{\gamma/2+s}}(\mathbb R^{3}_{v})))$, then there exist constants $A, \tilde A$ such that for all $t\in[0, T]$
$$\left\|M^{k}_{2\tilde s}f(t)\right\|_{L^{2}(\mathbb R^{6}_{x, v})}\le (T+1)^{2k}A^{k+1}k!,$$
and
$$\left\|\Phi_{\gamma/2+s}^{k}f(t)\right\|_{L^{2}(\mathbb R^{6}_{x, v})}\le T^{k}\tilde A^{k+1}k!,$$
where $A, \tilde A$ is independent of $k$.
\end{lemma}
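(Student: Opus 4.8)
The plan is to read off both estimates from the Gevrey and Gelfand-Shilov hypotheses on $f$ by passing to the Fourier transform. Write $\eta,\xi$ for the variables dual to $x,v$. Since $M_{2\tilde s}(t,D_x,D_v)$ is a Fourier multiplier in $(x,v)$ (it depends only on $D_x,D_v$), its $k$-th power is the multiplier with symbol $\big(M_{2\tilde s}(t,\eta,\xi)\big)^k$, and Lemma \ref{lemma  2.3} applied with $\alpha=2\tilde s$ gives
$$\big(M_{2\tilde s}(t,\eta,\xi)\big)^k\le C_{2\tilde s}^{k}\,t^{k}\,\big(1+|\xi|^2+t^2|\eta|^2\big)^{\tilde s k}.$$
By Plancherel's theorem,
$$\left\|M^k_{2\tilde s}f(t)\right\|^2_{L^2(\mathbb R^6)}\le C_{2\tilde s}^{2k}\,t^{2k}\int_{\mathbb R^6}\big(1+|\xi|^2+t^2|\eta|^2\big)^{2\tilde s k}\,|\hat f(t,\eta,\xi)|^2\,d\eta\,d\xi.$$

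Next I would split the weight. Using $(a+b)^N\le 2^N(a^N+b^N)$ with $a=1+|\xi|^2$, $b=t^2|\eta|^2$, $N=2\tilde s k$, followed by $|\eta|^{4\tilde s k}\le(1+|\eta|^2)^{2\tilde s k}$, one gets
$$\big(1+|\xi|^2+t^2|\eta|^2\big)^{2\tilde s k}\le 2^{2\tilde s k}\Big((1+|\xi|^2)^{2\tilde s k}+t^{4\tilde s k}(1+|\eta|^2)^{2\tilde s k}\Big).$$
Integrating this against $|\hat f|^2$ identifies the two pieces with $\|(1-\Delta_v)^{\tilde s k}f(t)\|_{L^2(\mathbb R^6)}^2$ and $t^{4\tilde s k}\|(1-\Delta_x)^{\tilde s k}f(t)\|_{L^2(\mathbb R^6)}^2$. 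By Definition \ref{definition1}, membership of $f(t)$ in $\mathcal G^{\frac1{2\tilde s}}(\mathbb R^3_x;S^{\frac1{2\tilde s}}_{\frac1{\gamma/2+s}}(\mathbb R^3_v))$ provides a constant $B$, uniform for $t\in[0,T]$ since $f\in C^\infty([0,T];\cdots)$, with $\|(1-\Delta_v)^{\tilde s k}f(t)\|_{L^2(\mathbb R^6)}\le B^{k+1}k!$ (the Gevrey factor of the $v$-Gelfand-Shilov norm) and $\|(1-\Delta_x)^{\tilde s k}f(t)\|_{L^2(\mathbb R^6)}\le B^{k+1}k!$ (the $x$-Gevrey norm). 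Since $\tilde s\le\frac12$ we have $4\tilde s k\le 2k$, hence $t^{2k}\big(1+t^{4\tilde s k}\big)\le 2(T+1)^{4k}$; collecting the constants yields $\|M^k_{2\tilde s}f(t)\|^2_{L^2}\le (T+1)^{4k}A^{2k+2}(k!)^2$ for a suitable $A=A(C_{2\tilde s},B,\tilde s)$, and taking square roots gives the first asserted inequality.

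The second inequality is immediate. Since $\Phi^k_{\gamma/2+s}(t,v)=t^{k}\langle v\rangle^{(\gamma/2+s)k}$ and $t\le T$,
$$\left\|\Phi^k_{\gamma/2+s}f(t)\right\|_{L^2(\mathbb R^6)}=t^{k}\left\|\langle v\rangle^{(\gamma/2+s)k}f(t)\right\|_{L^2(\mathbb R^6)}\le T^{k}\,\tilde A^{k+1}k!,$$
where the last step is exactly the decay factor of the $v$-Gelfand-Shilov norm of $f(t)$: with $\nu=\frac1{\gamma/2+s}$ one has $(1+|v|^2)^{\frac{k}{2\nu}}=\langle v\rangle^{(\gamma/2+s)k}$, and the bound is again uniform for $t\in[0,T]$.

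The only point demanding care is the first step: confirming that $M^k_{2\tilde s}$ is indeed the Fourier multiplier with symbol $(M_{2\tilde s})^k$ and that the two-sided comparison of Lemma \ref{lemma  2.3} survives exponentiation with a constant of the form $C_{2\tilde s}^k$. Once that is in place, the remainder is bookkeeping with the constants of Definition \ref{definition1}. That $2\tilde s k$ need not be an integer is harmless, as every operator involved is defined directly as a Fourier multiplier of the form $(1+|\cdot|^2)^{\tilde s k}$.
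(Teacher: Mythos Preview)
Your proof is correct and follows the same strategy as the paper: both reduce the first estimate to a Fourier-side bound via Lemma~\ref{lemma  2.3} and then invoke the Gevrey/Gelfand--Shilov hypotheses on $f$, and both treat the second estimate as an immediate consequence of the decay part of the Gelfand--Shilov norm together with $\Phi_{\gamma/2+s}^{k}=t^{k}\langle v\rangle^{(\gamma/2+s)k}$. The only cosmetic difference is that the paper asserts directly that $f(t)\in\mathcal G^{1/(2\tilde s)}(\mathbb R^{6}_{x,v})$ and bounds $\|(1-\Delta_v-t^2\Delta_x)^{\tilde s k}f(t)\|_{L^2}\le (T+1)^{2\tilde s k}C_{14}^{k+1}k!$ in one stroke (using $1+|\xi|^2+t^2|\eta|^2\le(1+T)^2(1+|\xi|^2+|\eta|^2)$), whereas you split the symbol via $(a+b)^N\le 2^N(a^N+b^N)$ and appeal separately to the $x$- and $v$-Gevrey bounds; this extra step is harmless and arguably more transparent given how the hypothesis is stated.
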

\begin{proof}
     From Lemma \ref{lemma  2.3}, it follows that 
     $$M_{2\tilde s}(t, D_{x}, D_{v})\sim t\left(1-\Delta_{v}-t^{2}\Delta_{x}\right)^{\tilde s},$$ 
     so that there exists a constant $C_{13}$, depends on $s$, such that  
     $$\left\|M^{k}_{2\tilde s}f(t)\right\|_{L^{2}(\mathbb R^{6})}\le(C_{13}t)^{k}\left\|\left(1-\Delta_{v}-t^{2}\Delta_{x}\right)^{\tilde sk}f(t)\right\|_{L^{2}(\mathbb R^{6})}.$$
     Since $f\in C^{\infty}([0, T]; \mathcal G^{\frac{1}{2\tilde s}}(\mathbb R_{x}^{3}; S^{\frac{1}{2\tilde s}}_{\frac{1}{\gamma/2+s}}(\mathbb R^{3}_{v})))$, we have $f(t)\in\mathcal G^{\frac{1}{2\tilde s}}(\mathbb R_{x, v}^{6})$, then there exist constants $C_{14}, \tilde C_{14}$ such that 
     $$\left\|\left(1-\Delta_{v}-t^{2}\Delta_{x}\right)^{\tilde sk}f(t)\right\|_{L^{2}(\mathbb R^{6}_{x, v})}\le(T+1)^{2\tilde sk}(C_{14})^{k+1}k!,$$
     and
     $$\left\|\left(1+|\cdot|^{2}\right)^{\tilde sk}f(t, x, \cdot)\right\|_{L^{2}(\mathbb R^{3}_{v})}\le(\tilde C_{14})^{k+1}k!.$$
     And therefore, let $A=C_{14}(C_{13}+1)$ and $\tilde A=\tilde C_{14}$, then since $\tilde s\le1/2$, we can obtain that for all $t\in[0, T]$
     $$\left\|M^{k}_{2\tilde s}f(t)\right\|_{L^{2}(\mathbb R^{6})}\le (C_{13}t)^{k}(T+1)^{2\tilde sk}(C_{14})^{k+1}k!\le (T+1)^{2k}A^{k+1}k!,$$
     and 
     $$\left\|\Phi_{\gamma/2+s}^{k}f(t)\right\|_{L^{2}(\mathbb R^{6}_{x, v})}\le T^{k}\tilde A^{k+1}k!.$$
\end{proof}

\begin{prop}\label{prop 4.1}
    For $0<s<1$, $T>0$ and $\gamma+2s>0$. Assume $u_0\in L^{2}(\mathbb R^6_{x, v})$ and $f\in C^{\infty}([0, T]; \mathcal G^{\frac{1}{2\tilde s}}(\mathbb R_{x}^{3}; S^{\frac{1}{2\tilde s}}_{\frac{1}{\gamma/2+s}}(\mathbb R^{3}_{v})))$. Let $u$ be the solution of Cauchy problem \eqref{1-1},  then there exists $B_{1}>0, C_T>0$ such that for any $t\in[0,T]$ and $k\in\mathbb N$,
    \begin{equation}\label{4-1}
    \begin{split}
        &\left\|M^{k}_{2\tilde s}u(t)\right\|^{2}_{L^{2}(\mathbb R^{6}_{x, v})}+\int_{0}^{t}\left\|\langle v\rangle^{\gamma/2+s}M^{k}_{2\tilde s}u(t)\right\|^{2}_{L^{2}(\mathbb R^{6}_{x, v})}d\tau\\
        &\quad+\int_{0}^{t}\left\|\langle D_{v}\rangle^{s}\left(\langle v\rangle^{\gamma/2}M^{k}_{2\tilde s}u(t)\right)\right\|^{2}_{L^{2}(\mathbb R^{6}_{x, v})}d\tau\le\left(B_{1}^{k+1}k!\right)^{2}.
    \end{split}
    \end{equation}
\end{prop}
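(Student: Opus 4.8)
Let me set $V_k(t)=M^k_{2\tilde s}(t,D_x,D_v)u(t)$ and
$$b_k(t)^2=\|V_k(t)\|_{L^2}^2+\int_0^t\Big(\|\langle v\rangle^{\gamma/2+s}V_k\|_{L^2}^2+\|\langle D_v\rangle^s(\langle v\rangle^{\gamma/2}V_k)\|_{L^2}^2\Big)d\tau ,$$
so that \eqref{4-1} is the claim $b_k(t)\le B_1^{k+1}k!$ for a suitably large $B_1$, which I would prove by induction on $k$. By Lemma \ref{lemma3.1}, $u\in C^\infty([0,T];H^\infty(\mathbb R^3_x;\mathcal S(\mathbb R^3_v)))$ (first for $u_0\in H^\infty$, then for $u_0\in L^2$ by approximating $u_0$ in $L^2$ and passing to the limit, $B_1$ depending on $u_0$ only through $\|u_0\|_{L^2}$), so each $V_k$ is well defined and $C^\infty$ in $t$, and $V_k(0)=M^k_{2\tilde s}(0)u_0=0$ for $k\ge1$ since $M_{2\tilde s}(0)=0$. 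The case $k=0$ is exactly \eqref{3-1} together with $\int_0^T\|f\|_{L^2}^2\le T\|f\|_{L^\infty L^2}^2$, fixing a first lower bound for $B_1$.

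The algebraic heart is the transport commutator. Since $M_{2\tilde s}$ and $\langle D_v\rangle$ are Fourier multipliers, $\partial_tM_{2\tilde s}=\langle D_v+tD_x\rangle^{2\tilde s}$, and $\sum_j\eta_j\partial_{\xi_j}M_{2\tilde s}(t,\eta,\xi)=\langle\xi+t\eta\rangle^{2\tilde s}-\langle\xi\rangle^{2\tilde s}$, a direct computation gives $[\partial_t+v\cdot\nabla_x,M_{2\tilde s}]=\langle D_v\rangle^{2\tilde s}$, hence $[\partial_t+v\cdot\nabla_x,M^k_{2\tilde s}]=kM^{k-1}_{2\tilde s}\langle D_v\rangle^{2\tilde s}$. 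Applying $M^k_{2\tilde s}$ to \eqref{1-1}, commuting through $\langle v\rangle^\gamma\langle D_v\rangle^{2s}$ and $\langle v\rangle^{\gamma+2s}$, and pairing with $V_k$ in $L^2(\mathbb R^6_{x,v})$ (where $v\cdot\nabla_x$ is skew-adjoint) gives
\begin{equation*}
\begin{split}
&\tfrac12\tfrac{d}{dt}\|V_k\|^2+\|\langle D_v\rangle^s(\langle v\rangle^{\gamma/2}V_k)\|^2+\|\langle v\rangle^{\gamma/2+s}V_k\|^2=(M^k_{2\tilde s}f,V_k)+k(\langle D_v\rangle^{2\tilde s}V_{k-1},V_k)\\
&\qquad-([M^k_{2\tilde s},\langle v\rangle^\gamma]\langle D_v\rangle^{2s}u,V_k)-([M^k_{2\tilde s},\langle v\rangle^{\gamma+2s}]u,V_k)-([\langle v\rangle^{\gamma/2},\langle D_v\rangle^{2s}]V_k,\langle v\rangle^{\gamma/2}V_k).
\end{split}
\end{equation*}
The last term is absorbed into $\tfrac14\|\langle D_v\rangle^s(\langle v\rangle^{\gamma/2}V_k)\|^2+C\|V_k\|^2$ exactly as in the proof of Lemma \ref{lemma3.1}, via \eqref{0-s-1/2}--\eqref{1/2-s-1} and Lemmas \ref{interpolation}--\ref{interpolation1}; and $(M^k_{2\tilde s}f,V_k)\le\tfrac12\|M^k_{2\tilde s}f\|^2+\tfrac12\|V_k\|^2$ with $\|M^k_{2\tilde s}f\|\le(T+1)^{2k}A^{k+1}k!$ by Lemma \ref{lemma 4.1}.

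For the two $\langle v\rangle$-commutators I would write $[M^k_{2\tilde s},\langle v\rangle^{\gamma+2s}]u=\langle v\rangle^{\gamma/2+s}\big(\langle v\rangle^{-(\gamma/2+s)}[M^k_{2\tilde s},\langle v\rangle^{\gamma+2s}]u\big)$ and pair the parenthesized factor against $\langle v\rangle^{\gamma/2+s}V_k$, and $[M^k_{2\tilde s},\langle v\rangle^\gamma]\langle D_v\rangle^{2s}u=\langle v\rangle^{\gamma/2}A_k(\langle D_v\rangle^{2s}u)$ and pair $A_k(\langle D_v\rangle^{2s}u)$ against $\langle v\rangle^{\gamma/2}V_k$; Lemma \ref{commutator M} for the former and Lemma \ref{A-k} with $r=2s-1\le0$ for the latter (together with \eqref{0-s-1/2}--\eqref{1/2-s-1} and the interpolation lemmas), followed by a Young inequality that absorbs the $V_k$-parts into the dissipation and into $C\|V_k\|^2$, leave control by $\|\langle v\rangle^{\gamma/2+s}V_k\|$ (resp.\ $\|\langle D_v\rangle^s(\langle v\rangle^{\gamma/2}V_k)\|$) times $\sum_{p=0}^{k-1}(C(T+1))^{k-p}C_k^p\,h_p(\tau)$ with $h_p\in\{\|\langle v\rangle^{\gamma/2+s}V_p\|,\ \|\langle D_v\rangle^s(\langle v\rangle^{\gamma/2}V_p)\|+\|\langle v\rangle^{\gamma/2+s}V_p\|+\|V_p\|\}$. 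The only genuinely new term is $k(\langle D_v\rangle^{2\tilde s}V_{k-1},V_k)$: since $\tilde s\le s$, I split $\langle D_v\rangle^{2\tilde s}=\langle D_v\rangle^{\tilde s}\langle D_v\rangle^{\tilde s}$ and bound $\|\langle D_v\rangle^{\tilde s}V_j\|\le\epsilon\big(\|\langle D_v\rangle^s(\langle v\rangle^{\gamma/2}V_j)\|+\|\langle v\rangle^{\gamma/2+s}V_j\|\big)+C_\epsilon\|V_j\|$ — for $\gamma\ge0$ by passing $\langle v\rangle^{\pm\gamma/2}$ through $\langle D_v\rangle^{\tilde s}$ using Lemma \ref{lemma2.2}, for $-2s<\gamma<0$ by the Hölder-type interpolation Lemma \ref{interpolation1} combining the two dissipation norms as in Lemma \ref{lemma3.1}, with $C_\epsilon$ degrading only polynomially. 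Taking $\epsilon\sim1/k$, Young's inequality turns $k(\langle D_v\rangle^{2\tilde s}V_{k-1},V_k)$ into a piece absorbed into $\tfrac14\|\langle D_v\rangle^s(\langle v\rangle^{\gamma/2}V_k)\|^2+\tfrac14\|\langle v\rangle^{\gamma/2+s}V_k\|^2+C\|V_k\|^2$ with $C$ independent of $k$, plus $Ck^2\big(\|\langle D_v\rangle^s(\langle v\rangle^{\gamma/2}V_{k-1})\|^2+\|\langle v\rangle^{\gamma/2+s}V_{k-1}\|^2+\|V_{k-1}\|^2\big)$.

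Finally I would integrate on $[0,t]$, use $V_k(0)=0$ ($k\ge1$), absorb three quarters of the dissipation of $V_k$ into the left-hand side, and run a Grönwall argument in $\|V_k\|^2$ with a $k$-independent constant, using $\|\langle v\rangle^{\gamma/2+s}\cdot\|^2\ge\|\cdot\|^2$ to bound $\int_0^t\|V_p\|^2\le Tb_p^2$. The binomial sums must be handled with \emph{Minkowski's} inequality in time rather than Cauchy--Schwarz: $\big(\int_0^t(\sum_p(C(T+1))^{k-p}C_k^p h_p)^2d\tau\big)^{1/2}\le\sum_p(C(T+1))^{k-p}C_k^p\big(\int_0^t h_p^2d\tau\big)^{1/2}\le(1+\sqrt T)\sum_p(C(T+1))^{k-p}C_k^p b_p$. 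With the ansatz $b_p=B_1^{p+1}p!$ the Gevrey-$1$ identity
$$\sum_{p=0}^{k-1}(C(T+1))^{k-p}\frac{k!}{p!(k-p)!}B_1^{p+1}p!=B_1^{k+1}k!\sum_{q=1}^{k}\frac{(C(T+1)/B_1)^q}{q!}\le\frac{2C(T+1)}{B_1}B_1^{k+1}k!$$
is $\le\tfrac1{10}B_1^{k+1}k!$ once $B_1\ge20\,C(T+1)$; combined with $k^2b_{k-1}^2=B_1^{-2}b_k^2$ this gives $b_k(t)^2\le C_T(T+1)^{4k}A^{2k+2}(k!)^2+C_T'B_1^{-2}b_k(t)^2$, and choosing $B_1\ge\max\{(T+1)^2A,\ \sqrt{2C_T}A,\ \sqrt{2C_T'}\}$ and large enough for $k=0$ closes the induction, proving \eqref{4-1}. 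The main obstacle is exactly the term $k(\langle D_v\rangle^{2\tilde s}V_{k-1},V_k)$ produced by $[\partial_t+v\cdot\nabla_x,M^k_{2\tilde s}]$: absorbing its $\langle D_v\rangle^{\tilde s}$-pieces into the dissipation is delicate for soft potentials $-2s<\gamma<0$, where neither dissipation term alone suffices, and one must also check that the accompanying factor $k$ does not destroy the factorial growth, which works because $k\,b_{k-1}\sim B_1^{-1}b_k$.
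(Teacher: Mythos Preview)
Your skeleton is the paper's: approximate $u_0$ by smooth data so that $u_\delta\in C^\infty([0,T];H^\infty(\mathbb R^3_x;\mathcal S(\mathbb R^3_v)))$, compute $[\partial_t+v\cdot\nabla_x,M^k_{2\tilde s}]=k\langle D_v\rangle^{2\tilde s}M^{k-1}_{2\tilde s}$, derive the energy identity with the same five right-hand terms, handle the $[\langle v\rangle^{\gamma/2},\langle D_v\rangle^{2s}]$ piece exactly as in Lemma~\ref{lemma3.1}, bound the two weight commutators with Lemmas~\ref{commutator M} and~\ref{A-k}, close the binomial sums by Minkowski in time followed by the Gevrey series $\sum_{q\ge1}(C/B_1)^q/q!$, and finish with a $k$-independent Gronwall before letting $\delta\to0$. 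This is exactly the paper's route, including the crucial observation that $V_k(0)=0$ for $k\ge1$. One small correction: for the term you call $I_4$ the paper pairs $\langle D_v\rangle^{-s}A_k(\langle D_v\rangle^{2s}u)$ against $\langle D_v\rangle^s(\langle v\rangle^{\gamma/2}V_k)$ and applies Lemma~\ref{A-k} with $r=-s$, not your $r=2s-1$ (which is not $\le0$ when $s>1/2$); this choice makes the dissipation appear on both sides of the Cauchy--Schwarz directly.

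Where you genuinely diverge from the paper is in the transport term $k(\langle D_v\rangle^{2\tilde s}V_{k-1},V_k)$, and your treatment there has a gap. The paper does \emph{not} use an $\epsilon$-interpolation; it claims the \emph{direct} bound
\[
\|\langle D_v\rangle^{\tilde s}w\|_{L^2}\le(1+\tilde C_\gamma)(1+C_1)\,\|\langle D_v\rangle^{\tilde s}(\langle v\rangle^{\gamma/2}w)\|_{L^2}
\]
from the pointwise inequality $|1-\langle v\rangle^{\gamma/2}|\le\tilde C_\gamma\langle v\rangle^{\gamma/2}$ and a commutator, so that one Young step gives $I_2\le2\tilde C_1k^2\|\langle D_v\rangle^{s}(\langle v\rangle^{\gamma/2}V_{k-1})\|^2+\tfrac18\|\langle D_v\rangle^{s}(\langle v\rangle^{\gamma/2}V_k)\|^2$ with a $k$-independent constant in front. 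Your proposal instead asserts $\|\langle D_v\rangle^{\tilde s}V_j\|\le\epsilon D_j+C_\epsilon\|V_j\|$ and takes $\epsilon\sim1/k$; but then $C_\epsilon$ grows with $k$ (as you yourself say, polynomially), so the cross term $kC_\epsilon\|\langle D_v\rangle^{\tilde s}V_{k-1}\|\,\|V_k\|$ leaves a coefficient of order $k^{1+\alpha}$ on $\|V_k\|^2$ after Young, contradicting your claim that the Gronwall constant is ``independent of $k$''. Moreover, for soft potentials $-2s<\gamma<0$ the inequality you invoke is not furnished by Lemmas~\ref{interpolation}--\ref{interpolation1} and in fact fails: test against $w_R(v)=e^{iR^2v_1}\phi(v-Re_1)$ to see $\|\langle D_v\rangle^{\tilde s}w_R\|\sim R^{2\tilde s}$ while $\|\langle D_v\rangle^s(\langle v\rangle^{\gamma/2}w_R)\|+\|\langle v\rangle^{\gamma/2+s}w_R\|+\|w_R\|\sim R^{2s+\gamma/2}$, which for $s\le1/2$ (so $\tilde s=s$) and $\gamma<0$ grows strictly slower. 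So the ``main obstacle'' you flag is real, but your proposed resolution does not close; the paper's route is to bypass the $\epsilon$-mechanism entirely with a direct comparison to $\|\langle D_v\rangle^s(\langle v\rangle^{\gamma/2}V_p)\|$.
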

\begin{proof}
    Fixing $\varphi\ge0$, a $C^{\infty}$ function of compact support, defined in $\mathbb R^{3}_{x}\times\mathbb R^{3}_{v}$, with the properties that $\varphi_{\delta}(x, v)=\delta^{-6}\varphi(\frac{x}{\delta}, \frac{v}{\delta})$ and $\hat\varphi(0)=1$.
%$$\varphi_{\epsilon}(x, v)=\epsilon^{-6}\varphi\left(\frac{x}{\epsilon}, \frac{v}{\epsilon}\right) \quad {\rm and} \quad \int_{\mathbb R^{6}_{x, v}}\varphi(x, v)dxdv=1.$$
Let  $u_{0, \delta}=(u_{0}*\varphi_{\delta})e^{-\delta|v|^{2}}$ with $0<\delta<1$. Since $u_{0}\in L^{2}(\mathbb R^{6}_{x, v})$, from the Leibniz formula, one has for all $m, n\in\mathbb N$ and $r\ge0$
$$\langle v\rangle^{r}\partial^{\alpha}_{x}\partial^{\beta}_{v}u_{0, \delta}=\sum_{\beta'\le\beta}C_{\beta}^{\beta'}\langle v\rangle^{r}\left(u_{0}*\partial^{\alpha}_{x}\partial^{\beta'}_{v}\varphi_{\delta}\right)\partial^{\beta-\beta'}_{v}e^{-\delta|v|^{2}},\quad \forall |\alpha|\le m, |\beta|\le n,$$
then from Minkowski inequality and Young inequality, we can obtain that for all $m, n\in\mathbb N$ and $r\ge0$
%$$\left\|u_{0, \epsilon}\right\|^{2}_{H^{m}_{x}(H^{n}_{r})}\le\sum_{|\alpha|\le m}\sum_{|\beta|\le n}\bigg(\sum_{\beta_{1}+\beta_{2}=\beta}C_{\beta}^{\beta_{1}}\left(\int_{\mathbb R^{6}_{x, v}}\left|\langle v\rangle^{r}\left(u_{0}*\partial^{\alpha}_{x}\partial^{\beta_{1}}_{v}\varphi_{\epsilon}\right)\partial^{\beta_{2}}_{v}e^{-\epsilon|v|^{2}}\right|^{2}dxdv\right)^{\frac12}\bigg)^{2}$$
\begin{equation*}\label{u-delta}
\begin{split}
     &\left\|u_{0, \delta}\right\|^{2}_{H^{m}_{x}(H^{n}_{r})}\\
     &\le\sum_{|\alpha|\le m}\sum_{|\beta|\le n}\bigg(\sum_{\beta'\le\beta}C_{\beta}^{\beta'}\left\|\langle v\rangle^{r}\left(u_{0}*\partial^{\alpha}_{x}\partial^{\beta'}_{v}\varphi_{\delta}\right)\partial^{\beta-\beta'}_{v}e^{-\delta|v|^{2}}\right\|_{L^{2}(\mathbb R^{6}_{x, v})}\bigg)^{2}\\
     &\le\sum_{|\alpha|\le m}\sum_{|\beta|\le n}\bigg(\sum_{\beta'\le\beta}C_{\beta}^{\beta'}\left\|\langle v\rangle^{r}\partial^{\beta-\beta'}_{v}e^{-\delta|v|^{2}}\right\|_{L^{\infty}(\mathbb R^{6}_{x, v})}\\
     &\quad\times\left\|\partial^{\alpha}_{x}\partial^{\beta'}_{v}\varphi_{\delta}\right\|_{L^{1}(\mathbb R^{6}_{x, v})}\left\|u_{0}\right\|_{L^{2}(\mathbb R^{6}_{x, v})}\bigg)^{2}\le C_{\delta}\left\|u_{0}\right\|^{2}_{L^{2}(\mathbb R^{6}_{x, v})},
\end{split}
\end{equation*}
hence $u_{0, \delta}\in H^{\infty}(\mathbb R^{3}_{x}; \mathcal S(\mathbb R^{3}_{v}))$. In particular, for $m=n=r=0$, by using Minkowski inequality, we have 
\begin{equation}\label{u-delta}
\begin{split}
    \left\|u_{0, \delta}\right\|_{L^{2}(\mathbb R^{6}_{x, v})}&=\left(\int_{\mathbb R^{6}_{x, v}}\left|\int_{\mathbb R^{6}}u_{0}(x-x', v-v')\varphi_{\delta}(x ', v ')dx'dv'\right|^{2}\right)^{\frac12}\\
    &\le\int_{\mathbb R^{6}}\left(\int_{\mathbb R^{6}}\left|u_{0}(x-x', v-v')\right|^{2}dxdv\right)^{\frac12}\varphi_{\delta}(x ', v ')dx ' d v '\\
    &=\left\|u_{0}\right\|_{L^{2}(\mathbb R^{6}_{x, v})}\int_{\mathbb R^{6}}\varphi_{\delta}(x ', v ')dx ' d v '=\left\|u_{0}\right\|_{L^{2}(\mathbb R^{6}_{x, v})}.
\end{split}
\end{equation}
Let $u_{\delta}=(u*\varphi_{\delta})e^{-\delta|v|^{2}}$ satisfies
\begin{equation}\label{1-1B}
\left\{
\begin{aligned}
 &\partial_t u_{\delta}+v\cdot\nabla_x u_{\delta}+\langle v\rangle^{\gamma}\langle D_{v}\rangle^{2s}u_{\delta}+\langle v\rangle^{\gamma+2s}u_{\delta}=f,\\
 &u_{\delta}|_{t=0}=u_{0, \delta},
\end{aligned}
\right.
\end{equation}
then by applying Lemma \ref{lemma3.1} and \eqref{u-delta} with $m=n=r=0$, it follows that $u_{\delta}\in C^{\infty}\left([0, T]; H^{\infty}(\mathbb R^{3}_{x}; \mathcal S(\mathbb R^{3}_{v}))\right)$ and for all $0<\delta<1$
    \begin{equation}\label{u-0}
    \begin{split}
        &\left\|u_{\delta}(t)\right\|^{2}_{L^{2}(\mathbb R^{6}_{x, v})}+\int_{0}^{t}\left\|\langle D_{v}\rangle^{s}\left(\langle v\rangle^{\frac\gamma2}u_{\delta}(\tau)\right)\right\|^{2}_{L^{2}(\mathbb R^{6}_{x, v})}d\tau\\
         &\quad+\int_{0}^{t}\left\|\langle v\rangle^{\frac\gamma2+s}u_{\delta}(\tau)\right\|^{2}_{L^{2}(\mathbb R^6_{x, v})}d\tau\\
         &\le\left(2C_{0}Te^{2TC_{0}}+1\right)\left(2\int_{0}^{t}\left\|f(\tau)\right\|^{2}_{L^{2}(\mathbb R^6_{x, v})}d\tau+\left\|u_{0}\right\|^{2}_{L^{2}(\mathbb R^6_{x, v})}\right)=B_{0},
    \end{split}
    \end{equation}
    where $B_0$ is independent of $\delta$.

    We now prove that for all $t\in[0, T]$ and $0<\delta<1$
    \begin{equation}\label{4-1-2}
    \begin{split}
        &\left\|M^{k}_{2\tilde s}u_{\delta}(t)\right\|^{2}_{L^{2}(\mathbb R^{6}_{x, v})}+\int_{0}^{t}\left\|\langle v\rangle^{\gamma/2+s}M^{k}_{2\tilde s}u_{\delta}(t)\right\|^{2}_{L^{2}(\mathbb R^{6}_{x, v})}d\tau\\
        &\quad+\int_{0}^{t}\left\|\langle D_{v}\rangle^{s}\left(\langle v\rangle^{\gamma/2}M^{k}_{2\tilde s}u_{\delta}(t)\right)\right\|^{2}_{L^{2}(\mathbb R^{6}_{x, v})}d\tau\le\left(B_{1}^{k+1}k!\right)^{2},
    \end{split}
    \end{equation}
    with $B_1$ independent of $\delta$ and $k$.

    Since $u_{\delta}$ satisfies \eqref{1-1B}, then
    \begin{equation*}
    \begin{split}
        &\left(\partial_{t}+v\cdot\nabla_{x}\right)M^{k}_{2\tilde s}u_{\delta}+\left[M^{k}_{2\tilde s}, \partial_{t}+v\cdot\nabla_{x}\right]u_{\delta}+\langle v\rangle^{\gamma}\left((1-\Delta_{v})^{s}+\langle v\rangle^{2s}\right)M^{k}_{2\tilde s}u_{\delta}\\
        &\qquad+\left[M^{k}_{2\tilde s}, \langle v\rangle^{\gamma}\left((1-\Delta_{v})^{s}+\langle v\rangle^{2s}\right)\right]u_{\delta}=M^{k}_{2\tilde s}f.
    \end{split}
    \end{equation*}
    By using the Fourier transform, we have
    \begin{equation*}
    \begin{split}
         &\left[M^{k}_{2\tilde s}(t, D_{x}, D_{v}), \partial_{t}+v\cdot\nabla_{x}\right]=\mathcal F^{-1}\left(\left[M^{k}_{2\tilde s}(t, \eta, \xi), \partial_{t}-\eta\cdot\nabla_{\xi}\right]\right)\\
         &=-k\mathcal F^{-1}\left(\langle\xi\rangle^{2\tilde s}M^{k-1}_{2\tilde s}(t, \eta, \xi)\right)=-k(1-\Delta_{v})^{\tilde s}M^{k-1}_{2\tilde s}(t, D_{x}, D_{v}),
    \end{split}
    \end{equation*}
    taking the scalar product with respect to $M^{k}_{2\tilde s}(t, D_{x}, D_{v})u_{\delta}$ in $L^{2}(\mathbb R^{6}_{x, v})$, one has
    \begin{equation}\label{dt-0}
    \begin{split}
        &\frac12\frac{d}{dt}\left\|M^{k}_{2\tilde s}u_{\delta}\right\|^{2}_{L^{2}}+\left\|\langle v\rangle^{\gamma/2+s}M^{k}_{2\tilde s}u_{\delta}\right\|^{2}_{L^{2}}+\left(\langle v\rangle^{\gamma}\langle D_{v}\rangle^{2s}M^{k}_{2\tilde s}u_{\delta}, M^{k}_{2\tilde s}u_{\delta}\right)_{L^{2}}\\
        &=k\left((1-\Delta_{v})^{\tilde s}M^{k-1}_{2\tilde s}u_{\delta}, M^{k}_{2\tilde s}u_{\delta}\right)+\left(M^{k}_{2\tilde s}f, M^{k}_{2\tilde s}u_{\delta}\right)\\
        &\quad+\left(\left[\langle v\rangle^{\gamma}\left((1-\Delta_{v})^{s}+\langle v\rangle^{2s}\right), M^{k}_{2\tilde s}\right]u_{\delta}, M^{k}_{2\tilde s}u_{\delta}\right).
    \end{split}
    \end{equation}
    Noting that
    \begin{equation*}
    \begin{split}
         &\left(\langle v\rangle^{\gamma}\langle D_{v}\rangle^{2s}M^{k}_{2\tilde s}u_{\delta}, M^{k}_{2\tilde s}u_{\delta}\right)\\
         &=\left(\left[\langle v\rangle^{\gamma/2}, \langle D_{v}\rangle^{2s}\right]M^{k}_{2\tilde s}u_{\delta}, \langle v\rangle^{\gamma/2}M^{k}_{2\tilde s}u_{\delta}\right)+\left\|\langle D_{v}\rangle^{s}\left(\langle v\rangle^{\gamma/2}M^{k}_{2\tilde s}u_{\delta}\right)\right\|^{2}_{L^{2}},
    \end{split}
    \end{equation*}
    substituting it into \eqref{dt-0}, and by using the Cauchy-Schwarz inequality, one gets
    \begin{equation}\label{dt}
    \begin{split}
        &\frac12\frac{d}{dt}\left\|M^{k}_{2\tilde s}u_{\delta}(t)\right\|^{2}_{L^{2}(\mathbb R^{6}_{x, v})}+\left\|\langle v\rangle^{\gamma/2+s}M^{k}_{2\tilde s}u_{\delta}(t)\right\|^{2}_{L^{2}(\mathbb R^{6}_{x, v})}\\
        &\quad+\left\|\langle D_{v}\rangle^{s}\left(\langle v\rangle^{\gamma/2}M^{k}_{2\tilde s}u_{\delta}(t)\right)\right\|^{2}_{L^{2}(\mathbb R^{6}_{x, v})}\\
        &\le\left\|M^{k}_{2\tilde s}f(t)\right\|_{L^{2}(\mathbb R^{6}_{x, v})}\left\|M^{k}_{2\tilde s}u_{\delta}(t)\right\|_{L^{2}(\mathbb R^{6}_{x, v})}\\
        &\quad+k\left\|\langle D_{v}\rangle^{\tilde s}M^{k-1}_{2\tilde s}u_{\delta}(t)\right\|_{L^{2}(\mathbb R^{6}_{x, v})}\left\|\langle D_{v}\rangle^{\tilde s}M^{k}_{2\tilde s}u_{\delta}(t)\right\|_{L^{2}(\mathbb R^{6}_{x, v})}\\
        &\quad+\left(\left[\langle D_{v}\rangle^{2s}, \langle v\rangle^{\gamma/2}\right]M^{k}_{2\tilde s}u_{\delta}(t), \langle v\rangle^{\gamma/2}M^{k}_{2\tilde s}u_{\delta}(t)\right)_{L^{2}(\mathbb R^{6}_{x, v})}\\
        &\quad+\left(\left[\langle v\rangle^{\gamma}\langle D_{v}\rangle^{2s}, M^{k}_{2\tilde s}\right]u_{\delta}(t), M^{k}_{2\tilde s}u_{\delta}(t)\right)_{L^{2}(\mathbb R^{6}_{x, v})}\\
        &\quad+\left(\left[\langle v\rangle^{\gamma+2s}, M^{k}_{2\tilde s}\right]u_{\delta}(t), M^{k}_{2\tilde s}u_{\delta}(t)\right)_{L^{2}(\mathbb R^{6}_{x, v})}=I_{1}+I_{2}+I_{3}+I_{4}+I_{5}.
    \end{split}
    \end{equation}

    We show \eqref{4-1-2} holds by induction on the index $k$. For $k=0$, taking $B_{1}\ge B_{0}$, then it is enough to take in \eqref{u-0}. Assume $k\ge1$ and \eqref{4-1-2} is true for $0\le m\le k-1$,
    \begin{equation}\label{m}
    \begin{split}
        &\left\|M^{m}_{2\tilde s}u_{\delta}(t)\right\|^{2}_{L^{2}(\mathbb R^{6}_{x, v})}+\int_{0}^{t}\left\|\langle v\rangle^{\gamma/2+s}M^{m}_{2\tilde s}u_{\delta}(\tau)\right\|^{2}_{L^{2}(\mathbb R^{6}_{x, v})}d\tau\\
        &\quad+\int_{0}^{t}\left\|\langle D_{v}\rangle^{s}\left(\langle v\rangle^{\gamma/2}M^{m}_{2\tilde s}u_{\delta}(\tau)\right)\right\|^{2}_{L^{2}(\mathbb R^{6}_{x, v})}d\tau\le\left(B_{1}^{m+1}m!\right)^{2}.
    \end{split}
    \end{equation}
    Now, we prove that \eqref{m} is true for $m=k$. For the term $I_{1}$, since $\gamma+2s>0$, by using Cauchy-Schwarz inequality, we have
    $$I_{1}\le2\left\|M^{k}_{2\tilde s}f(t)\right\|^{2}_{L^{2}(\mathbb R^{6}_{x, v})}+\frac18\left\|\langle v\rangle^{\gamma/2+s}M^{k}_{2\tilde s}u_{\delta}(t)\right\|^{2}_{L^{2}(\mathbb R^{6}_{x, v})}.$$
    For the term $I_{2}$. By using the Taylor formula, we have
     \begin{equation*}
     \begin{split}
          \left|\langle v\rangle^{\gamma/2}-1\right|&\le\sum_{j=1}^{3}\int_{0}^{1}\left|\partial_{j}\langle\theta v\rangle^{\gamma/2}\right|d\theta|v_{j}|\le C_{\gamma}\int_{0}^{1}\langle\theta v\rangle^{\gamma/2-1}|\theta v|d\theta,%\le C_{\gamma}\int_{0}^{1}\langle\theta v\rangle^{\gamma/2}d\theta,
     \end{split}
    \end{equation*}
    if $\gamma\ge0$, then $\langle\theta v\rangle^{\gamma/2}\le\langle v\rangle^{\gamma/2}$; if $-2s<\gamma<0$, then $\langle\theta v\rangle^{\gamma/2}\le\langle v\rangle^{\gamma/2}\theta^{\gamma/2}$, so that
    \begin{equation*}%\label{gamma}
     \begin{split}
          \left|\langle v\rangle^{\gamma/2}-1\right|\le C_{\gamma}\langle v\rangle^{\gamma/2}\max\left\{1, \int_{0}^{1}\theta^{\gamma/2}d\theta\right\}\le\tilde C_{\gamma}\langle v\rangle^{\gamma/2},
     \end{split}
    \end{equation*}
    here $\tilde C_{\gamma}$ is the constant depends on $\gamma$. Since $0<\tilde s\le1/2$, using \eqref{0-s-1/2}, it follows that for any $p\in\mathbb N$
    \begin{equation*}
    \begin{split}
         &\left\|\langle D_{v}\rangle^{\tilde s}M^{p}_{2\tilde s}u_{\delta}\right\|_{L^{2}(\mathbb R^{6}_{x, v})}\\
         &\le\left\|\langle v\rangle^{\frac\gamma2}\langle D_{v}\rangle^{\tilde s}M^{p}_{2\tilde s}u_{\delta}\right\|_{L^{2}(\mathbb R^{6}_{x, v})}+\left\|\left(1-\langle v\rangle^{\frac\gamma2}\right)\langle D_{v}\rangle^{\tilde s}M^{p}_{2\tilde s}u_{\delta}\right\|_{L^{2}(\mathbb R^{6}_{x, v})}\\
         %&\le\left(\tilde C_{\gamma}+1\right)\left\|\langle v\rangle^{\gamma/2}\langle D_{v}\rangle^{\tilde s}M^{j}_{2\tilde s}u\right\|_{L^{2}(\mathbb R^{6}_{x, v})}\\
         &\le\left(\tilde C_{\gamma}+1\right)\left(\left\|\langle D_{v}\rangle^{\tilde s}\left(\langle v\rangle^{\frac\gamma2}M^{p}_{2\tilde s}u_{\delta}\right)\right\|_{L^{2}(\mathbb R^{6}_{x, v})}+\left\|\left[\langle v\rangle^{\frac\gamma2}, \langle D_{v}\rangle^{\tilde s}\right]M^{p}_{2\tilde s}u_{\delta}\right\|_{L^{2}(\mathbb R^{6}_{x, v})}\right)\\
         &\le\left(\tilde C_{\gamma}+1\right)\left(\left\|\langle D_{v}\rangle^{\tilde s}\left(\langle v\rangle^{\frac\gamma2}M^{p}_{2\tilde s}u_{\delta}\right)\right\|_{L^{2}(\mathbb R^{6}_{x, v})}+C_{1}\left\|\langle v\rangle^{\frac\gamma2}M^{p}_{2\tilde s}u_{\delta}\right\|_{L^{2}(\mathbb R^{6}_{x, v})}\right)\\
         &\le\left(\tilde C_{\gamma}+1\right)\left(C_{1}+1\right)\left\|\langle D_{v}\rangle^{\tilde s}\left(\langle v\rangle^{\frac\gamma2}M^{p}_{2\tilde s}u_{\delta}\right)\right\|_{L^{2}(\mathbb R^{6}_{x, v})}.
    \end{split}
    \end{equation*}
    Thus, by using Cauchy-Schwarz inequality, one has
    \begin{equation*}
    \begin{split}
         I_{2}&\le k\left\|\langle D_{v}\rangle^{\tilde s}M^{k-1}_{2\tilde s}u_{\delta}(t)\right\|_{L^{2}(\mathbb R^{6}_{x, v})}\left\|\langle D_{v}\rangle^{\tilde s}M^{k}_{2\tilde s}u_{\delta}(t)\right\|_{L^{2}(\mathbb R^{6}_{x, v})}\\
         &\le k\left(\tilde C_{\gamma}+1\right)^{2}\left(C_{1}+1\right)^{2}\left\|\langle D_{v}\rangle^{s}\left(\langle v\rangle^{\frac\gamma2}M^{k-1}_{2\tilde s}u_{\delta}(t)\right)\right\|_{L^{2}(\mathbb R^{6}_{x, v})}\\
         &\quad\times\left\|\langle D_{v}\rangle^{s}\left(\langle v\rangle^{\frac\gamma2}M^{k}_{2\tilde s}u_{\delta}(t)\right)\right\|_{L^{2}(\mathbb R^{6}_{x, v})}\\
         &\le2k^{2}\tilde C_{1}\left\|\langle D_{v}\rangle^{s}\left(\langle v\rangle^{\frac\gamma2}M^{k-1}_{2\tilde s}u_{\delta}(t)\right)\right\|^{2}_{L^{2}(\mathbb R^{6}_{x, v})}+\frac18\left\|\langle D_{v}\rangle^{s}\left(\langle v\rangle^{\frac\gamma2}M^{k}_{2\tilde s}u_{\delta}(t)\right)\right\|^{2}_{L^{2}(\mathbb R^{6}_{x, v})},
    \end{split}
    \end{equation*}
    here we use the fact $0<\tilde s\le s$, and the constant $\tilde C_{1}$ depends on $C_{1}$ and $\gamma$.

    For $I_{3}$, since $u_{\delta}\in H^{\infty}(\mathbb R^{3}_{x}; \mathcal S(\mathbb R^{3}_{v}))$, we have $M^{k}_{2\tilde s}u_{\delta}\in H^{\infty}(\mathbb R^{3}_{x}; \mathcal S(\mathbb R^{3}_{v}))$.
    we first restrict to the case $0<s\le1/2$,  by using \eqref{0-s-1/2}, it follows that
    \begin{equation*}
    \begin{split}
         &\left|I_{3}\right|\le C_{1}\left\|\langle v\rangle^{\gamma/2}M^{k}_{2\tilde s}u_{\delta}(t)\right\|^{2}_{L^{2}(\mathbb R^{6}_{x, v})}.
    \end{split}
    \end{equation*}
    Then we consider the case $1/2<s<1$. By using \eqref{1/2-s-1} and the Cauchy- Schwarz inequality, it follows that
    \begin{equation*}
    \begin{split}
         \left|I_{3}\right|%&\le C_{1}\left\|\langle D_{v}\rangle^{2s-1}\left(\langle v\rangle^{\gamma/2}M^{k}_{2\tilde s}u\right)\right\|_{L^{2}(\mathbb R^{6}_{x, v})}\left\|\langle v\rangle^{\gamma/2}M^{k}_{2\tilde s}u\right\|_{L^{2}(\mathbb R^{6}_{x, v})}\\
         &\le C_{1}\left\|\langle D_{v}\rangle^{2s-1}\left(\langle v\rangle^{\gamma/2}M^{k}_{2\tilde s}u_{\delta}(t)\right)\right\|^{2}_{L^{2}(\mathbb R^{6}_{x, v})},
    \end{split}
    \end{equation*}
    here we use the fact $2s-1>0$ if $1/2<s<1$. For the above two cases, since 
    $$M^{k}_{2\tilde s}(t, D_{x}, D_{v})\sim \left(t\left(1-\Delta_{v}-t^{2}\Delta_{x}\right)^{\tilde s}\right)^{k}$$ 
    and $u_{\delta}\in H^{\infty}(\mathbb R^{3}_{x}; \mathcal S(\mathbb R^{3}_{v}))$, it follows that
    $$\left\|\langle v\rangle^{\gamma/2}M^{k}_{2\tilde s}u_{\delta}\right\|^{2}_{L^{2}(\mathbb R^{6}_{x, v})}=\int_{\mathbb R^{3}_{x}}\left\|\langle\cdot\rangle^{\gamma/2}M^{k}_{2\tilde s}u_{\delta}(t, x, \cdot)\right\|^{2}_{L^{2}(\mathbb R^{3}_{v})}dx,$$
    and
    $$\left\|\langle D_{v}\rangle^{2s-1}\left(\langle v\rangle^{\gamma/2}M^{k}_{2\tilde s}u_{\delta}\right)\right\|^{2}_{L^{2}(\mathbb R^{6}_{x, v})}=\int_{\mathbb R^{3}_{x}}\left\|\langle D_{v}\rangle^{2s-1}\left(\langle v\rangle^{\gamma/2}M^{k}_{2\tilde s}u_{\delta}(t, x, \cdot)\right)\right\|^{2}_{L^{2}(\mathbb R^{3}_{v})}dx,$$
    as the argument in Lemma \ref{lemma3.1}, if $\gamma>0$, then applying Lemma \ref{interpolation} with $\epsilon=\frac{1}{2\sqrt{2C_{1}}}$,
    \begin{equation*}
    \begin{split}
        &C_{1}\left\|\langle\cdot\rangle^{\gamma/2}M^{k}_{2\tilde s}u_{\delta}(t, x, \cdot)\right\|^{2}_{L^{2}(\mathbb R^{3}_{v})}\\
        &\le C_{1}\left\|\langle D_{v}\rangle^{s/2}\left(\langle\cdot\rangle^{\gamma/2}M^{k}_{2\tilde s}u_{\delta}(t, x, \cdot)\right)\right\|^{2}_{L^{2}(\mathbb R^{3}_{v})}\\
        &\le C_{1}\left(\frac{1}{2\sqrt{2C_{1}}}\left\|\langle D_{v}\rangle^{s}\left(\langle\cdot\rangle^{\gamma/2}M^{k}_{2\tilde s}u_{\delta}(t, x, \cdot)\right)\right\|_{L^{2}(\mathbb R^{3}_{v})}+\tilde C_{1}\left\|M^{k}_{2\tilde s}u_{\delta}(t, x, \cdot)\right\|_{L^{2}(\mathbb R^{3}_{v})}\right)^{2}\\
        &\le\frac14\left\|\langle D_{v}\rangle^{s}\left(\langle\cdot\rangle^{\gamma/2}M^{k}_{2\tilde s}u_{\delta}(t, x, \cdot)\right)\right\|^{2}_{L^{2}(\mathbb R^{3}_{v})}+2C_{1}\left(\tilde C_{1}\right)^{2}\left\|M^{k}_{2\tilde s}u_{\delta}(t, x, \cdot)\right\|^{2}_{L^{2}(\mathbb R^{3}_{v})},
    \end{split}
    \end{equation*}
    and
    \begin{equation*}
    \begin{split}
        &C_{1}\left\|\langle D_{v}\rangle^{2s-1}\left(\langle\cdot\rangle^{\gamma/2}M^{k}_{2\tilde s}u_{\delta}(t, x, \cdot)\right)\right\|^{2}_{L^{2}(\mathbb R^{3}_{v})}\\
        %&\le C_{1}\left(\frac{1}{2\sqrt{2C_{1}}}\left\|\langle D_{v}\rangle^{s}\left(\langle\cdot\rangle^{\gamma/2}M^{k}_{2\tilde s}u(t, x, \cdot)\right)\right\|_{L^{2}(\mathbb R^{3}_{v})}+\tilde C_{1}\left\|M^{k}_{2\tilde s}u(t, x, \cdot)\right\|_{L^{2}(\mathbb R^{3}_{v})}\right)^{2}\\
        &\le\frac14\left\|\langle D_{v}\rangle^{s}\left(\langle\cdot\rangle^{\gamma/2}M^{k}_{2\tilde s}u_{\delta}(t, x, \cdot)\right)\right\|^{2}_{L^{2}(\mathbb R^{3}_{v})}+2C_{1}\left(\tilde C_{1}\right)^{2}\left\|M^{k}_{2\tilde s}u_{\delta}(t, x, \cdot)\right\|^{2}_{L^{2}(\mathbb R^{3}_{v})},
    \end{split}
    \end{equation*}
    here $\tilde C_{1}$ are the constant that depend on $C_{1}$. If $-2s<\gamma\le0$, using Lemma \ref{interpolation1} and Cauchy-Schwarz inequality, then it follows that
    \begin{equation*}
    \begin{split}
         &C_{1}\left\|\langle\cdot\rangle^{\gamma/2}M^{k}_{2\tilde s}u_{\delta}(t, x, \cdot)\right\|^{2}_{L^{2}(\mathbb R^{3}_{v})}\\
         &\le\frac14\left\|\langle D_{v}\rangle^{s}\left(\langle\cdot\rangle^{\gamma/2}M^{k}_{2\tilde s}u_{\delta}(t, x, \cdot)\right)\right\|^{2}_{L^{2}(\mathbb R^{3}_{v})}+C_{15}\left\|M^{k}_{2\tilde s}u_{\delta}(t, x, \cdot)\right\|^{2}_{L^{2}(\mathbb R^{3}_{v})},
     \end{split}
     \end{equation*}
     and
     \begin{equation*}
    \begin{split}
         &C_{1}\left\|\langle D_{v}\rangle^{2s-1}\left(\langle\cdot\rangle^{\gamma/2}M^{k}_{2\tilde s}u_{\delta}(t, x, \cdot)\right)\right\|^{2}_{L^{2}(\mathbb R^{3}_{v})}\\
         &\le\frac14\left\|\langle D_{v}\rangle^{s}\left(\langle\cdot\rangle^{\gamma/2}M^{k}_{2\tilde s}u_{\delta}(t, x, \cdot)\right)\right\|^{2}_{L^{2}(\mathbb R^{3}_{v})}+C_{15}\left\|M^{k}_{2\tilde s}u_{\delta}(t, x, \cdot)\right\|^{2}_{L^{2}(\mathbb R^{3}_{v})},
     \end{split}
     \end{equation*}
     here $C_{15}$ is the constant that depends on $s$ and $\gamma$. Plugging these back into $I_{3}$, we can obtain that for all $\gamma>-2s$ and $0<s<1$
    $$|I_{3}|\le\frac14\left\|\langle D_{v}\rangle^{s}\left(\langle v\rangle^{\gamma/2}M^{k}_{2\tilde s}u_{\delta}(t)\right)\right\|^{2}_{L^{2}(\mathbb R^{6}_{x, v})}+\tilde C_{15}\left\|M^{k}_{2\tilde s}u_{\delta}(t)\right\|^{2}_{L^{2}(\mathbb R^{6}_{x, v})},$$
    here $\tilde C_{15}=\max\{2C_{1}(\tilde C_{1})^{2}, C_{15}\}$. 
    
    Next, we consider the term $I_{4}$. Set $A_{k}=\langle v\rangle^{-\gamma/2}\left[\langle v\rangle^{\gamma}, M^{k}_{2\tilde s}\right]$, then
    \begin{equation*}
    \begin{split}
    I_{4}%&=\left(\left[\langle v\rangle^{\gamma}\langle D_{v}\rangle^{2s}, M^{k}_{2\tilde s}\right]u, M^{k}_{2\tilde s}u\right)\\
    &=\left(\langle v\rangle^{\gamma}M^{k}_{2\tilde s}\langle D_{v}\rangle^{2s}u_{\delta}, M^{k}_{2\tilde s}u_{\delta}\right)-\left(M^{k}_{2\tilde s}\left(\langle v\rangle^{\gamma}\langle D_{v}\rangle^{2s}u_{\delta}\right), M^{k}_{2\tilde s}u_{\delta}\right)\\
    &=\left(\left[\langle v\rangle^{\gamma}, M^{k}_{2\tilde s}\right]\langle D_{v}\rangle^{2s}u_{\delta}, M^{k}_{2\tilde s}u_{\delta}\right)\\
    %&=\left(\langle D_{v}\rangle^{-s}\left(\langle v\rangle^{-\gamma/2}\left[\langle v\rangle^{\gamma}, M^{k}_{2\tilde s}\right]\langle D_{v}\rangle^{2s}u\right), \langle D_{v}\rangle^{s}\left(\langle v\rangle^{\gamma/2}M^{k}_{2\tilde s}u\right)\right)\\
    &=\left(\langle D_{v}\rangle^{-s}A_{k}\langle D_{v}\rangle^{2s}u_{\delta}, \langle D_{v}\rangle^{s}\left(\langle v\rangle^{\gamma/2}M^{k}_{2\tilde s}u_{\delta}\right)\right).
    \end{split}
    \end{equation*}
    From Lemma \ref{A-k} and Cauchy-Schwarz inequality, one has
   \begin{equation*}
    \begin{split}
        \left|I_{4}\right|&\le 2\left(\sum_{p=0}^{k-1}\left(C_{4}(t+1)\right)^{k-p}C_{k}^{p}\left\|\langle D_{v}\rangle^{-s}\left(\langle v\rangle^{\frac\gamma2}M^{p}_{2\tilde s}\langle D_{v}\rangle^{2s}u_{\delta}(t)\right)\right\|_{L^{2}(\mathbb R^{6}_{x, v})}\right)^{2}\\
        &\quad+\frac18\left\|\langle D_{v}\rangle^{s}\left(\langle v\rangle^{\frac\gamma2}M^{k}_{2\tilde s}u_{\delta}(t)\right)\right\|^{2}_{L^{2}(\mathbb R^{6}_{x, v})},
        %&\le\sum_{p=0}^{k-1}\frac{2}{\left((k-p)!\right)^{2}}\sum_{p=0}^{k-1}\left(\frac{k!\left(C_{4}(t+1)\right)^{k-p}}{p!}\right)^{2}\left\|\langle D_{v}\rangle^{s}\left(\langle v\rangle^{\frac\gamma2}M^{p}_{2\tilde s}u\right)\right\|^{2}_{L^{2}(\mathbb R^{6}_{x, v})}\\
        %&\quad+\frac18\left\|\langle D_{v}\rangle^{s}\left(\langle v\rangle^{\frac\gamma2}M^{k}_{2\tilde s}u\right)\right\|^{2}_{L^{2}(\mathbb R^{6}_{x, v})},
   \end{split}
   \end{equation*}
   from Lemma 2.2 of~\cite{H-4}, it follows that
   $$\left\|\langle D_{v}\rangle^{-s}\left(\langle v\rangle^{\frac\gamma2}M^{p}_{2\tilde s}\langle D_{v}\rangle^{2s}u_{\delta}\right)\right\|_{L^{2}(\mathbb R^{6}_{x, v})}\lesssim\left\|\langle D_{v}\rangle^{s}\left(\langle v\rangle^{\frac\gamma2}M^{p}_{2\tilde s}u_{\delta}\right)\right\|^{2}_{L^{2}(\mathbb R^{6}_{x, v})},$$
   hence, by using Cauchy-Schwarz inequality, we can get
   \begin{equation*}
    \begin{split}
        \left|I_{4}\right|&\le 2\left(\sum_{p=0}^{k-1}(\tilde C_{4}(t+1))^{k-p}C_{k}^{p}\left\|\langle D_{v}\rangle^{s}(\langle v\rangle^{\frac\gamma2}M^{p}_{2\tilde s}u_{\delta}(t))\right\|_{L^{2}(\mathbb R^{6}_{x, v})}\right)^{2}\\
        &\quad+\frac18\left\|\langle D_{v}\rangle^{s}(\langle v\rangle^{\frac\gamma2}M^{k}_{2\tilde s}u_{\delta}(t))\right\|^{2}_{L^{2}(\mathbb R^{6}_{x, v})}.
   \end{split}
   \end{equation*}  
    Now, it remains to consider $I_{5}$. Noting that
    \begin{equation*}
    \begin{split}
         I_{5}%&=\left(\left[\langle v\rangle^{\gamma+2s}, M^{k}_{2\tilde s}\right]u, M^{k}_{2\tilde s}u\right)_{L^{2}(\mathbb R^{6}_{x, v})}\\
         &=\left(\langle v\rangle^{-\gamma/2-s}\left[\langle v\rangle^{\gamma/2+s}, M^{k}_{2\tilde s}\right]u_{\delta}, \langle v\rangle^{\gamma/2+s}M^{k}_{2\tilde s}u_{\delta}\right)_{L^{2}(\mathbb R^{6}_{x, v})},
    \end{split}
   \end{equation*}
   then by using \eqref{2-3-1} with $m=\gamma+2s$ and Cauchy-Schwarz inequality, we have
   \begin{equation*}
    \begin{split}
         \left|I_{5}\right|&\le2\left(\sum_{p=0}^{k-1}\left(C_{2}(t+1)\right)^{k-p}C_{k}^{p}\left\|\langle v\rangle^{\frac\gamma2+s}M^{p}_{2\tilde s}u_{\delta}(t)\right\|_{L^{2}(\mathbb R^{6}_{x, v})}\right)^{2}\\
         &\quad+\frac18\left\|\langle v\rangle^{\frac\gamma2+s}M^{k}_{2\tilde s}u_{\delta}(t)\right\|^{2}_{L^{2}(\mathbb R^{6}_{x, v})}.
         %&\le\sum_{p=0}^{k-1}\frac{2}{\left((k-p)!\right)^{2}}\sum_{p=0}^{k-1}\left(\frac{k!\left(C_{2}(t+1)\right)^{k-p}}{p!}\right)^{2}\left\|\langle v\rangle^{\frac{\gamma}{2}+s}M^{p}_{2\tilde s}u\right\|^{2}_{L^{2}(\mathbb R^{6}_{x, v})}\\
         %&\quad+\frac18\left\|\langle v\rangle^{\frac\gamma2+s}M^{k}_{2\tilde s}u\right\|^{2}_{L^{2}(\mathbb R^{6}_{x, v})}
   \end{split}
   \end{equation*} 
    Substituting $I_{1}-I_{5}$ into \eqref{dt}, we can obtain
    \begin{equation*}
    \begin{split}
        &\frac{d}{dt}\left\|M^{k}_{2\tilde s}u_{\delta}\right\|^{2}_{L^{2}(\mathbb R^{6}_{x, v})}+\left\|\langle v\rangle^{\frac\gamma2+s}M^{k}_{2\tilde s}u_{\delta}\right\|^{2}_{L^{2}(\mathbb R^{6}_{x, v})}+\left\|\langle D_{v}\rangle^{s}\left(\langle v\rangle^{\frac\gamma2}M^{k}_{2\tilde s}u_{\delta}\right)\right\|^{2}_{L^{2}(\mathbb R^{6}_{x, v})}\\
        &\le2\tilde C_{15}\left\|M^{k}_{2\tilde s}u_{\delta}\right\|^{2}_{L^{2}(\mathbb R^{6}_{x, v})}+4k^{2}\tilde C_{1}\left\|\langle D_{v}\rangle^{s}\left(\langle v\rangle^{\frac\gamma2}M^{k-1}_{2\tilde s}u_{\delta}\right)\right\|^{2}_{L^{2}(\mathbb R^{6}_{x, v})}\\
        &\quad+4\left\|M^{k}_{2\tilde s}f\right\|^{2}_{L^{2}(\mathbb R^{6}_{x, v})}+2\left(\sum_{p=0}^{k-1}\left(C_{2}(t+1)\right)^{k-p}C_{k}^{p}\left\|\langle v\rangle^{\frac\gamma2+s}M^{p}_{2\tilde s}u_{\delta}\right\|_{L^{2}(\mathbb R^{6}_{x, v})}\right)^{2}\\
         &\quad+2\left(\sum_{p=0}^{k-1}\left(\tilde C_{4}(t+1)\right)^{k-p}C_{k}^{p}\left\|\langle D_{v}\rangle^{s}\left(\langle v\rangle^{\frac\gamma2}M^{p}_{2\tilde s}u_{\delta}\right)\right\|_{L^{2}(\mathbb R^{6}_{x, v})}\right)^{2}.
    \end{split}
    \end{equation*}
    Integrating from~0~to $t$, one gets for all $0\le t\le T$,
    \begin{equation*}
    \begin{split}
        &\left\|M^{k}_{2\tilde s}u_{\delta}(t)\right\|^{2}_{L^{2}(\mathbb R^{6}_{x, v})}+\int_{0}^{t}\left\|\langle v\rangle^{\frac\gamma2+s}M^{k}_{2\tilde s}u_{\delta}(\tau)\right\|^{2}_{L^{2}(\mathbb R^{6}_{x, v})}d\tau\\
        &\quad+\int_{0}^{t}\left\|\langle D_{v}\rangle^{s}\left(\langle v\rangle^{\frac\gamma2}M^{k}_{2\tilde s}u_{\delta}(\tau)\right)\right\|^{2}_{L^{2}(\mathbb R^{6}_{x, v})}d\tau\\
        &\le2\tilde C_{15}\int_{0}^{t}\left\|M^{k}_{2\tilde s}u_{\delta}(\tau)\right\|^{2}_{L^{2}(\mathbb R^{6}_{x, v})}d\tau+4\int_{0}^{t}\left\|M^{k}_{2\tilde s}f(\tau)\right\|^{2}_{L^{2}(\mathbb R^{6}_{x, v})}d\tau\\
        &\quad+4k^{2}\tilde C_{1}\int_{0}^{t}\left\|\langle D_{v}\rangle^{s}\left(\langle v\rangle^{\frac\gamma2}M^{k-1}_{2\tilde s}u_{\delta}(\tau)\right)\right\|^{2}_{L^{2}(\mathbb R^{6}_{x, v})}d\tau\\
        &\quad+2\int_{0}^{t}\left(\sum_{p=0}^{k-1}\left(C_{2}(\tau+1)\right)^{k-p}C_{k}^{p}\left\|\langle v\rangle^{\frac\gamma2+s}M^{p}_{2\tilde s}u_{\delta}(\tau)\right\|_{L^{2}(\mathbb R^{6}_{x, v})}\right)^{2}d\tau\\
         &\quad+2\int_{0}^{t}\left(\sum_{p=0}^{k-1}\left(\tilde C_{4}(\tau+1)\right)^{k-p}C_{k}^{p}\left\|\langle D_{v}\rangle^{s}\left(\langle v\rangle^{\frac\gamma2}M^{p}_{2\tilde s}u_{\delta}(\tau)\right)\right\|_{L^{2}(\mathbb R^{6}_{x, v})}\right)^{2}d\tau,
    \end{split}
    \end{equation*}
    here we use the fact
    $$
    \left\|M^{k}_{2\tilde s}u_{\delta}(t)\right\|^{2}_{L^{2}(\mathbb R^{6}_{x, v})}\Big|_{t=0}=0,\ \ \forall\ k\ge 1.
    $$
    From Minkowski inequality and \eqref{m}, taking $B_{1}\ge\left(C_{2}(T+1)\right)^{2}+1$, it follows that for any $t\in[0, T]$
    \begin{equation*}
    \begin{split}
        &\int_{0}^{t}\left(\sum_{p=0}^{k-1}\left(C_{2}(\tau+1)\right)^{k-p}C_{k}^{p}\left\|\langle v\rangle^{\frac\gamma2+s}M^{p}_{2\tilde s}u_{\delta}(\tau)\right\|_{L^{2}(\mathbb R^{6}_{x, v})}\right)^{2}d\tau\\
        %&\le\left(\sum_{p=0}^{k-1}\left(C_{2}(t+1)\right)^{k-p}C_{k}^{p}\int_{0}^{t}\left\|\langle v\rangle^{\frac m2}M^{p}_{2\tilde s}u\right\|_{L^{2}(\mathbb R^{6}_{x, v})}d\tau\right)^{2}\\
        &\le \left(\sum_{p=0}^{k-1}\left(C_{2}(T+1)\right)^{k-p}C_{k}^{p}\left(\int_{0}^{t}\left\|\langle v\rangle^{\frac\gamma2+s}M^{p}_{2\tilde s}u_{\delta}(\tau)\right\|^{2}_{L^{2}(\mathbb R^{6}_{x, v})}d\tau\right)^{\frac12}\right)^{2}\\
        &\le\left(\sum_{p=0}^{k-1}\frac{\left(C_{2}(T+1)\right)^{k-p}k!}{(k-p)!}B_{1}^{p+1}+C_{2}(T+1)B_{1}^{k}k!\right)^{2}\\
        &\le\left((3+C_{2}(T+1))B_{1}^{k}k!\right)^{2}
    \end{split}
    \end{equation*}
    here we use the fact $\sum_{n=0}^{\infty}\frac{1}{n!}=e<3$.
    Similarly, taking $B_{1}\ge\left(\tilde C_{4}(T+1)\right)^{2}+1$, we can obtain that
    \begin{equation*}
    \begin{split}
         &\int_{0}^{t}\left(\sum_{p=0}^{k-1}\left(\tilde C_{4}(\tau+1)\right)^{k-p}C_{k}^{p}\left\|\langle D_{v}\rangle^{s}\left(\langle v\rangle^{\frac\gamma2}M^{p}_{2\tilde s}u_{\delta}(\tau)\right)\right\|_{L^{2}(\mathbb R^{6}_{x, v})}\right)^{2}d\tau\\
         &\le \left((3+\tilde C_{4}(T+1))B^{k}_{1}k!\right)^{2}.
    \end{split}
    \end{equation*}
    Combining these results, one gets for all $t\in[0, T]$
    \begin{equation}\label{Integration}
    \begin{split}
        &\left\|M^{k}_{2\tilde s}u_{\delta}(t)\right\|^{2}_{L^{2}(\mathbb R^{6}_{x, v})}+\int_{0}^{t}\left\|\langle v\rangle^{\gamma/2+s}M^{k}_{2\tilde s}u_{\delta}(\tau)\right\|^{2}_{L^{2}(\mathbb R^{6}_{x, v})}d\tau\\
        &\quad+\int_{0}^{t}\left\|\langle D_{v}\rangle^{s}\left(\langle v\rangle^{\gamma/2}M^{k}_{2\tilde s}u_{\delta}(\tau)\right)\right\|^{2}_{L^{2}(\mathbb R^{6}_{x, v})}d\tau\\
        &\le2\tilde C_{15}\int_{0}^{t}\left\|M^{k}_{2\tilde s}u_{\delta}(\tau)\right\|^{2}_{L^{2}(\mathbb R^{6}_{x, v})}d\tau+4\int_{0}^{t}\left\|M^{k}_{2\tilde s}f(\tau)\right\|^{2}_{L^{2}(\mathbb R^{6}_{x, v})}d\tau\\
        &\quad+4C_{16}(B^{k}_{1}k!)^{2},
    \end{split}
    \end{equation}
    here 
    $$C_{16}=\max\left\{\left(3+C_{2}(T+1)\right)^{2}+\tilde C_{1}, (3+\tilde C_{4}(T+1))^{2}+\tilde C_{1}\right\}.$$    
    Applying Gronwall inequality, one has for all $t\in[0, T]$
    \begin{equation*}
    \begin{split}
        &\left\|M^{k}_{2\tilde s}u_{\delta}(t)\right\|^{2}_{L^{2}(\mathbb R^{6}_{x, v})}\le8e^{2\tilde C_{15}T}\left(\int_{0}^{t}\left\|M^{k}_{2\tilde s}f(\tau)\right\|^{2}_{L^{2}(\mathbb R^{6}_{x, v})}d\tau+C_{16}(B^{k}_{1}k!)^{2}\right).
    \end{split}
    \end{equation*}
    Substituting these back into \eqref{Integration}, then using Lemma \ref{lemma 4.1}, one has for all $t\in[0, T]$
    \begin{equation*}
    \begin{split}
        &\left\|M^{k}_{2\tilde s}u_{\delta}(t)\right\|^{2}_{L^{2}(\mathbb R^{6}_{x, v})}+\int_{0}^{t}\left\|\langle v\rangle^{\gamma/2+s}M^{k}_{2\tilde s}u_{\delta}(\tau)\right\|^{2}_{L^{2}(\mathbb R^{6}_{x, v})}d\tau\\
        &\quad+\int_{0}^{t}\left\|\langle D_{v}\rangle^{s}\left(\langle v\rangle^{\gamma/2}M^{k}_{2\tilde s}u_{\delta}(\tau)\right)\right\|^{2}_{L^{2}(\mathbb R^{6}_{x, v})}d\tau\\
        %&\le(k!)^{2}\left(4\tilde C_{13}Te^{2\tilde C_{13}T}+1\right)\\
        &\le\left(16\tilde C_{15}Te^{2\tilde C_{15}T}+1\right)\left(\int_{0}^{t}\left\|M^{k}_{2\tilde s}f(\tau)\right\|^{2}_{L^{2}(\mathbb R^{6}_{x, v})}d\tau+C_{16}(B^{k}_{1}k!)^{2}\right)\\
        &\le\left(16\tilde C_{15}Te^{2\tilde C_{15}T}+1\right)\left(T((T+1)^{2k}A^{k+1}k!)^{2}+C_{16}(B^{k}_{1}k!)^{2}\right).
    \end{split}
    \end{equation*}
    Taking $B_{1}$ such that
    \begin{equation*}
    \begin{split}
         &B_{1}\ge\max\bigg\{B_{0}, A(T+1)^{2}, \sqrt{\left(A^{2}T+C_{16}\right)(16\tilde C_{15}Te^{2\tilde C_{15}T}+1)}\bigg\},
    \end{split}
    \end{equation*}
    it follows that for all $t\in[0, T]$ and $0<\delta<1$
    \begin{equation*}
    \begin{split}
        &\left\|M^{k}_{2\tilde s}u_{\delta}(t)\right\|^{2}_{L^{2}(\mathbb R^{6}_{x, v})}+\int_{0}^{t}\left\|\langle v\rangle^{\gamma/2+s}M^{k}_{2\tilde s}u_{\delta}(\tau)\right\|^{2}_{L^{2}(\mathbb R^{6}_{x, v})}d\tau\\
        &\quad+\int_{0}^{t}\left\|\langle D_{v}\rangle^{s}\left(\langle v\rangle^{\gamma/2}M^{k}_{2\tilde s}u_{\delta}(\tau)\right)\right\|^{2}_{L^{2}(\mathbb R^{6}_{x, v})}d\tau\le(B_{1}^{k+1}k!)^{2}.
    \end{split}
    \end{equation*}
    Since $B_1$ is independent of $0<\delta<1$, by the compactness and uniqueness of solution of \eqref{1-1}, we can obtain that for all $t\in[0, T]$
    \begin{equation*}
    \begin{split}
        &\left\|M^{k}_{2\tilde s}u(t)\right\|^{2}_{L^{2}(\mathbb R^{6}_{x, v})}+\int_{0}^{t}\left\|\langle v\rangle^{\gamma/2+s}M^{k}_{2\tilde s}u(\tau)\right\|^{2}_{L^{2}(\mathbb R^{6}_{x, v})}d\tau\\
        &\quad+\int_{0}^{t}\left\|\langle D_{v}\rangle^{s}\left(\langle v\rangle^{\gamma/2}M^{k}_{2\tilde s}u(\tau)\right)\right\|^{2}_{L^{2}(\mathbb R^{6}_{x, v})}d\tau\le(B_{1}^{k+1}k!)^{2}.
    \end{split}
    \end{equation*}
\end{proof}

Now, we consider the weighted inequality.
\begin{prop}\label{prop 5.1}
    For $0<s<1$, $\gamma+2s>0$ and $T>0$. Assume $u_0\in L^{2}(\mathbb R^6_{x, v})$ and $f\in C^{\infty}([0,T]; H^{\infty}(\mathbb R^{3}_{x}; \mathcal S(\mathbb R^{3}_{v})))$. Let $u$ be the solution of Cauchy problem \eqref{1-1}. Then there exists a constant $B_{2}>0, C_T$ such that for all $k\in\mathbb N$, $0\le t\le T$,
    \begin{equation}\label{5-1}
    \begin{split}
        &\left\|\Phi_{\frac\gamma2+s}^{k}u\right\|^{2}_{L^{2}(\mathbb R^{6}_{x, v})}+\int_{0}^{t}\left\|\langle v\rangle^{\frac\gamma2+s}\Phi_{\frac\gamma2+s}^{k}u\right\|^{2}_{L^{2}(\mathbb R^{6}_{x, v})}d\tau\\
         &\quad+\int_{0}^{t}\left\|\langle D_{v}\rangle^{s}\left(\langle v\rangle^{\frac\gamma2}\Phi_{\frac\gamma2+s}^{k}u\right)\right\|^{2}_{L^{2}(\mathbb R^{6}_{x, v})}d\tau\le\left(B_{2}^{k+1}k!\right)^{2}.
    \end{split}
    \end{equation}
\end{prop}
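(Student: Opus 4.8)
The plan is to follow the scheme of Proposition \ref{prop 4.1}, now applying the operator $\Phi^k_{\gamma/2+s}=t^k\langle v\rangle^{(\gamma/2+s)k}$ in place of $M^k_{2\tilde s}$. As in Proposition \ref{prop 4.1}, I first regularize: with $u_\delta=(u*\varphi_\delta)e^{-\delta|v|^2}$ the solution of \eqref{1-1B}, which belongs to $C^\infty([0,T];H^\infty(\mathbb R^3_x;\mathcal S(\mathbb R^3_v)))$ by Lemma \ref{lemma3.1}, all the manipulations below are legitimate, and it suffices to prove \eqref{5-1} for $u_\delta$ with $B_2$ independent of $0<\delta<1$ and of $k$; the general case then follows by compactness and uniqueness of the solution of \eqref{1-1}, exactly as at the end of Proposition \ref{prop 4.1}. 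Set $w_k=\Phi^k_{\gamma/2+s}u_\delta$. Since $\langle v\rangle^{\gamma}$, $\langle v\rangle^{\gamma+2s}$ and $v\cdot\nabla_x$ commute with the multiplication operator $\langle v\rangle^{(\gamma/2+s)k}$, and $\partial_t$ applied to the factor $t^k$ produces $k\langle v\rangle^{\gamma/2+s}w_{k-1}$, applying $\Phi^k_{\gamma/2+s}$ to \eqref{1-1B} gives
\begin{equation*}
(\partial_t+v\cdot\nabla_x)w_k+\langle v\rangle^{\gamma}\langle D_v\rangle^{2s}w_k+\langle v\rangle^{\gamma+2s}w_k
=\Phi^k_{\gamma/2+s}f+k\langle v\rangle^{\gamma/2+s}w_{k-1}-\langle v\rangle^{\gamma}\bigl[\Phi^k_{\gamma/2+s},\langle D_v\rangle^{2s}\bigr]u_\delta .
\end{equation*}

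Taking the $L^2(\mathbb R^6_{x,v})$ scalar product with $w_k$, the transport term drops out and, decomposing $(\langle v\rangle^\gamma\langle D_v\rangle^{2s}w_k,w_k)=\|\langle D_v\rangle^s(\langle v\rangle^{\gamma/2}w_k)\|^2+([\langle v\rangle^{\gamma/2},\langle D_v\rangle^{2s}]w_k,\langle v\rangle^{\gamma/2}w_k)$ as in Lemma \ref{lemma3.1}, I obtain
\begin{equation*}
\tfrac12\tfrac{d}{dt}\|w_k\|_{L^2}^2+\bigl\|\langle D_v\rangle^s(\langle v\rangle^{\gamma/2}w_k)\bigr\|_{L^2}^2+\bigl\|\langle v\rangle^{\gamma/2+s}w_k\bigr\|_{L^2}^2=I_1+I_2+I_3+I_4,
\end{equation*}
with $I_1=-([\langle v\rangle^{\gamma/2},\langle D_v\rangle^{2s}]w_k,\langle v\rangle^{\gamma/2}w_k)$, $I_2=(\Phi^k_{\gamma/2+s}f,w_k)$, $I_3=k(\langle v\rangle^{\gamma/2+s}w_{k-1},w_k)$, and $I_4=-(\langle v\rangle^\gamma[\Phi^k_{\gamma/2+s},\langle D_v\rangle^{2s}]u_\delta,w_k)$. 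The terms $I_1,I_2,I_3$ are routine: $I_1$ is bounded by $\tfrac12\|\langle D_v\rangle^s(\langle v\rangle^{\gamma/2}w_k)\|^2+\tilde C\|w_k\|^2$ by \eqref{I-2} applied with $g=w_k$, $\alpha=0$, $r=0$; for $I_2$, Cauchy--Schwarz together with the bound $\|\Phi^k_{\gamma/2+s}f(t)\|_{L^2}\le T^k\tilde A^{k+1}k!$ from Lemma \ref{lemma 4.1} gives $|I_2|\le 2(T^k\tilde A^{k+1}k!)^2+\tfrac18\|w_k\|^2$; and for $I_3$, since $\gamma/2+s>0$ we have $\|w_k\|\le\|\langle v\rangle^{\gamma/2+s}w_k\|$, so Cauchy--Schwarz gives $|I_3|\le 2k^2\|\langle v\rangle^{\gamma/2+s}w_{k-1}\|^2+\tfrac18\|\langle v\rangle^{\gamma/2+s}w_k\|^2$, and after time-integration $k^2\int_0^t\|\langle v\rangle^{\gamma/2+s}w_{k-1}\|^2\,d\tau$ is controlled by $k^2(B_2^k(k-1)!)^2=(B_2^kk!)^2$ via the induction hypothesis \eqref{5-1} at level $k-1$.

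The only real difficulty is the commutator $I_4$, which is handled just as $I_4$ in Proposition \ref{prop 4.1}. Writing $[\Phi^k_{\gamma/2+s},\langle D_v\rangle^{2s}]=t^k[\langle v\rangle^{(\gamma/2+s)k},\langle D_v\rangle^{2s}]$ and telescoping,
$$[\langle v\rangle^{(\gamma/2+s)k},\langle D_v\rangle^{2s}]=\sum_{j=0}^{k-1}\langle v\rangle^{(\gamma/2+s)(k-1-j)}\bigl[\langle v\rangle^{\gamma/2+s},\langle D_v\rangle^{2s}\bigr]\langle v\rangle^{(\gamma/2+s)j},$$
one reduces to $k$ copies of the elementary commutator $[\langle v\rangle^{\gamma/2+s},\langle D_v\rangle^{2s}]$, whose norm is controlled by Lemma \ref{lemma2.2} (with $r=2s$) by a constant depending only on $\gamma,s$ and \emph{independent of $k$}; this is the crucial point, since it means the $k$ iterations contribute only a geometric factor rather than a super-geometric one. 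Re-expressing the intermediate weighted factors via $\langle v\rangle^{(\gamma/2+s)p}u_\delta=t^{-p}w_p$ and performing the same binomial bookkeeping as in Lemmas \ref{commutator M} and \ref{A-k} (and, when $1/2<s<1$, invoking the interpolation Lemmas \ref{interpolation}--\ref{interpolation1} to absorb the gain $\langle D_v\rangle^{2s-1}$ into $\|\langle D_v\rangle^s(\langle v\rangle^{\gamma/2}w_k)\|$), one arrives, after pairing with $w_k$ and using Cauchy--Schwarz, at
$$|I_4|\le 2\Bigl(\sum_{p=0}^{k-1}\bigl(C(T+1)\bigr)^{k-p}C_k^p\bigl\|\langle D_v\rangle^s(\langle v\rangle^{\gamma/2}w_p)\bigr\|_{L^2}\Bigr)^2+\tfrac18\bigl\|\langle D_v\rangle^s(\langle v\rangle^{\gamma/2}w_k)\bigr\|_{L^2}^2+C\|w_k\|_{L^2}^2.$$

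Putting all four estimates together, integrating from $0$ to $t$, and using $w_k|_{t=0}=0$ for $k\ge1$, I obtain for $0\le t\le T$
\begin{equation*}
\|w_k(t)\|_{L^2}^2+\int_0^t\!\|\langle v\rangle^{\gamma/2+s}w_k\|_{L^2}^2\,d\tau+\int_0^t\!\|\langle D_v\rangle^s(\langle v\rangle^{\gamma/2}w_k)\|_{L^2}^2\,d\tau\le C\!\int_0^t\!\|w_k\|_{L^2}^2\,d\tau+C\bigl(B_2^{k+1}k!\bigr)^2,
\end{equation*}
where the $(B_2^{k+1}k!)^2$ collects the $\Phi^k f$ contribution (Lemma \ref{lemma 4.1}), the $I_3$ contribution, and the convolution sum from $I_4$, each bounded by the induction hypothesis via Minkowski's inequality exactly as in Proposition \ref{prop 4.1} (using $\sum_{n\ge0}1/n!<3$ and taking $B_2$ suitably large). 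Gronwall's inequality applied to $\|w_k(t)\|_{L^2}^2$ then closes the induction on $k$, once $B_2$ is taken larger than an explicit constant built from $C$, $T$, $\tilde A$ and the $k=0$ bound, the base case $k=0$ being precisely \eqref{u-0}. The main obstacle, as indicated above, is exactly the derivation of the $I_4$ estimate with a $k$-uniform constant in the elementary commutator (which forces the use of the telescoping identity rather than a direct pseudodifferential expansion of $[\langle v\rangle^{(\gamma/2+s)k},\langle D_v\rangle^{2s}]$); everything else transcribes the argument of Proposition \ref{prop 4.1}.
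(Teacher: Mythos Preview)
Your proposal is essentially the same strategy as the paper's proof: regularize via $u_\delta$, derive the energy identity for $w_k=\Phi^k_{\gamma/2+s}u_\delta$, estimate the source and $k\langle v\rangle^{\gamma/2+s}w_{k-1}$ terms by Cauchy--Schwarz and the induction hypothesis, absorb the low-order commutator via \eqref{I-2} and interpolation, and close with Gronwall. The base case, the use of Lemma \ref{lemma 4.1} for $f$, and the passage to the limit in $\delta$ are all exactly as in the paper.

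The one point where your write-up is misleading is the treatment of $I_4$. The telescoping identity
\[
[\langle v\rangle^{\mu k},\langle D_v\rangle^{2s}]=\sum_{j=0}^{k-1}\langle v\rangle^{\mu(k-1-j)}\,[\langle v\rangle^{\mu},\langle D_v\rangle^{2s}]\,\langle v\rangle^{\mu j},\qquad \mu=\tfrac{\gamma}{2}+s,
\]
is of course correct, but it does \emph{not} by itself reduce the estimate to ``$k$ copies of the elementary commutator with a $k$-independent constant'': each summand carries the $k$-dependent outer weight $\langle v\rangle^{\mu(k-1-j)}$, and pushing that weight through (or onto the other factor in the pairing with $w_k$) either produces $w_p$ with $p\ge k$ or forces a weighted commutator bound whose constant depends on the exponent $\mu(k-1-j)$. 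The binomial structure $\sum_p C_k^p(\cdot)^{k-p}$ you write down does not come from this telescoping; it comes from the inductive decomposition used in Lemma \ref{commutator M} (here with the roles of weight and Fourier multiplier swapped), which is precisely what the paper does: set $A=\langle D_v\rangle^{-2s}[\langle D_v\rangle^{2s},\Phi_{\mu}]$, observe that $\Phi^{k-1}A\Phi^{-k}$ is of order $-1$ with $k$-independent seminorms, and iterate to obtain
\[
\bigl\|\langle v\rangle^{\gamma/2}[\langle D_v\rangle^{2s},\Phi^k]u_\delta\bigr\|_{L^2}\le\sum_{j=1}^{k}(C t)^{k-j+1}C_k^{\,j}\,\bigl\|\langle v\rangle^{\gamma/2}\langle D_v\rangle^{2s-1}\Phi^{j}u_\delta\bigr\|_{L^2},
\]
with the $j=k$ term absorbed by interpolation. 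Since you explicitly invoke ``the same binomial bookkeeping as in Lemmas \ref{commutator M} and \ref{A-k}'' this is not a gap in substance, but you should drop the telescoping as the organizing device and state the inductive decomposition directly; otherwise the claim that the elementary-commutator constant is ``independent of $k$'' looks like it is doing work that it cannot do.
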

\begin{proof}
     We first show that
    \begin{equation}\label{5-1-2}
    \begin{split}
        &\left\|\Phi_{\frac\gamma2+s}^{k}u_{\delta}\right\|^{2}_{L^{2}(\mathbb R^{6}_{x, v})}+\int_{0}^{t}\left\|\langle v\rangle^{\frac\gamma2+s}\Phi_{\frac\gamma2+s}^{k}u_{\delta}\right\|^{2}_{L^{2}(\mathbb R^{6}_{x, v})}d\tau\\
         &\quad+\int_{0}^{t}\left\|\langle D_{v}\rangle^{s}\left(\langle v\rangle^{\frac\gamma2}\Phi_{\frac\gamma2+s}^{k}u_{\delta}\right)\right\|^{2}_{L^{2}(\mathbb R^{6}_{x, v})}d\tau\le\left(B_{2}^{k+1}k!\right)^{2},
    \end{split}
    \end{equation}
    with $u_{\delta}$ defined in Proposition \ref{prop 4.1}, and $B_{2}$ independent of $\delta, k$. 
    
    Since $u_{\delta} $ satisfies \eqref{1-1}, it follows that
    \begin{equation*}
    \begin{split}
         &\left(\partial_{t}+v\cdot\nabla_{x}\right)\Phi_{\frac\gamma2+s}^{k}u_{\delta}+\langle v\rangle^{\gamma+2s}\Phi_{\frac\gamma2+s}^{k}u_{\delta}+\langle v\rangle^{\gamma}\langle D_{v}\rangle^{2s}\Phi_{\frac\gamma2+s}^{k}u_{\delta}\\
         &=k\langle v\rangle^{\frac\gamma2+s}\Phi_{\frac\gamma2+s}^{k-1}u_{\delta}+\left[\langle v\rangle^{\gamma}\langle D_{v}\rangle^{2s}, \Phi_{\frac\gamma2+s}^{k}\right]u_{\delta}+\Phi_{\frac\gamma2+s}^{k}f,
    \end{split}
    \end{equation*}
    then taking the scalar product with respect to $\Phi_{\frac\gamma2+s}^{k}u_{\delta}$ in $L^{2}(\mathbb R^{6}_{x, v})$, and by using Cauchy-Schwarz inequality, one has
    \begin{equation}\label{5-1-1}
    \begin{split}
         &\frac12\frac{d}{dt}\left\|\Phi_{\frac\gamma2+s}^{k}u_{\delta}\right\|^{2}_{L^{2}}+\left\|\langle v\rangle^{\frac\gamma2+s}\Phi_{\frac\gamma2+s}^{k}u_{\delta}\right\|^{2}_{L^{2}}+\left(\langle v\rangle^{\gamma}\langle D_{v}\rangle^{2s}\Phi_{\frac\gamma2+s}^{k}u_{\delta}, \Phi_{\frac\gamma2+s}^{k}u_{\delta}\right)\\
         &\le k\left\|\langle v\rangle^{\frac\gamma2+s}\Phi_{\frac\gamma2+s}^{k-1}u_{\delta}\right\|_{L^{2}}\left\|\Phi_{\frac\gamma2+s}^{k}u_{\delta}\right\|_{L^{2}}+\left\|\Phi_{\frac\gamma2+s}^{k}f\right\|_{L^{2}}\left\|\Phi_{\frac\gamma2+s}^{k}u_{\delta}\right\|_{L^{2}}\\
         &\quad+\left(\left[\langle v\rangle^{\gamma}\langle D_{v}\rangle^{2s}, \Phi_{\frac\gamma2+s}^{k}\right]u_{\delta}, \Phi_{\frac\gamma2+s}^{k}u_{\delta}\right)=R_{1}+R_{2}+R_{3}.
    \end{split}
    \end{equation}

    We show \eqref{5-1-2} holds by induction on the index $k$. For $k=0$, it is enough to take in \eqref{u-0}. Assume $k\ge1$ and \eqref{5-1-2} is true for $0\le m\le k-1$,
    \begin{equation}\label{m1}
    \begin{split}
        &\left\|\Phi_{\frac\gamma2+s}^{m}u_{\delta}\right\|^{2}_{L^{2}(\mathbb R^{6}_{x, v})}+\int_{0}^{t}\left\|\langle v\rangle^{\frac\gamma2+s}\Phi_{\frac\gamma2+s}^{m}u_{\delta}\right\|^{2}_{L^{2}(\mathbb R^{6}_{x, v})}d\tau\\
         &\quad+\int_{0}^{t}\left\|\langle D_{v}\rangle^{s}\left(\langle v\rangle^{\frac\gamma2}\Phi_{\frac\gamma2+s}^{m}u_{\delta}\right)\right\|^{2}_{L^{2}(\mathbb R^{6}_{x, v})}d\tau\le\left(B_{2}^{m+1}m!\right)^2.
    \end{split}
    \end{equation}

    Now, we prove that \eqref{5-1-2} holds true for $m=k$. For $R_{1}$ and $R_{2}$, by using Cauchy-Schwarz inequality, it follows that
    \begin{equation*}
    \begin{split}
        |R_{1}|\le\frac14\left\|\Phi_{\frac\gamma2+s}^{k}u_{\delta}\right\|^{2}_{L^{2}(\mathbb R^{6}_{x, v})}+k^{2}\left\|\langle v\rangle^{\frac\gamma2+s}\Phi_{\frac\gamma2+s}^{k-1}u_{\delta}\right\|^{2}_{L^{2}(\mathbb R^{6}_{x, v})},
    \end{split}
    \end{equation*}
    and
    \begin{equation*}
    \begin{split}
        |R_{2}|\le\frac14\left\|\Phi_{\frac\gamma2+s}^{k}u_{\delta}\right\|^{2}_{L^{2}(\mathbb R^{6}_{x, v})}+\left\|\Phi_{\frac\gamma2+s}^{k}f\right\|^{2}_{L^{2}(\mathbb R^{6}_{x, v})}.
    \end{split}
    \end{equation*}
    For the term $R_{3}$, we can rewrite it as
    \begin{equation*}
    \begin{split}
        &R_{3}=\left(\langle v\rangle^{\frac\gamma2}\left[\langle D_{v}\rangle^{2s}, \Phi_{\frac\gamma2+s}^{k}\right]u_{\delta}, \langle v\rangle^{\frac\gamma2}\Phi_{\frac\gamma2+s}^{k}u_{\delta}\right).
        %&=\left(\langle v\rangle^{\frac\gamma2}\left[\langle D_{v}\rangle^{2s}, \Phi_{\frac\gamma2+s}^{k-1}\right]\Phi_{\frac\gamma2+s}u, \langle v\rangle^{\frac\gamma2}\Phi_{\frac\gamma2+s}^{k}u\right)+\left(\langle v\rangle^{\frac\gamma2}\Phi_{\frac\gamma2+s}^{k-1}\left[\langle D_{v}\rangle^{2s}, \Phi_{\frac\gamma2+s}\right]u, \langle v\rangle^{\frac\gamma2}\Phi_{\frac\gamma2+s}^{k}u\right)
    \end{split}
    \end{equation*}
    Noting that
    \begin{equation*}
    \begin{split}
        &\langle v\rangle^{\frac\gamma2}\left[\langle D_{v}\rangle^{2s}, \Phi_{\frac\gamma2+s}^{k}\right]u_{\delta}
        %&=\langle v\rangle^{\frac\gamma2}\left[\langle D_{v}\rangle^{2s}, \Phi_{\frac\gamma2+s}^{k-1}\right]\Phi_{\frac\gamma2+s}u+\langle v\rangle^{\frac\gamma2}\langle D_{v}\rangle^{2s}\Phi_{\frac\gamma2+s}^{k-1}\langle D_{v}\rangle^{-2s}\left[\langle D_{v}\rangle^{2s}, \Phi_{\frac\gamma2+s}\right]u\\
        %&\quad+\langle v\rangle^{\frac\gamma2}\left[\Phi_{\frac\gamma2+s}^{k-1}, \langle D_{v}\rangle^{2s}\right]\langle D_{v}\rangle^{-2s}\left[\langle D_{v}\rangle^{2s}, \Phi_{\frac\gamma2+s}\right]u\\
        =\langle v\rangle^{\frac\gamma2}\langle D_{v}\rangle^{2s}\Phi_{\frac\gamma2+s}^{k-1}A(v, D_{v})\Phi_{\frac\gamma2+s}^{-k}\Phi_{\frac\gamma2+s}^{k}u_{\delta}\\
        &\quad+\langle v\rangle^{\frac\gamma2}\left[\Phi_{\frac\gamma2+s}^{k-1}, \langle D_{v}\rangle^{2s}\right]A(v, D_{v})u_{\delta}+\langle v\rangle^{\frac\gamma2}\left[\langle D_{v}\rangle^{2s}, \Phi_{\frac\gamma2+s}^{k-1}\right]\Phi_{\frac\gamma2+s}u_{\delta},
    \end{split}
    \end{equation*}
    here $A(v, D_{v})=\langle D_{v}\rangle^{-2s}[\langle D_{v}\rangle^{2s}, \Phi_{\frac\gamma2+s}]$, as the arguments in Lemma \ref{lemma2.2}, we have $\Phi_{\frac\gamma2+s}^{k-1}A(v, D_{v})\Phi_{\frac\gamma2+s}^{-k}$ is a pseudo-differential operator of order $-1$,
    %$$\left|A(v, \xi)\right|\le C_{\gamma, s}t\langle v\rangle^{\frac\gamma2+s}\langle\xi\rangle^{-1}=C_{\gamma, s}\Phi_{\frac\gamma2+s}\langle\xi\rangle^{-1},$$
    hence from Lemma 2.2 of~\cite{H-4}, it follows that
    $$\left\|\langle v\rangle^{\frac\gamma2}\langle D_{v}\rangle^{2s}\Phi_{\frac\gamma2+s}^{k-1}A(v, D_{v})\Phi_{\frac\gamma2+s}^{-(k-1)}\Phi_{\frac\gamma2+s}^{k-1}u_{\delta}\right\|_{L^{2}}\le C_{\gamma, s}\left\|\langle v\rangle^{\frac\gamma2}\langle D_{v}\rangle^{2s-1}\Phi_{\frac\gamma2+s}^{k}u_{\delta}\right\|_{L^{2}}.$$
    Therefore as the arguments in Lemma \ref{commutator M}, by induction on $k$, we have
     \begin{equation*}
     \begin{split}
          &\left\|\langle v\rangle^{\frac\gamma2}\left[\langle D_{v}\rangle^{2s}, \Phi_{\frac\gamma2+s}^{k}\right]u_{\delta}\right\|_{L^{2}(\mathbb R^{6}_{x, v})}\\
          &\le\sum_{j=1}^{k}\left(C_{17}t\right)^{k-(j-1)}C_{k}^{j}\left\|\langle v\rangle^{\frac\gamma2}\langle D_{v}\rangle^{2s-1}\Phi_{\frac\gamma2+s}^{j}u_{\delta}\right\|_{L^{2}(\mathbb R^{6}_{x, v})}.
     \end{split}
     \end{equation*}
    Therefore, using the Cauchy-Schwarz inequality and Lemma 2.2 of~\cite{H-4}, it follows that for $0<s\le1/2$
    \begin{equation*}
     \begin{split}
          \left|R_{3}\right|%&\le\left\|\left[\langle D_{v}\rangle^{2s}, \langle v\rangle^{\frac\gamma2}\Phi_{\frac\gamma2+s}^{k}\right]u\right\|_{L^{2}(\mathbb R^{6}_{x, v})}\left\|\langle v\rangle^{\frac\gamma2}\Phi_{\frac\gamma2+s}^{k}u\right\|_{L^{2}(\mathbb R^{6}_{x, v})}\\
          &\le\tilde C_{17}\sum_{j=1}^{k}\left(C_{17}t\right)^{k-(j-1)}C_{k}^{j}\left\|\langle v\rangle^{\frac\gamma2}\Phi_{\frac\gamma2+s}^{j}u_{\delta}\right\|_{L^{2}(\mathbb R^{6}_{x, v})}\left\|\langle v\rangle^{\frac\gamma2}\Phi_{\frac\gamma2+s}^{k}u_{\delta}\right\|_{L^{2}(\mathbb R^{6}_{x, v})}\\
          &\le\left(\tilde C_{17}\sum_{j=1}^{k-1}\left(C_{17}t\right)^{k-(j-1)}C_{k}^{j}\left\|\langle D_{v}\rangle^{s}\left(\langle v\rangle^{\frac\gamma2}\Phi_{\frac\gamma2+s}^{j}u_{\delta}\right)\right\|_{L^{2}(\mathbb R^{6}_{x, v})}\right)^{2}\\
          &\quad+(C_{17}\tilde C_{17}+1)(t+1)\left\|\langle v\rangle^{\frac\gamma2}\Phi_{\frac\gamma2+s}^{k}u_{\delta}\right\|^{2}_{L^{2}(\mathbb R^{6}_{x, v})},
     \end{split}
    \end{equation*}
    and for $1/2<s<1$, we have $2s-1<2s-s=s$, then
    \begin{equation*}
     \begin{split}
          \left|R_{3}\right|%&\le\left\|\left[\langle D_{v}\rangle^{2s}, \langle v\rangle^{\frac\gamma2}\Phi_{\frac\gamma2+s}^{k}\right]u\right\|_{L^{2}(\mathbb R^{6}_{x, v})}\left\|\langle v\rangle^{\frac\gamma2}\Phi_{\frac\gamma2+s}^{k}u\right\|_{L^{2}(\mathbb R^{6}_{x, v})}\\
          &\le\left(\tilde C_{17}\sum_{j=1}^{k-1}\left(C_{17}t\right)^{k-(j-1)}C_{k}^{j}\left\|\langle D_{v}\rangle^{s}\left(\langle v\rangle^{\frac\gamma2}\Phi_{\frac\gamma2+s}^{j}u_{\delta}\right)\right\|_{L^{2}(\mathbb R^{6}_{x, v})}\right)^{2}\\
          &\quad+(C_{17}\tilde C_{17}+1)(t+1)\left\|\langle D_{v}\rangle^{2s-1}\left(\langle v\rangle^{\frac\gamma2}\Phi_{\frac\gamma2+s}^{k}u_{\delta}\right)\right\|^{2}_{L^{2}(\mathbb R^{6}_{x, v})},
     \end{split}
    \end{equation*}
    with $C_{17}$ and $\tilde C_{17}$ depend on $\gamma$, $s$. For both two cases, by using Lemma \ref{interpolation} for $\gamma>0$ and by using Lemma \ref{interpolation1} for $-2s<\gamma\le0$, we obtain that for all $0<s<1$
     \begin{equation*}
     \begin{split}
          |R_{3}|&\le\frac14\left\|\langle D_{v}\rangle^{s}\left(\langle v\rangle^{\frac\gamma2}\Phi_{\frac\gamma2+s}^{k}u_{\delta}\right)\right\|^{2}_{L^{2}(\mathbb R^{6}_{x, v})}+C_{18}(t+1)\left\|\Phi_{\frac\gamma2+s}^{k}u_{\delta}\right\|^{2}_{L^{2}(\mathbb R^{6}_{x, v})}\\
          &\quad+\left(\tilde C_{17}\sum_{j=1}^{k-1}\left(C_{17}t\right)^{k-(j-1)}C_{k}^{j}\left\|\langle D_{v}\rangle^{s}\left(\langle v\rangle^{\frac\gamma2}\Phi_{\frac\gamma2+s}^{j}u_{\delta}\right)\right\|_{L^{2}(\mathbb R^{6}_{x, v})}\right)^{2},
    \end{split}
    \end{equation*}
     with $C_{18}$ depends on $C_{17}$ and $\tilde C_{17}$.

     It remains to consider the third term on the left-hand side of \eqref{5-1-1}. From Cauchy-Schwarz inequality and Lemma 2.2 of~\cite{H-4},
     \begin{equation*}
     \begin{split}
          &\left(\langle v\rangle^{\gamma}\langle D_{v}\rangle^{2s}\Phi_{\frac\gamma2+s}^{k}u_{\delta}, \Phi_{\frac\gamma2+s}^{k}u_{\delta}\right)=\left(\langle D_{v}\rangle^{-s}\langle v\rangle^{\frac\gamma2}\langle D_{v}\rangle^{2s}\Phi_{\frac\gamma2+s}^{k}u_{\delta}, \langle D_{v}\rangle^{s}\langle v\rangle^{\frac\gamma2}\Phi_{\frac\gamma2+s}^{k}u_{\delta}\right)\\
          &=\left\|\langle D_{v}\rangle^{s}(\langle v\rangle^{\frac\gamma2}\Phi_{\frac\gamma2+s}^{k}u_{\delta})\right\|^{2}_{L^{2}}+\left(\langle D_{v}\rangle^{-s}[\langle v\rangle^{\frac\gamma2}, \langle D_{v}\rangle^{2s}]\Phi_{\frac\gamma2+s}^{k}u_{\delta}, \langle D_{v}\rangle^{s}\langle v\rangle^{\frac\gamma2}\Phi_{\frac\gamma2+s}^{k}u_{\delta}\right),
     \end{split}
    \end{equation*}
    as the arguments in Lemma \ref{lemma2.2}, there exists $C_{19}$ depends on $\gamma$, $s$ such that
    $$\left\|\langle D_{v}\rangle^{-s}[\langle v\rangle^{\frac\gamma2}, \langle D_{v}\rangle^{2s}]\Phi_{\frac\gamma2+s}^{k}u_{\delta}\right\|_{L^{2}(\mathbb R^{6}_{x, v})}\le C_{19}\left\|\langle v\rangle^{\frac\gamma2}\Phi_{\frac\gamma2+s}^{k}u_{\delta}\right\|_{L^{2}(\mathbb R^{6}_{x, v})},$$
    hence from Cauchy-Schwarz inequality, we have
    \begin{equation*}
     \begin{split}
          &\left|\left(\langle D_{v}\rangle^{-s}[\langle v\rangle^{\frac\gamma2}, \langle D_{v}\rangle^{2s}]\Phi_{\frac\gamma2+s}^{k}u_{\delta}, \langle D_{v}\rangle^{s}\langle v\rangle^{\frac\gamma2}\Phi_{\frac\gamma2+s}^{k}u_{\delta}\right)\right|\\
          &\le C_{19}\left\|\langle v\rangle^{\frac\gamma2}\Phi_{\frac\gamma2+s}^{k}u_{\delta}\right\|_{L^{2}(\mathbb R^{6}_{x, v})}\left\|\langle D_{v}\rangle^{s}\left(\langle v\rangle^{\frac\gamma2}\Phi_{\frac\gamma2+s}^{k}u_{\delta}\right)\right\|_{L^{2}(\mathbb R^{6}_{x, v})}\\
          &\le2(C_{19})^{2}\left\|\langle v\rangle^{\frac\gamma2}\Phi_{\frac\gamma2+s}^{k}u_{\delta}\right\|^{2}_{L^{2}(\mathbb R^{6}_{x, v})}+\frac18\left\|\langle D_{v}\rangle^{s}\left(\langle v\rangle^{\frac\gamma2}\Phi_{\frac\gamma2+s}^{k}u_{\delta}\right)\right\|^{2}_{L^{2}(\mathbb R^{6}_{x, v})},
     \end{split}
    \end{equation*}
    for $\gamma>0$, using Lemma \ref{interpolation}, and for $-2s<\gamma\le0$, using Lemma \ref{interpolation1}, one has
    \begin{equation*}
    \begin{split}
        &2(C_{19})^{2}\left\|\langle v\rangle^{\frac\gamma2}\Phi_{\frac\gamma2+s}^{k}u_{\delta}\right\|^{2}_{L^{2}}\le\frac18\left\|\langle D_{v}\rangle^{s}\left(\langle v\rangle^{\frac\gamma2}\Phi_{\frac\gamma2+s}^{k}u_{\delta}\right)\right\|^{2}_{L^{2}}+\tilde C_{19}\left\|\Phi_{\frac\gamma2+s}^{k}u_{\delta}\right\|^{2}_{L^{2}}.
    \end{split}
    \end{equation*}
    Plugging these results back into \eqref{5-1-1} and using the fact $\gamma+2s>0$, one gets
     \begin{equation*}
     \begin{split}
         &\frac{d}{dt}\left\|\Phi_{\frac\gamma2+s}^{k}u_{\delta}(t)\right\|^{2}_{L^{2}(\mathbb R^{6}_{x, v})}+\left\|\langle v\rangle^{\frac\gamma2+s}\Phi_{\frac\gamma2+s}^{k}u_{\delta}(t)\right\|^{2}_{L^{2}(\mathbb R^{6}_{x, v})}\\
         &\quad+\left\|\langle D_{v}\rangle^{s}\left(\Phi_{\frac\gamma2+s}^{k}\langle v\rangle^{\frac\gamma2}u_{\delta}(t)\right)\right\|^{2}_{L^{2}(\mathbb R^{6}_{x, v})}\\
         &\le2k^{2}\left\|\langle v\rangle^{\frac\gamma2+s}\Phi_{\frac\gamma2+s}^{k-1}u_{\delta}(t)\right\|^{2}_{L^{2}(\mathbb R^{6}_{x, v})}+2\left\|\Phi_{\frac\gamma2+s}^{k}f(t)\right\|^{2}_{L^{2}(\mathbb R^{6}_{x, v})}\\
         &\quad+2(C_{18}+\tilde C_{19})(t+1)\left\|\Phi_{\frac\gamma2+s}^{k}u_{\delta}(t)\right\|^{2}_{L^{2}(\mathbb R^{6}_{x, v})}\\
         &\quad+2\left(\tilde C_{17}\sum_{j=1}^{k-1}\left(C_{17}t\right)^{k-(j-1)}C_{k}^{j}\left\|\langle D_{v}\rangle^{s}\left(\langle v\rangle^{\frac\gamma2}\Phi_{\frac\gamma2+s}^{j}u_{\delta}(t)\right)\right\|_{L^{2}(\mathbb R^{6}_{x, v})}\right)^{2}.
    \end{split}
    \end{equation*}
    Integrating from 0 to $t$ and using \eqref{m1}, we obtain
     \begin{equation*}
     \begin{split}
         &\left\|\Phi_{\frac\gamma2+s}^{k}u_{\delta}(t)\right\|^{2}_{L^{2}(\mathbb R^{6}_{x, v})}+\int_{0}^{t}\left\|\langle v\rangle^{\frac\gamma2+s}\Phi_{\frac\gamma2+s}^{k}u_{\delta}(\tau)\right\|^{2}_{L^{2}(\mathbb R^{6}_{x, v})}d\tau\\
         &\quad+\int_{0}^{t}\left\|\langle D_{v}\rangle^{s}\left(\Phi_{\frac\gamma2+s}^{k}\langle v\rangle^{\frac\gamma2}u_{\delta}(\tau)\right)\right\|^{2}_{L^{2}(\mathbb R^{6}_{x, v})}d\tau\\
         &\le2\left(B_{2}^{k}k!\right)^{2}+2\int_{0}^{t}\left\|\Phi_{\frac\gamma2+s}^{k}
         f(\tau)\right\|^{2}_{L^{2}(\mathbb R^{6}_{x, v})}d\tau\\
         &\quad+2(C_{18}+\tilde C_{19})(t+1)\int_{0}^{t}\left\|\Phi_{\frac\gamma2+s}^{k}u_{\delta}(\tau)\right\|^{2}_{L^{2}(\mathbb R^{6}_{x, v})}d\tau\\
         &\quad+2\int_{0}^{t}\left(\tilde C_{17}\sum_{j=1}^{k-1}\left(C_{17}\tau\right)^{k-(j-1)}C_{k}^{j}\left\|\langle D_{v}\rangle^{s}\left(\langle v\rangle^{\frac\gamma2}\Phi_{\frac\gamma2+s}^{j}u_{\delta}(\tau)\right)\right\|_{L^{2}(\mathbb R^{6}_{x, v})}\right)^{2}d\tau.
    \end{split}
    \end{equation*}
    From the Minkowski inequality and \eqref{m1}, taking $B_{2}\ge(C_{16}T)^{3}+1$, it follows that for any $t\in[0, T]$%H${\rm\ddot o}$lder inequality,
    \begin{equation*}
     \begin{split}
          &\int_{0}^{t}\left(\tilde C_{17}\sum_{j=1}^{k-1}\left(C_{17}\tau\right)^{k-(j-1)}C_{k}^{j}\left\|\langle D_{v}\rangle^{s}\left(\langle v\rangle^{\frac\gamma2}\Phi_{\frac\gamma2+s}^{j}u_{\delta}(\tau)\right)\right\|_{L^{2}(\mathbb R^{6}_{x, v})}\right)^{2}d\tau\\
          &\le\left(\tilde C_{17}\sum_{j=1}^{k-1}\left(C_{17}T\right)^{k-(j-1)}C_{k}^{j}\left(\int_{0}^{t}\left\|\langle D_{v}\rangle^{s}\left(\langle v\rangle^{\frac\gamma2}\Phi_{\frac\gamma2+s}^{j}u_{\delta}(\tau)\right)\right\|^{2}_{L^{2}(\mathbb R^{6}_{x, v})}d\tau\right)^{\frac12}\right)^{2}\\
          &\le\left(\tilde C_{17}\sum_{j=1}^{k-3}\frac{k!\left(C_{17}T\right)^{k-(j-1)}B_{2}^{j+1}}{(k-j)!}+\tilde C_{17}(C_{17}T)^{3}B_{2}^{k-1}k!+\tilde C_{17}(C_{17}T)^{2}B_{2}^{k}k!\right)^{2}\\
          &\le\left(\tilde C_{17}(4+(C_{17}T)^{2})B_{2}^{k}k!\right)^{2}.
    \end{split}
    \end{equation*}
    Combining these inequalities, we can get for all $0\le t\le T$,
     \begin{equation*}
     \begin{split}
         &\left\|\Phi_{\frac\gamma2+s}^{k}u_{\delta}(t)\right\|^{2}_{L^{2}(\mathbb R^{6}_{x, v})}+\int_{0}^{t}\left\|\langle v\rangle^{\frac\gamma2+s}\Phi_{\frac\gamma2+s}^{k}u_{\delta}\right\|^{2}_{L^{2}(\mathbb R^{6}_{x, v})}d\tau\\
         &\quad+\int_{0}^{t}\left\|\langle D_{v}\rangle^{s}\left(\Phi_{\frac\gamma2+s}^{k}\langle v\rangle^{\frac\gamma2}u_{\delta}\right)\right\|^{2}_{L^{2}(\mathbb R^{6}_{x, v})}d\tau\\
         %&\le2\left(B_{2}^{k}k!\right)^{2}+2\int_{0}^{t}\left\|\Phi_{\frac\gamma2+s}^{k}f(\tau)\right\|^{2}_{L^{2}(\mathbb R^{6}_{x, v})}d\tau\\
         %&\quad+2(C_{18}+\tilde C_{19})(t+1)\int_{0}^{t}\left\|\Phi_{\frac\gamma2+s}^{k}u_{\delta}\right\|^{2}_{L^{2}(\mathbb R^{6}_{x, v})}d\tau+\left(\tilde C_{17}(4+(C_{17}T)^{2})B_{2}^{k}k!\right)^{2}\\
         &\le C_{20}\left(B_{2}^{k}k!\right)^{2}+2\int_{0}^{t}\left\|\Phi_{\frac\gamma2+s}^{k}f(\tau)\right\|^{2}_{L^{2}(\mathbb R^{6}_{x, v})}d\tau+\tilde C_{20}\int_{0}^{t}\left\|\Phi_{\frac\gamma2+s}^{k}u_{\delta}\right\|^{2}_{L^{2}(\mathbb R^{6}_{x, v})}d\tau,
    \end{split}
    \end{equation*}
    here $C_{20}=2+\left(\tilde C_{17}(4+(C_{17}T)^{2})\right)^{2}$ and $\tilde C_{20}=2(C_{18}+\tilde C_{19})(T+1)$. Then by applying Gronwall inequality, one has for all $t\in[0, T]$
    \begin{equation*}
    \begin{split}
         &\left\|\Phi_{\gamma/2+s}^{k}u_{\delta}\right\|^{2}_{L^{2}(\mathbb R^{6}_{x, v})}\le2e^{2\tilde C_{20}T}\left(C_{20}\left(B_{2}^{k}k!\right)^{2}+2\int_{0}^{t}\left\|\Phi_{\frac\gamma2+s}^{k}f(\tau)\right\|^{2}_{L^{2}(\mathbb R^{6}_{x, v})}d\tau\right),
    \end{split}
    \end{equation*}
    plugging it back into the above inequality, and using Lemma \ref{lemma 4.1}, one has
    \begin{equation*}
     \begin{split}
         &\left\|\Phi_{\frac\gamma2+s}^{k}u_{\delta}(t)\right\|^{2}_{L^{2}(\mathbb R^{6}_{x, v})}+\int_{0}^{t}\left\|\langle v\rangle^{\frac\gamma2+s}\Phi_{\frac\gamma2+s}^{k}u_{\delta}\right\|^{2}_{L^{2}(\mathbb R^{6}_{x, v})}d\tau\\
         &\quad+\int_{0}^{t}\left\|\langle D_{v}\rangle^{s}\left(\Phi_{\frac\gamma2+s}^{k}\langle v\rangle^{\frac\gamma2}u_{\delta}\right)\right\|^{2}_{L^{2}(\mathbb R^{6}_{x, v})}d\tau\\
         &\le\left(2T\tilde C_{20}e^{2\tilde C_{20}T}+1\right)\left(C_{20}\left(B_{2}^{k}k!\right)^{2}+2\int_{0}^{t}\left\|\Phi_{\frac\gamma2+s}^{k}f(\tau)\right\|^{2}_{L^{2}(\mathbb R^{6}_{x, v})}d\tau\right)\\
         &\le\left(2T\tilde C_{20}e^{2\tilde C_{20}T}+1\right)\left(C_{20}\left(B_{2}^{k}k!\right)^{2}+2T(T^{k}\tilde A^{k+1}k!)^{2}\right).
    \end{split}
    \end{equation*}
    Then taking $B_{2}$ such that
    \begin{equation*}
    \begin{split}
         &B_{2}\ge\max\bigg\{B_{0}, \tilde AT, \sqrt{\left(2\tilde C_{20}Te^{2\tilde C_{20}T}+1\right)\left(2\tilde A^{2}T+C_{20}\right)}\bigg\},
    \end{split}
    \end{equation*}
    it follows that for all $t\in[0, T]$ and $0<\delta<1$
    \begin{equation*}
     \begin{split}
         &\left\|\Phi_{\gamma/2+s}^{k}u_{\delta}\right\|^{2}_{L^{2}(\mathbb R^{6}_{x, v})}+\int_{0}^{t}\left\|\langle v\rangle^{\gamma/2+s}\Phi_{\gamma/2+s}^{k}u_{\delta}\right\|^{2}_{L^{2}(\mathbb R^{6}_{x, v})}d\tau\\
         &\quad+\int_{0}^{t}\left\|\langle D_{v}\rangle^{s}\left(\Phi_{\gamma/2+s}^{k}\langle v\rangle^{\gamma/2}u_{\delta}\right)\right\|^{2}_{L^{2}(\mathbb R^{6}_{x, v})}d\tau\le\left(B_{2}^{k+1}k!\right)^{2}.
    \end{split}
    \end{equation*}
    Since $B_2$ is independent of $0<\delta<1$, by the compactness and uniqueness of solution of \eqref{1-1}, we can obtain that for all $t\in[0, T]$
    \begin{equation*}
     \begin{split}
         &\left\|\Phi_{\gamma/2+s}^{k}u\right\|^{2}_{L^{2}(\mathbb R^{6}_{x, v})}+\int_{0}^{t}\left\|\langle v\rangle^{\gamma/2+s}\Phi_{\gamma/2+s}^{k}u\right\|^{2}_{L^{2}(\mathbb R^{6}_{x, v})}d\tau\\
         &\quad+\int_{0}^{t}\left\|\langle D_{v}\rangle^{s}\left(\Phi_{\gamma/2+s}^{k}\langle v\rangle^{\gamma/2}u\right)\right\|^{2}_{L^{2}(\mathbb R^{6}_{x, v})}d\tau\le\left(B_{2}^{k+1}k!\right)^{2}.
    \end{split}
    \end{equation*}
\end{proof}

\bigskip
\noindent
{\bf Proof of Theorem \ref{thm1} }

%\vskip 1cm
\bigskip

From \eqref{4-1}, we have
        $$\left\|M^{k}_{2\tilde s}u(t)\right\|_{L^{2}(\mathbb R^{6}_{x, v})}\le B_{1}^{k+1}k!, \quad \forall t\in[0, T].$$
        Using the Plancherel theorem and Lemma \ref{lemma  2.3}, it follows that
        \begin{equation*}
        \begin{split}
             \left\|M^{k}_{2\tilde s}u(t)\right\|^{2}_{L^{2}(\mathbb R^{6}_{x, v})}&=\int_{\mathbb R^{6}_{x, v}}\left|M^{k}_{2\tilde s}(t, \eta, \xi)\hat u(t, \eta, \xi)\right|^{2}\frac{d\eta}{(2\pi)^{3}}\frac{d\xi}{(2\pi)^{3}}\\
             &\ge\left(c_{s}t\right)^{2k}\int_{\mathbb R^{6}_{x, v}}\left|\left(1+|\xi|^{2}+t^{2}|\eta|^{2}\right)^{\tilde sk}\hat u(t, \eta, \xi)\right|^{2}\frac{d\eta}{(2\pi)^{3}}\frac{d\xi}{(2\pi)^{3}}.
        \end{split}
        \end{equation*}
        Hence, for any $t>0$, we have
        \begin{equation*}
        \begin{split}
             &\int_{\mathbb R^{6}_{x, v}}\left|\left(1+|\xi|^{2}+t^{2}|\eta|^{2}\right)^{\tilde sk}\hat u(t, \eta, \xi)\right|^{2}\frac{d\eta}{(2\pi)^{3}}\frac{d\xi}{(2\pi)^{3}}\le \left(\frac{1}{t^{k}}\left(\frac{B_{1}}{c_{s}}\right)^{k+1}k!\right)^{2},
        \end{split}
        \end{equation*}
        by the Plancherel theorem, it follows that  for all $t\in]0, T]$
        \begin{equation*}
        \begin{split}
             &\left\|(1-\Delta_{v}-t^{2}\Delta_{x})^{\tilde sk}u\right\|_{L^{2}(\mathbb R^{6}_{x, v})}\le\frac{1}{t^{k}}\left(\frac{B_{1}}{c_{s}}\right)^{k+1}k!, \quad\forall k\in\mathbb N.
        \end{split}
        \end{equation*}
        From \eqref{5-1}, we can get that for all $t\in]0, T]$
        $$\left\|\langle v\rangle^{(\gamma/2+s)k}u\right\|_{L^{2}(\mathbb R^{6}_{x, v})}\le\frac{B_{2}^{k+1}}{t^{k}}k!, \quad\forall k\in\mathbb N.$$
        And therefore, combing these results and taking $C=\frac{B_{1}}{c_{s}}+B_{2}$, it follows that for all $t\in]0, T]$
        $$
        \left\|(1-\Delta_{v}-t^{2}\Delta_{x})^{\tilde sk}u\right\|_{L^{2}(\mathbb R^{6}_{x, v})}+\left\|\langle v\rangle^{(\gamma/2+s)k}u\right\|_{L^{2}(\mathbb R^{6}_{x, v})}\le\frac{ C^{k+1}}{t^{k}}k!, \quad\forall k\in\mathbb N.
        $$

\bigskip
\noindent {\bf Acknowledgements.}
This work was supported by the NSFC (No.12031006) and the Fundamental
Research Funds for the Central Universities of China.

\end{document}